\documentclass[11pt]{article}
\textwidth=15cm
\setlength{\textheight}{21.5cm}
\setlength{\oddsidemargin}{.5cm}
\setlength{\evensidemargin}{-.5cm}
\setlength{\topmargin}{-.5cm}
\setlength{\abovedisplayskip}{3mm}
\setlength{\belowdisplayskip}{3mm}
\setlength{\abovedisplayshortskip}{3mm}
\setlength{\belowdisplayshortskip}{3mm}
\raggedbottom
\parskip=1.5mm
\pagestyle{myheadings} \markboth{}{Edgeworth expansion for Euler approximation of SDEs}

\usepackage{amsmath}
\usepackage{amsfonts}
\usepackage{graphicx}
\usepackage{latexsym}
\usepackage{url}
\usepackage{color} 
\usepackage{dsfont}
\usepackage{amssymb}
\usepackage{amsthm}
\usepackage{comment}
\usepackage{enumerate}
\newif\ifcol
\coltrue 
\ifcol
\newcommand{\colorr}{\color[rgb]{0,0,0}}

\newcommand{\colory}{\color{yellow}}


%
\else
\newcommand{\colr}{\color[rgb]{0.8,0,0}}
\newcommand{\colorr}{\color{black}}

\newcommand{\colory}{\color{black}}

\fi

\excludecomment{en-text}
\includecomment{jp-text}
\excludecomment{comment}
\def\bd{\begin{description}}
\def\ed{\end{description}}
\def\im{\item}
\def\beas{\begin{eqnarray*}}
\def\eeas{\end{eqnarray*}}
\def\bi{\begin{itemize}}
\def\ei{\end{itemize}}
\def\sfa{{\sf a}}
\def\sfc{{\sf c}}
\def\sfd{{\sf d}}
\def\sfq{d}

\def\dote{\stackrel{\circ}{e}}
\def\tti{{\tt i}}

\def\cale{{\cal E}}
\def\calf{{\cal F}}

\def\calm{{\cal M}}
\def\caln{{\cal N}}

\def\cals{{\cal S}}

\def\calv{{\cal V}}

\def\bX{\overline{X}}
\def\bV{\overline{V}}
\def\bx{\overline{x}}
\def\bSigma{\overline{\Sigma}}
\def\yeq{\>=\>}

\newtheorem{remark}{Remark}

\newcommand{\complexi}{{\tt i}}
\newcommand{\iu}{{\tt i}u}
\newcommand{\iv}{{\tt i}v}
\newcommand{\koko}{{\colory koko}}

\newcommand{\R}{\mathbb{R}}

\newcommand{\E}{\mathbb{E}}
\newcommand{\PP}{\mathbb{P}}
\newcommand{\N}{\mathbb{N}}


\newcommand{\bbD}{{\mathbb D}}
\newcommand{\bbE}{{\mathbb E}}
\newcommand{\bbF}{{\mathbb F}}
\newcommand{\bbG}{{\mathbb G}}
\newcommand{\bbH}{{\mathbb H}}
\newcommand{\bbI}{{\mathbb I}}

\newcommand{\bbL}{{\mathbb L}}

\newcommand{\bbN}{{\mathbb N}}

\newcommand{\bbP}{{\mathbb P}}

\newcommand{\bbR}{{\mathbb R}}

\newcommand{\bbX}{{\mathbb X}}

\newcommand{\bbZ}{{\mathbb Z}}

\newcommand{\si}{\sigma}

\newcommand{\ep}{\varepsilon}

\def\nn{\nonumber}

\newcommand{\toop}{\stackrel{\PP}{\longrightarrow}}
\newcommand{\ucp}{\stackrel{u.c.p.}{\longrightarrow}}
\newcommand{\schw}{\stackrel{d}{\longrightarrow}}

\newcommand{\stab}{\stackrel{d_{st}}{\longrightarrow}}

\newcommand{\bee}{\begin{equation}}
\newcommand{\eee}{\end{equation}}
\newcommand{\bea}{\begin{eqnarray}}
\newcommand{\eea}{\end{eqnarray}}
\newcommand{\bean}{\begin{eqnarray*}}
\newcommand{\eean}{\end{eqnarray*}}
\def\half{\frac{1}{2}}

\renewcommand{\theequation}{\arabic{section}.\arabic{equation}}

\newtheorem{prop}{Proposition}[section]
\newtheorem{cor}[prop]{Corollary}

\newtheorem{lem}[prop]{Lemma}

\newtheorem{theo}[prop]{Theorem}

\def\edgeworth{Edgeworth }

\begin{document}

\title{Edgeworth expansion for Euler approximation of continuous diffusion processes}
\author{Mark Podolskij\thanks{Department of Mathematics, Aarhus University,  Ny Munkegade 118, 8000 Aarhus,
Denmark, Email: mpodolskij@math.au.dk.} 
\and Bezirgen Veliyev\thanks{CREATES, Department of Economics and Business Economics, Aarhus University,  Fuglesangs Alle 4, 8210 Aarhus V,
Denmark, Email: bveliyev@econ.au.dk}
\and Nakahiro Yoshida\thanks{Graduate School of Mathematical
Science, 3-8-1 Komaba, Meguro-ku, Tokyo 153, Japan, Email: nakahiro@ms.u-tokyo.ac.jp}}

\maketitle

\begin{abstract}
In this paper we present the Edgeworth expansion for the 
Euler approximation scheme of a continuous diffusion process driven by a Brownian motion.
Our methodology is based upon a recent work \cite{Yoshida2013}, which establishes 
Edgeworth expansions associated with asymptotic mixed normality using elements of Malliavin 
calculus. Potential applications of our theoretical results include higher order expansions 
for weak and strong approximation errors associated to the Euler scheme, and for studentized version
of the error process.

\ \

{\it Keywords}: \
diffusion processes, Edgeworth expansion, Euler scheme, limit theorems.\bigskip

{\it AMS 2000 subject classifications.} 60F05, ~60H10, ~65C30.

\end{abstract}




\section{Introduction} \label{sec1}
\setcounter{equation}{0}
\renewcommand{\theequation}{\thesection.\arabic{equation}}
In this work we consider a one-dimensional continuous stochastic process  
$(X_t)_{t \in [0,1]}$ that 
satisfies the stochastic differential equation 
\begin{align} \label{sde}
dX_t=a(X_t) dt+ b(X_t) dW_t \qquad \text{with} \qquad X_0=x_0,
\end{align} 
where $(W_t)_{t \in [0,1]}$ is a Brownian motion, defined on a filtered probability space $\left(\Omega, \mathcal F, (\mathcal F_t)_{t \in [0,1]}, \mathbb P \right)$. A simple and effective numerical scheme for 
the solution of \eqref{sde} is the Euler approximation scheme, which is given as follows. Let  
$\varphi_n: \R_+ \to \R_+$ be the function defined by $\varphi_n(t)=i/n$ when $t \in [i/n, (i+1)/n)$. 
The continuous Euler approximation scheme is described by  
\begin{align} \label{Eulersde}
dX_t^n=a\left(X_{\varphi_n (t)}^n \right) dt+ 
b\left(X_{\varphi_n (t)}^n \right)dW_t \qquad \text{with} \qquad X_0^n=x_0.
\end{align} 
The probabilistic properties of the Euler approximation scheme have been investigated in numerous 
papers. We refer to the classical work \cite{BT1,BT2, JP,KP, KuPr, MP} among many others. 
Asymptotic results in the framework of non-regular coefficients can be found in e.g. \cite{A, CS,HK,Yan}.

In this paper we are aiming to derive an Edgeworth expansion for the error process 
\begin{align} \label{error}
U^n = X^n - X.
\end{align}
Let us recall the classical convergence result for $(U^n_t)_{t \in [0,1]}$ from \cite{JP}.  
\begin{theo} \label{th1} \cite[Theorem 1.2]{JP}
Assume that the functions $a,b$ are globally Lipschitz and $a,b \in C^1(\R)$. Then we obtain the stable
convergence
\begin{align} \label{convergence}
V^n := \sqrt{n} U^n \stab V \qquad \text{on } C([0,1]) 
\end{align}
equipped with the uniform topology, where $V=(V_t)_{t\in [0,1]}$ is the unique solution of the stochastic differential equation 
\begin{align} \label{limitsde}
dV_t = a'(X_t) V_t dt + b'(X_t) V_t dW_t - \frac{1}{\sqrt{2}} bb'(X_t) dB_t \qquad \text{with} \qquad
V_0=0,
\end{align}
and $(B_t)_{t \in [0,1]}$ is a new Brownian motion defined on an extension of the probability space
$\left(\Omega, \mathcal F, (\mathcal F_t)_{t \in [0,1]}, \mathbb P \right)$ and independent of 
the $\sigma$-field $\mathcal F$.
\end{theo}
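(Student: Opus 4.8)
The plan is to derive the equation driving $V^n=\sqrt n\,U^n$, linearize the coefficient increments, isolate the single martingale term responsible for the limit, prove its stable convergence, and then transfer this to $V^n$ through the continuity of the linear solution map. Subtracting \eqref{sde} from \eqref{Eulersde} gives
\begin{align*}
U^n_t = \int_0^t \big[a(X^n_{\varphi_n(s)})-a(X_s)\big]\,ds + \int_0^t \big[b(X^n_{\varphi_n(s)})-b(X_s)\big]\,dW_s.
\end{align*}
For each coefficient I would split $f(X^n_{\varphi_n(s)})-f(X_s)=\big[f(X^n_{\varphi_n(s)})-f(X_{\varphi_n(s)})\big]+\big[f(X_{\varphi_n(s)})-f(X_s)\big]$ for $f\in\{a,b\}$. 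The first bracket is a feedback term: by the mean value theorem and the $C^1$ assumption it equals $f'(\xi^n_s)\,U^n_{\varphi_n(s)}$, so after multiplication by $\sqrt n$ it contributes $\int_0^t a'(X_s)V^n_{\varphi_n(s)}\,ds+\int_0^t b'(X_s)V^n_{\varphi_n(s)}\,dW_s$ up to asymptotically negligible remainders (using $\xi^n_s\to X_s$ and continuity of $a',b'$). The second bracket is the discretization error of the true diffusion; inserting the SDE for $X$ and keeping the leading Itô term shows $f(X_{\varphi_n(s)})-f(X_s)\approx -f'(X_s)\,b(X_{\varphi_n(s)})\,(W_s-W_{\varphi_n(s)})$.

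Substituting this, the drift contribution of the discretization error is $O(n^{-1/2})$ and vanishes, while the diffusion contribution produces the crucial martingale
\begin{align*}
N^n_t = -\sqrt n\int_0^t b'(X_s)\,b(X_{\varphi_n(s)})\,(W_s-W_{\varphi_n(s)})\,dW_s.
\end{align*}
Here I expect the main work. I would apply a stable central limit theorem for continuous martingales after computing the two predictable brackets. For the quadratic variation, $\E[(W_s-W_{\varphi_n(s)})^2\mid\mathcal F_{\varphi_n(s)}]=s-\varphi_n(s)$ together with $\int_{i/n}^{(i+1)/n}(s-i/n)\,ds=\tfrac{1}{2n^2}$ yields
\begin{align*}
\langle N^n,N^n\rangle_t \toop \tfrac12\int_0^t \big(bb'(X_s)\big)^2\,ds,
\end{align*}
which is where the constant $\tfrac{1}{\sqrt2}$ originates. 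For the covariation with the driving Brownian motion, $\langle N^n,W\rangle_t=-\sqrt n\int_0^t b'(X_s)b(X_{\varphi_n(s)})(W_s-W_{\varphi_n(s)})\,ds\toop 0$, since the mean-zero increment $W_s-W_{\varphi_n(s)}$ averages out (an $L^2$ estimate gives variance $O(n^{-1})$) even after the $\sqrt n$ scaling. This asymptotic orthogonality to $W$ is exactly what forces the limit to be carried by a \emph{new} Brownian motion $B$ on an extension of the space, independent of $\mathcal F$; together with a Lindeberg-type negligibility check it gives the stable convergence $N^n\stab -\frac{1}{\sqrt2}\int_0^\cdot bb'(X_s)\,dB_s$, jointly with $W$.

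Assembling the pieces, $V^n$ solves, up to uniformly negligible remainders, the linear equation
\begin{align*}
V^n_t = \int_0^t a'(X_s)V^n_{\varphi_n(s)}\,ds + \int_0^t b'(X_s)V^n_{\varphi_n(s)}\,dW_s + N^n_t,
\end{align*}
whose limit \eqref{limitsde} has the identical linear structure driven by $W$ and $B$. I would conclude by invoking the continuity of the solution map of a linear SDE with respect to its driving semimartingale noise: because the equation is linear in $V$, a Gronwall estimate controls $V^n-V$ in terms of the already-established convergence of the pair $(W,N^n)$, upgrading it to stable convergence in $C([0,1])$ under the uniform topology. The principal obstacle is the martingale step, namely verifying the hypotheses of the stable CLT — in particular the negligibility of the feedback and higher-order Taylor remainders, which requires moment bounds on $X$, $X^n$ and on $U^n$ that follow from the global Lipschitz assumption, and pinning down the two covariation limits that simultaneously produce the factor $\tfrac{1}{\sqrt2}$ and the independence of $B$ from $\mathcal F$.
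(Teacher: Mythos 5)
Your proposal is correct and follows essentially the same route the paper takes for this result (which it cites from \cite{JP} but effectively reproves via Theorem \ref{th2n} and Proposition \ref{firstclt}): Taylor/mean-value decomposition of the error equation, isolation of the quadratic martingale $-\sqrt{n}\int_0^t bb'(X^n_{\varphi_n(s)})(W_s-W_{\varphi_n(s)})\,dW_s$, the bracket computations $\langle N^n\rangle_t\toop\tfrac12\int_0^t(bb'(X_s))^2ds$ and $\langle N^n,W\rangle\toop 0$ feeding the stable martingale CLT, and transfer to $V^n$ through the affine structure. The only step to phrase with care is the last one: a literal Gronwall bound on $V^n-V$ does not parse since $V$ lives on an extension of the probability space, and the paper instead writes $V^n$ explicitly by the variation-of-constants formula of Lemma \ref{lem1} as $\Sigma^n_t$ times a stochastic integral and concludes from the joint stable convergence of the drivers — which is exactly the ``continuity of the linear solution map'' you invoke, made concrete.
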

We will see later that the limiting process $V$ is an $\mathcal F$-conditional Gaussian martingale 
with $\mathcal F$-conditional zero mean. In particular, for each $t>0$, $V_t$ has a mixed normal distribution.  The aim of this work is to derive an Edgeworth expansion associated with Theorem \ref{th1}.  More specifically, for any regular $q$-dimensional random variable $F$ and any given
times $0< T_1 <\ldots <T_k \leq 1$, we would like to determine the function $p_n: \R^k
\times \R^q \to \R$ such 
that it holds 
\begin{align} \label{2Edgeworth}
\sup_{f \in \mathcal C_{q,k}} \left| \E[f(V^n_{T_1}, \ldots, V^n_{T_k},F)] 
- \int_{\R^k \times \R^q} f(z,x) p_n(z,x) dzdx \right| = o(1/\sqrt{n})
\end{align} 
for a large class of functions $ \mathcal C_{q,k}$. The methodology is based upon the work of 
Yoshida \cite{Yoshida2013}, which applies Malliavin calculus and stable convergence to obtain the Edgeworth
expansion associated with mixed normal limits. Another key ingredient in the derivation of 
\eqref{2Edgeworth} is the stochastic expansion of the error process $U^n$ and a non-degeneracy condition, which turns out to 
be rather complex in the case $k>1$. Related articles 
include \cite{DalalyanYoshida2011, PY, PVY}, which have studied Edgeworth expansions associated 
to covariance estimators, power variations and the pre-averaging estimator.

The paper is structured as follows. Section  \ref{sec2.1} presents various definitions and notation.  
Sections \ref{sec2.2} and \ref{sec2.3} are devoted to derivation of Edgeworth expansion for multivariate weighted quadratic functionals, which plays a crucial role in the asymptotic analysis of the Euler scheme. In Section \ref{sec3} we investigate the second order stochastic
expansion of the standardised error process associated with the Euler approximation scheme. 
The Edgeworth expansion for the error process is investigated in Section \ref{sec5}. Section \ref{sec6}
is devoted to several applications of our theoretical results, including asymptotic expansion of the weak and strong approximation errors, and density expansion for the studentized version of the error process. Some proofs are presented in Section
\ref{sec7}.

\section{Background} \label{sec2}
\setcounter{equation}{0}
\renewcommand{\theequation}{\thesection.\arabic{equation}}

\subsection{Definitions and notation} \label{sec2.1}
In this subsection we introduce basic notation,  some elements of Malliavin calculus and the definition of stable convergence in law.   

All vectors $x\in \R^k$ are understood as column vectors; $\|x\|$ stands for Euclidean norm of $x$ and
$x^{\star}$ denotes the transpose of $x$. 
For $x\in \R^k$ and $m \in \mathbb{Z}_+^k$ we set $x^m:=\prod_{j=1}^k x_j^{m_j}$
and $|m|=\sum_{j=1}^k m_j$. For any function
$f: \R\rightarrow \R$ we denote by $f^{(l)}$ its $l$th derivative; for a function $f:\R^k 
\times \R^q \rightarrow \R$ and 
$\alpha =(\alpha_1,\alpha _2)\in \mathbb{Z}_+^{k} \times 
\mathbb{Z}_+^{q}$ the operator $d^\alpha$ is defined via 
$d^\alpha= d^{\alpha_1}_{x_1} d^{\alpha_2}_{x_2}$. The set $C_p^l(\R^k)$ (resp. $C_b^l(\R^k)$)
denotes the space of  $l$ times continuously differentiable functions $f: \R^k \rightarrow \R$ such that all derivatives up to order
$l$ have polynomial growth (resp. are bounded).  
For a matrix $A \in \R^{k \times k}$ and a vector
$x\in \R^k$ we write $A[x^{\otimes 2}]$ to denote the quadratic form $x^{\star} Ax$; similarly,
for $x,y \in \R^k$ we write
$y[x]$ for the linear form $y^{\star}x$. 
Finally, $\complexi:=\sqrt{-1}$.

We now introduce some notions of Malliavin calculus (we refer to the books of Ikeda and Watanabe \cite{IkedaWatanabe1989} 
and Nualart \cite{N} for a detailed exposition of Malliavin calculus). 
The set $\mathbb L^p$ denotes the space of random variables with finite $p$th moment and 
we use the notation $\mathbb{L}_{\infty-}=\cap_{p>1} \mathbb{L}^p$; the corresponding 
$\mathbb L^p$-norms are denoted
by $\|\cdot\|_{\mathbb L^p}$.
Define $\mathbb H=\mathbb L^2([0,1], dx)$ and let 
$\langle \cdot, \cdot \rangle_{\mathbb H}$ denote the usual scalar product on $\mathbb H$. We denote by $D^l$ 
the $l$th Malliavin derivative operator and by $\delta^l$ its unbounded adjoint (also called Skrokhod integral of order $l$). 
The space $\mathbb D_{l,p}$ is the completion of the set of smooth random variables with respect to the norm
\[
\|Y \|_{l,p} := \left(\E[|Y|^p] + \sum_{m=1}^l \E[\|D^m Y\|_{\mathbb H^{\otimes m}}^p]\right)^{1/p}.
\]  
For any smooth $k$-dimensional random variable $Y$ the Malliavin matrix 
is defined via $\si_Y:=(\langle DY_i, DY_j \rangle_{\mathbb H})_{1\leq i,j\leq k}$. We  write $\Delta_Y:=\text{det }\si_Y$
for the determinant of the Malliavin matrix.
Finally, we set $\mathbb D_{l,\infty}= \cap_{p\geq 2} \mathbb D_{l,p}$. We sometimes use the notation
$\mathbb D_{l,p}(\R^k)$ to denote the space of all $k$-dimensional random variable $Y$ such 
that $Y_i \in \mathbb D_{l,p}$.

We use the notation $Y_n \stab Y$  to denote the stable convergence
in law. We recall that a sequence of random variables $(Y_n)_{n\in \N}$ 
defined on $(\Omega, \mathcal F, \mathbb P)$ with values in a metric space $E$ is said 
to converge stably with limit $Y$, written $Y_n \stab Y$, where $Y$ is defined on an extension
$(\overline \Omega, \overline{\mathcal F}, 
\overline{\mathbb P})$ of the original probability space $(\Omega, \mathcal F, \mathbb P)$, 
iff for any bounded, continuous function $g$ and any bounded $\mathcal{F}$-measurable random variable $Z$ it holds that
\begin{equation} \label{defstable}
\E[ g(Y_n) Z] \rightarrow \overline{\E}[ g(Y) Z], \quad n \rightarrow \infty.
\end{equation}  
The notion of stable convergence is due to  Renyi \cite{REN}. We also refer to \cite{AE} for properties
of this mode of convergence.

Finally, for two vector fields $V_0$ and $V_1$ we denote by
$\text{Lie}[V_0;V_1]$  the Lie algebra generated by $V_1$ and $V_0$. 
That is, $\text{Lie}[V_0;V_1]=\text{span}\left(\bigcup_{j=0}^\infty\Sigma_j\right)$, where 
 $\Sigma_0=\{V_1\}$ and $\Sigma_j=\{[V,V_i];\> V\in\Sigma_{j-1},\>i=0,1\}$ ($j\geq1$) 
 with the Lie bracket $[\cdot,\cdot]$. $\text{Lie}[V_0;V_1](x)$ stands for 
 $\text{Lie}[V_0;V_1]$ evaluated at $x$.

\subsection{Edgeworth expansion associated with mixed normal limits: The quadratic
case} \label{sec2.2}

In this subsection we will study the (second order) Edgeworth expansion associated with certain quadratic functionals of Brownian motion that will be crucial for the treatment of the error process $V^n$. Indeed, we will see later that the dominating martingale term in the expansion of $V^n$ has a quadratic form. 
 The results are similar in spirit to   \cite[Theorem 4]{Yoshida2013}, but we will require quite different non-degeneracy arguments.

On a filtered Wiener space $(\Omega, \mathcal F, (\mathcal F_t)_{t\in [0,1]},
\mathbb P)$ we consider a $k$-dimensional random functional $Z_n$, which admits the decomposition
\begin{equation} \label{vndec}
Z_n=M_n + n^{-1/2} N_n, 
\end{equation}
where $M_n$ and $N_n$ are tight sequences of random variables.  
We assume that $M_n$, which will have a quadratic form, converges stably in law to a mixed normal variable $M$: 
\begin{equation} \label{mnstab}
M_n \stab M,
\end{equation}
where the random variable $M$ is defined on an extension $(\overline \Omega, \overline{\mathcal F}, 
\overline{\mathbb P})$ of the original probability space $(\Omega, \mathcal F, \mathbb P)$ and, conditionally on $\mathcal F$,
$M$ has a normal law with mean $0$ and conditional covariance matrix $C\in \R^{k \times k}$. In this case we use the notation
\begin{equation*}
M\sim MN(0, C).
\end{equation*}
For concrete applications it is often useful to consider the Edgeworth expansion for the pair
$(Z_n, F_n)$, where $F_n$ is another $q$-dimensional random functional satisfying the convergence
in probability
\begin{equation*}
F_n \toop F.
\end{equation*}
Obviously, such a framework is important when the statistic at hand does not only depend on the sequence $Z_n$, but also on an external random variable $F$ (in this case we may set $F_n=F$). 
In the statistical context the most useful application is the case where $F_n \toop C$. In this situation 
we obtain by properties of stable convergence that   
\begin{equation*}
F_n^{-1/2} Z_n \schw \mathcal N_k(0,\text{id}_k)
\end{equation*}
when $F_n \in \R^{k\times k}$ is positive definite and $\text{id}_k$ denotes the identity matrix. Thus, the asymptotic expansion of the law of $(Z_n, F_n)$ would imply the Edgeworth expansion for the studentized 
statistic $F_n^{-1/2} Z_n$.

In the next step we embed the previous static framework into a martingale setting. 
We assume that the leading term $M_n$ is a terminal 
value of some continuous $(\mathcal F_t)$-martingale $(M_t^n)_{t\in [0,1]}$, that is $M_n=M_1^n$.
We also consider  stochastic processes $(M_t)_{t\in [0,1]}$ and $(C^n_t)_{t\in [0,1]}$ with
values in $\R^{k}$ and $\R^{k \times k}$ respectively, such that
\begin{equation}
M=M_1, \qquad C_t = \langle M \rangle_t, \qquad C_t^n = \langle M^n \rangle_t, \qquad C_n= \langle M^n \rangle_1.
\end{equation}
Here the process $(M_t)_{t\in [0,1]}$, defined on extended 
probability space $(\overline \Omega, \overline{\mathcal F}, 
\overline{\mathbb P})$, represents the stable limit of the continuous $(\mathcal F_t)$-martingale $(M_t^n)_{t\in [0,1]}$,
while $C^n$ denotes the quadratic covariation process associated with $M^n$. 

Now, we shall introduce a particular type of quadratic functionals. For a sequence of time points 
$(T_j)_{1\leq j\leq k}$ not depending on $n$  
with $0<T_1<\ldots<T_k$,  
we consider a sequence of partitions $\pi^n=(t_i)_{1\leq i \leq m_n}$ of $[0,1]$ 
such that $0=t_0<t_1<\ldots<t_{m_n}$ and that $\{T_j\}_{1\leq j\leq k} \subset\{t_i\}_{1\leq i \leq m_n}$ for every $n\in\bbN$. Here $t_j$ may depend on $n$ though we omit $n$ for notational simplicity. 
Let $I_i=[t_{i-1},t_i)$ and $|I_i|=t_i-t_{i-1}$. Suppose that $n^4\sum_{i=1}^{m_n}|I_i|^5=O(1)$ as $n\to\infty$. Next, we consider a strongly predictable  kernel 
$K^n=(K^{n,j})_{1\leq j\leq k}:\Omega\times[0,1]\to\bbR^k$ satisfying 
\[
K^{n,j}(t)=K^{n,j}(t_{i-1}) \text{ for } t\in I_i \qquad \text{and} \qquad  K^{n,j}(t)=0 \text{ if } 
t\geq T_j.
\]
The aforementioned sequence of quadratic type martingales $M^n=(M^{n,j})_{1\leq j \leq k}$ is defined 
by 
\begin{align} \label{Mnj}
M^{n,j}_t
= \sqrt{n}\sum_{i=1}^{m_n}
K^{n,j}(t_{i-1})\int_{t_{i-1}\wedge t}^{t_i\wedge t} \int_{t_{i-1}}^s
dW_r dW_s, \qquad t \in [0,1].
\end{align}
Let $K:\Omega\times[0,1]\to\bbR$ be a continuous adapted process and set
\bea\label{180524-1}
\bbI_s^j= \half\int_s^{T_j} K(r)^2 dr,\qquad s\in(T_{j-1},T_j]. 
\eea
Our first set of conditions relates the kernel $K^n$ to $K$ and introduces some integrability assumptions, which are similar in spirit to 
assumptions imposed in \cite{Yoshida2013}.
Recall that $F \in \R^q$, set $\ell=k+q+8$ and 
{\colorr let $\frac{1}{3}<d<\frac{1}{2}$. }
\bd
\im[(B1)] {\bf (i)} $K^n(t)\in\bbD_{\ell+1,\infty}(\bbR^k)$ and 
there exists a density $D_{r_1,...,r_m}K^n(t)$ representing each derivative such that 
\beas 
\sup_{r_1,...,r_m\in(0,1),\atop t\in[0,1],\>n\in\bbN} 
\big\| D_{r_1,...,r_m}K^n(t)\big\|_{\mathbb L^p} &<& \infty
\eeas
for every $p>1$ and $m=0,1,\ldots,\ell+1$. 
\bd
\im[(ii)] For every $p>1$ and $j=1,\ldots,k$, 
\beas 
\sup_{1 \leq i \leq m_n } \sup_{t\in (t_{i-1},t_i)}\big\|K^{n,j}(t)-K(t)1_{\{t< T_j\}}\big\|_{\ell,p} &=& 
O(n^{-{\colorr d}})
\eeas
as $n\to\infty$. 

\im [(iii)] For every $p>1$ and $j=1,...,k$, 
\beas
\sup_{s\in(T_{j-1},T_j)}
\left\|\left[\frac{\bbI^j_s}{T_j-s}\right]^{-1}\right\|_{\mathbb L^p}  &<& \infty.
\eeas
\ed
\ed
From (B1)(i), (ii) we deduce that 
\[
C^{n,j}_t=\langle M^{n,j}\rangle_t \toop C^{j}_t  =\half\int_0^{t\wedge T_j} K(s)^2 ds.
\]
Furthermore, (B1)(iii) implies 
\bea\label{290219-1}
\big(C^j_{T_j}-C^j_{T_{j-1}}\big)^{-1}
&\in& \mathbb{L}_{\infty-}
\eea
for $j=1,\ldots,k$. 
In particular, $\det C^{-1}\in \mathbb{L}_{\infty-}$ for $C=(C^{j_1\wedge j_2}_1)_{1\leq j_1,j_2\leq k}$.

Now, let us set 
\begin{align} \label{cnfn}
\widehat{C}_n= \sqrt{n} (C_n -C), \qquad \widehat{F}_n= \sqrt{n}(F_n -F),
\end{align}
where $C_n=C^n_1$ with $C^n_t = (\langle M^{n,j_1},M^{n,j_2} \rangle)_{1\leq j_1,j_2\leq k}$.
In the validation of the asymptotic expansion a truncation functional $s_n:\Omega\to \bbR^k$ will play 
an important  role; see Section \ref{180522-9} for its explicit definition. We set  $\ell_*=2[q/2]+4$ and present the next set of assumptions that determines 
the asymptotic distribution of the vector $(M^n_{\cdot}, N_n, \widehat{C}_n, \widehat{F}_n)$ along with some new integrability conditions.  

\bd
\im[(B2)] {\bf (i)} $F\in\bbD_{\ell+1,\infty}(\bbR^{q})$,
$\sup_{r_1,...,r_m\in(0,1)}\|D_{r_1,...,r_m}F\|_{\mathbb L^p} <\infty$ for every $p>1$ and 
$m=1,...,\ell+1$. Moreover $r\mapsto D_rF$ and 
$(r,s)\mapsto D_{r,s}F$ ($r\leq s$) are continuous a.s. 
\bd
\im[(ii)] $F_n\in\bbD_{\ell+1,\infty}(\bbR^{q})$, $N_n\in\bbD_{\ell+1,\infty}(\bbR^k)$ 
and $s_n=(s^j_n)\in\bbD_{\ell,\infty}(\bbR^k)$. Moreover, 
\beas
\sup_{n\in\bbN}\bigg\{
\big\|\widehat{C}_n\big\|_{\ell,p}
+\big\|\widehat{F}_n\big\|_{\ell+1,p}
+\big\|N_n\big\|_{\ell+1,p}
+\big\|s_n\big\|_{\ell,p}\bigg\}
&<& \infty
\eeas
for every $p>1$. 
\im[(iii)] 
$(M^n,N_n,\widehat{C}_n,\widehat{F}_n)\stab
(M,N,\widehat{C},\widehat{F})$ 
for a random vector $(M,N,\widehat{C},\widehat{F})$ defined on an extension of $(\Omega,\calf,P)$. 
\im[(iv)] 
For $u\in\bbR^k$ and $v\in\bbR^{q}$, 
the conditional expectations 
$\mathbb{E}\big[\widehat{C}|M_1=z\big][u^{\otimes2}]$, $\mathbb{E}\big[\widehat{F}|M_1=z\big][v]$ 
and $\mathbb{E}\big[N|M_1=z\big]$ are in 
the polynomial ring $\bbD_{\ell_*,\infty}(\bbR)[z]$ (with coefficients in $\bbD_{\ell_*,\infty}(\bbR)$). 

\ed
\ed
Finally, we will require a non-degeneracy condition  on the pair  $(M^n_t,F)$. 
Let us introduce the process
\beas 
\bbX^j_t &=& 
(M^{n,1}_1,...,M^{n,j-1}_1,M^{n,j}_t,F). 
\eeas

\bd
\im[(B3)] {\bf (i)} For each $j=1,...,k$, there exists a sequence 
$(\tau^j_n)_{n\in\bbN}\subset(T_{j-1},T_j)$ 
such that $\sup_n\tau^j_n<T_j$ and that 
\beas 
\sup_{t\in[\tau^j_n,T_j]} \mathbb P\big[\det\sigma_{\bbX^j_t}<s^j_n\big] &=& O(n^{-\nu})
\eeas
for some $\nu>\ell/6$. 
\bd
\im[(ii)] 
$\limsup_{n\to\infty} \mathbb E\big[(s^j_n)^{-p}\big]<\infty$ for every $p>1$ and $j=1, \ldots, k.$
\ed
\ed

\subsection{Random symbols $\underline \sigma,\overline \sigma$ and the main result} \label{sec2.3}
In order to present the \edgeworth expansion for the pair $(Z_n, F_n)$ we need to define two {\it random symbols} 
$\underline \sigma$ and $\overline \sigma$, which play a crucial role in what follows. We call $\underline \sigma$ the adaptive
(or classical) random symbol and $\overline \sigma$ the anticipative random symbol. The adaptive random symbol $\underline{\sigma}$ is defined by 
\beas 
\underline{\sigma}(z;\iu,\iv) 
&=&
\half \mathbb E\big[\widehat{C}|M_1=z\big][(\iu)^{\otimes2}]
+\mathbb E\big[N|M_1=z\big][\iu]+ \mathbb E\big[\widehat{F}|M_1=z\big][\iv].
\eeas
Let $\overline{K}(t)=(K(1)1_{\{t<T_j\}})_{1\leq j \leq k}$. 
The anticipative random symbol $\overline{\sigma}$ is defined by 
\bea\label{170207-5}
\overline{\sigma}(iu,iv) 
&=&
\half \int_0^1 \overline{K}(t)[\iu]\sigma_t(\iu,\iv)dt
\eea
where 
\beas 
\sigma_t(\iu,\iv) 
&=&
\bigg(-\half D_tC[u^{\otimes2}]+D_tF[\iv]\bigg)^2
\\&&
+\bigg(-\half D_tD_tC[u^{\otimes2}]+D_tD_tF[\iv]\bigg).
\eeas
The derivative $D_tD_t$ stands for $\lim_{s\uparrow t}D_sD_t$. 
The full random symbol is defined by 
\beas\label{180520-2}
\sigma &= &\underline{\sigma}+\overline{\sigma}
\eeas
and has the form
\bea\label{180520-3}
\sigma(z;\iu,\iv) &=& \sum_\alpha c_\alpha(z) (\iu)^{\alpha_1}(\iv)^{\alpha_2}
\eea
where 
$\alpha=(\alpha_1,\alpha_2)\in\bbZ_+^k\times\bbZ_+^{q}$.

Under conditions of the previous subsection, the non-degeneracy of $F$ is ensured and it has a differentiable density function $p^F$. 
Thus, the following function $p_n$ is well defined: 
\begin{align} \label{pn}
p_n(z,x) 
&=
\mathbb E\big[\phi(z;0,C)|F=x]p^F(x)
\\
&+n^{-1/2} \sum_\alpha (-d_z)^{\alpha_1}(-d_x)^{\alpha_2}
\bigg\{ \mathbb E\big[c_\alpha(z)\phi(z;0,C)|F=x\big]p^F(x)\bigg\}. \nonumber 
\end{align}
For positive numbers $R$ and $\gamma$, 
$\cale(R,\gamma)$ denotes the set of measurable functions $f:\bbR^{k+q}\to\bbR$ 
such that $|f(z,x)|\leq R(1+|z|+|x|)^\gamma$ for all $z\in\bbR^k$ and $x\in\bbR^{q}$. 
The error of the approximation of the distribution of $(Z_n,F_n)$ by $p_n$ is evaluated by 
the quantity
\beas 
\Delta_n(f) &=& 
\bigg| \mathbb{E}\big[f(Z_n,F_n)\big]-\int f(z,x)p_n(z,x)dzdx\bigg|
\eeas
for $f\in\cale(R,\gamma)$. 
The main result of this section is the following. 
\begin{theo}\label{20170206-1} 
Suppose that $Z_n$ is given by (\ref{vndec}) with 
$M_n$ defined by (\ref{Mnj}).
Suppose that $(B1)$, $(B2)$ and $(B3)$ are satisfied. Then 
\bea\label{180520-5}
\sup_{f\in\cale(R,\gamma)}\Delta_n(f)=o(n^{-1/2})
\eea
as $n\to\infty$ for any positive numbers $R$ and $\gamma$. 
\end{theo}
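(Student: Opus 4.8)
The plan is to deduce \eqref{180520-5} from the general martingale Edgeworth machinery of Yoshida \cite{Yoshida2013}: conditions (B1)--(B3) are tailored precisely so as to verify the hypotheses of that theory for the quadratic martingale $M^n$, after which the expansion follows from its conclusion. The central object is the characteristic function $\Psi_n(u,v)=\E[e^{\sqrt{-1}(u\cdot Z_n+v\cdot F_n)}]$, and the goal is to produce an asymptotic expansion of $\Psi_n$ to order $n^{-1/2}$ whose inverse (partial) Fourier transform is exactly $p_n$ of \eqref{pn}. Since the test functions in $\cale(R,\gamma)$ carry only polynomial growth, I would first pass to a smoothed version of $f$ and use the non-degeneracy below to dominate the tails, reducing matters to a weighted bound on $\Psi_n-\widehat{p}_n$.

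The expansion of $\Psi_n$ comes from the martingale structure. Writing $Z_n=M_n+n^{-1/2}N_n$ and $F_n=F+n^{-1/2}\widehat{F}_n$, I would apply It\^o's formula to the exponential $e^{\sqrt{-1}u\cdot M^n_t}$ and exploit that, conditionally on $\calf$, the limit $M_1$ is $MN(0,C)$; the leading term is then $\E[\phi(z;0,C)\mid F=x]\,p^F(x)$. The $n^{-1/2}$ correction splits into two pieces matching $\underline\sigma$ and $\overline\sigma$. The adaptive part $\underline\sigma$ collects the drift-type perturbations: the discrepancy $\widehat{C}_n=\sqrt{n}(C_n-C)$ in the quadratic variation, the term $N_n$, and $\widehat{F}_n$, each entering through its conditional expectation given $M_1=z$ (this is where (B2)(iv) is used). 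The anticipative part $\overline\sigma$ arises from the interaction between the increments $dM^n_t$ and the Malliavin derivatives of $C$ and $F$; integrating by parts on Wiener space turns these into the terms $-\half D_tC[u^{\otimes2}]+D_tF[\iv]$ and their second-derivative analogues appearing in $\sigma_t$, with the kernel $\BK(t)=K(1)1_{\{t<T_j\}}$ reflecting the piecewise-constant quadratic form of $M^n$.

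The crux is the non-degeneracy, which licenses the integration by parts used both in defining $p_n$ (existence and differentiability of $p^F$) and in extracting the anticipative symbol, and which must therefore be established first. Here I would use (B3): along the times $\tau^j_n\uparrow T_j$ the Malliavin covariance $\det\sigma_{\bbX^j_t}$ of the partially frozen vector $\bbX^j_t=(M^{n,1}_1,\dots,M^{n,j-1}_1,M^{n,j}_t,F)$ stays above the truncation level $s^j_n$ except on a set of probability $O(n^{-\nu})$ with $\nu>\ell/6$, while (B3)(ii) gives $\limsup_n\E[(s^j_n)^{-p}]<\infty$. Combining these with the integrability of (B1)(i) and (B2)(ii) yields an $\mathbb L^p$-controlled inverse of the truncated Malliavin matrix, the exponent $\nu>\ell/6$ being chosen precisely so that the degenerate set remains negligible after the $\ell$ derivative losses incurred in the integration by parts are accounted for. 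I expect this to be the main obstacle: because $M^n$ is a second-order Wiener chaos functional, its derivative $D_tM^n$ is itself a random first-chaos object and $\sigma_{\bbX^j_t}$ can approach degeneracy, so the frozen-coordinate device together with the truncation $s_n$ is essential and constitutes the ``quite different non-degeneracy arguments'' referred to in the text.

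Finally, with non-degeneracy in hand I would assemble the pieces: the validated characteristic-function expansion is inverted through the conditional Fourier calculus to produce $p_n(z,x)$, and the remainder is shown to be $o(n^{-1/2})$ uniformly over $\cale(R,\gamma)$ via the standard smoothing inequality, the polynomial moment bounds from (B2)(ii) being used to dominate the weight $(1+|z|+|x|)^\gamma$.
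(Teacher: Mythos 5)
Your overall route coincides with the paper's: both reduce the statement to verifying the hypotheses of \cite[Theorem 1]{Yoshida2013} after building a truncation $\psi_n$ from $s_n$ and the quadratic-variation discrepancies, both decompose the truncated characteristic function into a principal part, an adaptive part yielding $\underline{\sigma}$, and an anticipative part $\Phi^{2,\alpha}_n$ yielding $\overline{\sigma}$, and both identify the non-degeneracy created by the multiple stopping times as the new difficulty relative to \cite[Theorem 4]{Yoshida2013}.

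There is, however, a concrete gap in the step you yourself call the crux. The decisive estimate is the uniform decay $\sup_n\sup_{(u,v)}n^{1/2}|(u,v)|^{k+q+1-\ep}\big|\Phi^{2,\alpha}_n(u,v)\big|<\infty$ over $|(u,v)|\leq n^{\bar{d}/2}$, and after the duality applied to the double stochastic integrals this reduces to bounding expectations involving $e^n_s(u)D_s\big(\Psi(u,v)\psi_n\big)$ at \emph{intermediate} times $s\in(T_{j-1},T_j)$. At such an $s$ the only components of $M^n$ available for the Malliavin integration by parts are $\big(M^{n,1}_{T_1},\ldots,M^{n,j-1}_{T_{j-1}},M^{n,j}_s,F\big)$ --- precisely your frozen vector $\bbX^j_s$ --- so this device yields decay only in $|(u_1,\ldots,u_j,v)|$ and gives nothing in the directions $u_{j+1},\ldots,u_k$ of the components that are still running at time $s$. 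When $|(u_{j+1},\ldots,u_k)|$ dominates $|(u,v)|$, your argument produces no decay at all. The paper supplies the missing ingredient from a different source: the Gaussian factor $\bbG_s=\exp\big(-\half(C_1-C_s)[u^{\otimes2}]\big)$ is controlled via the lower bound $(C_1-C_s)[u^{\otimes2}]\geq \calm(T_{j+1},T_k)\big[(u_{j+1},\ldots,u_k)^{\otimes2}\big]$, where $\calm(T_{j+1},T_k)=\big(\half\int_{T_j}^{T_{i_1\wedge i_2}}K(t)^2dt\big)_{i_1,i_2=j+1,\ldots,k}$ has $\det\calm(T_{j+1},T_k)^{-1}\in\bbL_{\infty-}$ by (B1)(iii); one then invokes whichever of the two decays applies according to whether $|(u_1,\ldots,u_j,v)|\geq|(u_{j+1},\ldots,u_k)|$ or not, with a separate frozen vector and the matrix $\calm(\tau^j_n,T_k)$ handling $s\in(T_{j-1},\tau^j_n)$. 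This is exactly where (B1)(iii) enters the proof; without this complementary argument the required bound, and hence the theorem, is not obtained.
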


\section{Stochastic expansion of the error process} \label{sec3}
\setcounter{equation}{0}
\renewcommand{\theequation}{\thesection.\arabic{equation}}
In this section we derive  explicit expressions for the first and second order approximation of the normalised 
error process $V^n$. The following well known lemma, which presents an explicit solution 
of an affine stochastic differential equation, will be a helpful tool.

\begin{lem} \label{lem1}
Assume that $(Y_t)_{t\in [0,1]}$  is the unique strong solution of the stochastic differential
equation
\begin{align} \label{linearsde}
dY_t= (c_t Y_t +  \tilde{c}_t)dt+ (d_t Y_t +  \tilde{d}_t) dW_t \qquad \text{with} \qquad Y_0=y_0,
\end{align} 
where $(c_t)_{t\in [0,1]}, (\tilde c_t)_{t\in [0,1]}, (d_t)_{t\in [0,1]}, (\tilde d_t)_{t\in [0,1]}$ are predictable stochastic processes. 
Then the process $(Y_t)_{t\in [0,1]}$ exhibits an explicit solution given by 
\begin{align} \label{explicitsde}
Y_t &= \Sigma_t \left[ y_0 +   \int_0^t   \Sigma_s^{-1}  \left((\tilde{c}_s - d_s \tilde{d}_s)ds +  \tilde{d}_s dW_s \right) \right], \\
  \Sigma_t &= \exp \left( \int_0^t d_s dW_s +   \int_0^t \left(c_s - \frac12 d_s^2\right) ds \right). \nonumber
\end{align}  
\end{lem}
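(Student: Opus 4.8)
The plan is to prove the formula by the classical method of variation of parameters (integrating factor) for linear SDEs, verifying directly that the right-hand side of \eqref{explicitsde} solves \eqref{linearsde}; since uniqueness of the strong solution is already part of the hypothesis, it suffices to exhibit one solution and identify it with $Y_t$. First I would introduce the fundamental solution $\Sigma_t$ of the associated homogeneous equation $d\Sigma_t = c_t \Sigma_t\,dt + d_t \Sigma_t\,dW_t$ with $\Sigma_0 = 1$. Applying It\^o's formula to $\log \Sigma_t$ identifies $\Sigma_t$ with the stated exponential
\[
\Sigma_t = \exp\left(\int_0^t d_s\,dW_s + \int_0^t\left(c_s - \tfrac12 d_s^2\right)ds\right),
\]
which is strictly positive, so that $\Sigma_t^{-1}$ is well defined and itself an It\^o process.

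Next I would compute $d(\Sigma_t^{-1})$. Applying It\^o's formula to $x \mapsto x^{-1}$ and using $d\langle\Sigma\rangle_t = d_t^2 \Sigma_t^2\,dt$ gives
\[
d\left(\Sigma_t^{-1}\right) = \Sigma_t^{-1}\left((d_t^2 - c_t)\,dt - d_t\,dW_t\right).
\]
I would then set $Z_t = \Sigma_t^{-1} Y_t$ and apply the It\^o product rule
\[
dZ_t = Y_t\,d\left(\Sigma_t^{-1}\right) + \Sigma_t^{-1}\,dY_t + d\langle \Sigma^{-1}, Y\rangle_t.
\]
The delicate term is the cross-variation: since the martingale part of $d(\Sigma_t^{-1})$ is $-\Sigma_t^{-1}d_t\,dW_t$ and that of $dY_t$ is $(d_t Y_t + \tilde{d}_t)\,dW_t$, one obtains $d\langle\Sigma^{-1}, Y\rangle_t = -\Sigma_t^{-1}d_t(d_t Y_t + \tilde{d}_t)\,dt$. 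This is the step I expect to be the main (and essentially the only) obstacle: not because it is deep, but because the $-d_s\tilde{d}_s$ correction in the drift of \eqref{explicitsde} originates precisely from this cross-variation, so a sign or bracketing slip here would corrupt the final formula. The remainder is careful bookkeeping.

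Finally, substituting the three contributions and collecting terms, the coefficients of $Y_t\,dt$ cancel, since $(d_t^2 - c_t) + c_t - d_t^2 = 0$, and the coefficients of $Y_t\,dW_t$ cancel, since $-d_t + d_t = 0$, leaving
\[
dZ_t = \Sigma_t^{-1}\left((\tilde{c}_t - d_t \tilde{d}_t)\,dt + \tilde{d}_t\,dW_t\right).
\]
Integrating from $0$ to $t$ with $Z_0 = y_0$ and multiplying back by $\Sigma_t$ yields exactly \eqref{explicitsde}. As \eqref{linearsde} is assumed to admit a unique strong solution, this identifies $Y_t$ with the explicit expression, completing the proof.
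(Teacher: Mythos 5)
Your proof is correct and follows essentially the same route as the paper: variation of parameters via the stochastic exponential $\Sigma_t$, with the $-d_s\tilde d_s$ correction emerging from the It\^o cross-variation term. The only cosmetic difference is direction — you compute $d(\Sigma_t^{-1}Y_t)$ starting from the given solution, whereas the paper defines $Z_t$ by the explicit integral and verifies that $\Sigma_t Z_t$ satisfies \eqref{linearsde} — but the underlying product-rule computation and cancellations are identical.
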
 

\begin{proof}
The proof follows a classical route for solutions of inhomogeneous differential  equations. First, we recall that 
the process $\Sigma$ satisfies the stochastic differential equation 
$d\Sigma_t= c_t \Sigma_t dt+ d_t \Sigma_t dW_t$, which is shown 
by applying It\^o's formula to the function $f(x,y)=\exp(x+y)$.  Now, setting 
$Z_t= y_0+\int_0^t   \Sigma_s^{-1}  (\tilde{c}_s - d_s \tilde{d}_s)ds +  \int_0^t   \Sigma_s^{-1}  \tilde{d}_s dW_s $, we conclude by the product formula
that 
\begin{align*}
Y_t&=y_0+ \int_0^t \Sigma_s dZ_s +  \int_0^t Z_s d\Sigma_s + \langle Z, \Sigma \rangle_t \\[1.5 ex]
&= y_0+  \int_0^t   (\tilde{c}_s - d_s \tilde{d}_s) ds +  \int_0^t    \tilde{d}_s dW_s + 
 \int_0^t c_s Z_s \Sigma_s ds +  \int_0^t d_s Z_s \Sigma_s dW_s + 
 \int_0^t   d_s \tilde{d}_s ds \\[1.5 ex]
 &= y_0 + \int_0^t  (c_s Y_s + \tilde{c}_s) ds + \int_0^t  (d_s Y_s + \tilde{d}_s) ds
\end{align*}   
and the proof is complete.
\end{proof}
Applying the same type of proof as in Lemma \ref{lem1}, we deduce that the limiting process $V$
introduced at \eqref{limitsde} can be written explicitly 
as 
\begin{align*}
V_t = -\frac{1}{\sqrt 2}\Sigma_t  \int_0^t   \Sigma_s^{-1}  bb'(X_s) dB_s,
\end{align*}
where the process $(\Sigma_t)_{t\geq 0}$ is defined by 
\begin{align} \label{vexplicit}
\Sigma_t= 
\exp \left( \int_0^t b'(X_s) dW_s +   \int_0^t \left(a' - \frac12 (b')^2 \right)(X_s) ds \right).
\end{align}
Since the process $\Sigma$ is $\mathcal F$-measurable, we see that $V$ is an $\mathcal F$-conditional
Gaussian martingale with $\mathcal F$-conditional mean zero.

In the first step we will obtain an explicit representation of the leading term of the normalised error process $V^n$ defined at \eqref{convergence}. This stochastic expansion can be 
also found in the proof of \cite[Theorem 1.2]{JP}. Nevertheless, we will prove this result for the sake of completeness.  

\begin{theo} \label{th2n}
Let us consider the process 
\begin{align} \label{barvn}
\overline{V}_t^n = -\sqrt{n} \Sigma_t  \int_0^t   \Sigma_{\varphi_n(s)}^{-1}  bb'(X^n_{\varphi_n(s)}) (W_s - W_{\varphi_n(s)}) dW_s,
\end{align}
where $\Sigma$ is defined in \eqref{vexplicit}. Then it holds that 
\[
\sup_{t \in [0,1]} |V_t^n -  \overline{V}_t^n| \toop 0.
\]
\end{theo}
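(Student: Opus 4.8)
The plan is to show that $V^n$ and $\overline V^n$ solve, up to an asymptotically negligible forcing, the \emph{same} linear stochastic differential equation (the homogeneous version of \eqref{limitsde}), and then to transfer the smallness of the forcing to $\sup_t|V^n_t-\overline V^n_t|$ via the variation-of-constants representation of Lemma \ref{lem1}. First I would derive the dynamics of $V^n=\sqrt n\,U^n$. Subtracting \eqref{sde} from \eqref{Eulersde} and multiplying by $\sqrt n$ gives
\begin{align*}
dV^n_t=\sqrt n\big(a(X^n_{\varphi_n(t)})-a(X_t)\big)dt+\sqrt n\big(b(X^n_{\varphi_n(t)})-b(X_t)\big)dW_t.
\end{align*}
Since $\varphi_n(s)=\varphi_n(t)$ for $s$ in the mesh interval containing $t$, the backward increment $\zeta^n_t:=X^n_{\varphi_n(t)}-X^n_t$ has the closed form $\zeta^n_t=-a(X^n_{\varphi_n(t)})(t-\varphi_n(t))-b(X^n_{\varphi_n(t)})(W_t-W_{\varphi_n(t)})$. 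Writing $X^n_{\varphi_n(t)}-X_t=U^n_t+\zeta^n_t$, expanding $a,b$ to first order about $X_t$, and using $\sqrt n\,U^n_t=V^n_t$, I obtain
\begin{align*}
dV^n_t=a'(X_t)V^n_t\,dt+b'(X_t)V^n_t\,dW_t+\sqrt n\,a'(X_t)\zeta^n_t\,dt+\sqrt n\,b'(X_t)\zeta^n_t\,dW_t+\sqrt n\,R^a_t\,dt+\sqrt n\,R^b_t\,dW_t,
\end{align*}
where $R^a_t,R^b_t$ are the first-order Taylor remainders.

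On the other side, recalling from \eqref{vexplicit} that $d\Sigma_t=a'(X_t)\Sigma_t\,dt+b'(X_t)\Sigma_t\,dW_t$, the product rule applied to \eqref{barvn}, written as $\overline V^n_t=\Sigma_t\int_0^t\Sigma^{-1}_{\varphi_n(s)}G^n_s\,dW_s$ with $G^n_s:=-\sqrt n\,bb'(X^n_{\varphi_n(s)})(W_s-W_{\varphi_n(s)})$, yields
\begin{align*}
d\overline V^n_t=a'(X_t)\overline V^n_t\,dt+b'(X_t)\overline V^n_t\,dW_t+\Sigma_t\Sigma^{-1}_{\varphi_n(t)}G^n_t\big(dW_t+b'(X_t)\,dt\big).
\end{align*}
Thus both processes share the homogeneous part, and it remains to match the forcings. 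In $dV^n$ the only forcing that survives in the limit is the $dW$-term $b'(X_t)\sqrt n\,\zeta^n_t\,dW_t$, whose leading piece is $-\sqrt n\,b'(X_t)b(X^n_{\varphi_n(t)})(W_t-W_{\varphi_n(t)})\,dW_t$; in $d\overline V^n$ the leading forcing is $G^n_t\,dW_t$, using $\Sigma_t\Sigma^{-1}_{\varphi_n(t)}\toop 1$. The two differ only through the factor $b(X^n_{\varphi_n(t)})(b'(X_t)-b'(X^n_{\varphi_n(t)}))$, which tends to $0$ uniformly by continuity of $b'$ and $\sup_t|X^n_{\varphi_n(t)}-X_t|\toop 0$.

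It then remains to verify that every discarded term is uniformly negligible. The drift contribution $\sqrt n\,a'(X_t)\zeta^n_t\,dt$ and the companion drift $b'(X_t)\Sigma_t\Sigma^{-1}_{\varphi_n(t)}G^n_t\,dt$ are of the form $\int_0^t\sqrt n\,g_s(W_s-W_{\varphi_n(s)})\,ds$; a direct second-moment computation gives $\E[(\int_{t_{i-1}}^{t_i}(W_s-W_{\varphi_n(s)})ds)^2]=O(n^{-3})$ per mesh interval, so summing the $\sim n$ martingale increments the whole expression is $O_P(n^{-1/2})$ uniformly. The drift part $-\sqrt n\,a(X^n_{\varphi_n(t)})(t-\varphi_n(t))$ of $\zeta^n$ entering a $dW$-integral is controlled by Doob's and Burkholder's inequalities, whose bound is $C(n\int_0^1(s-\varphi_n(s))^2ds)^{1/2}=O(n^{-1/2})$. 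For the remainders, since $\sup_t|U^n_t|$ and $\sup_t|\zeta^n_t|$ are $O_P(n^{-1/2})$, one has $\sqrt n(U^n_t+\zeta^n_t)^2=O_P(n^{-1/2})$, and uniform control follows from a maximal inequality together with the classical moment bounds $\E[\sup_t|X_t|^p]+\sup_n\E[\sup_t|X^n_t|^p]<\infty$ and $\E[\sup_t|U^n_t|^p]=O(n^{-p/2})$.

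Subtracting the two equations, $D^n:=V^n-\overline V^n$ solves $dD^n_t=a'(X_t)D^n_t\,dt+b'(X_t)D^n_t\,dW_t+d\rho^n_t$, where the semimartingale forcing $\rho^n$ has drift and diffusion coefficients vanishing uniformly in probability by the estimates above. Applying the explicit formula of Lemma \ref{lem1} to $D^n$ and using $\sup_t|\Sigma_t|+\sup_t|\Sigma^{-1}_t|\in\mathbb L_{\infty-}$ with one last maximal inequality for the resulting stochastic integral delivers $\sup_t|D^n_t|\toop 0$. The hard part will be the uniform (rather than pointwise) control of the Taylor remainders: under the standing assumption that $a,b$ are merely $C^1$, the remainder is only $o(|X^n_{\varphi_n(t)}-X_t|)$, not $O(|X^n_{\varphi_n(t)}-X_t|^2)$, so I would localize on the events $\{\sup_t(|X_t|+|X^n_t|)\le K\}$, whose probability tends to $1$ uniformly in $n$ as $K\to\infty$ and on which $a',b'$ are uniformly continuous, and combine their modulus of continuity with $\sup_t|X^n_{\varphi_n(t)}-X_t|\toop 0$ to upgrade the $o(\cdot)$ to a genuine $o_P(n^{-1/2})$ bound after multiplication by $\sqrt n$.
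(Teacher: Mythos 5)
Your proposal is correct and follows essentially the same route as the paper: Taylor-linearize the dynamics of $V^n$, recognize the affine SDE structure, invoke Lemma \ref{lem1}, and control the remaining forcing terms by martingale maximal inequalities together with the moment bound \eqref{BDG} and the standing localization. The only organizational difference is that you apply Lemma \ref{lem1} to the difference $V^n-\overline V^n$ with the limiting coefficients $a'(X_t)$, $b'(X_t)$, whereas the paper solves for $V^n$ itself with the mean-value-perturbed coefficients and then proves $\Sigma^n\ucp\Sigma$ separately in \eqref{ucp1}--\eqref{ucp4}; this sidesteps the perturbed exponential but changes nothing essential.
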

We remark at this stage that the process $(\Sigma_t^{-1} \overline{V}_t^n)_{t \in [0,1]}$ is a continuous
martingale of quadratic form with random weights. Thus, second order Edgeworth expansion for the functional $\overline{V}_t^n$ can be deduced from the corresponding expansion for 
the pair $( \Sigma_t ,\Sigma_t^{-1} \overline{V}_t^n)$.  

In the next step we need to determine the second order stochastic expansion for the standardised 
error process $(V_t^n)_{t \in [0,1]}$. Apart from rather complex approximation techniques, the result
of Lemma \ref{lem1} is crucial for the next theorem.  We remark that this statement has an interest
in its own right.

\begin{theo} \label{th3}
Assume that the functions $a,b$ are globally Lipschitz and $a,b \in C^2(\R)$. Define the process
$(R_t^n)_{t \geq 0}$ via  
\begin{align} \label{rn}
dR_t^n &= \left(\frac{1}{2 \sqrt{n}} a''(X_t) (V_t^n)^2 + \sqrt{n} 
 b\left((b')^2 -a'\right)(X_{\varphi_n(t)}^n)
(W_t - W_{\varphi_n(t)})  \right.  \\[1.5 ex]
 &\left. -\sqrt{n}aa'(X_{\varphi_n(t)}^n) 
(t - \varphi_n(t)) - \frac{\sqrt{n}}{2} b^2a''(X_{\varphi_n(t)}^n) (W_t - W_{\varphi_n(t)})^2
\right) dt\nonumber \\[1.5 ex]
&+ \Big(
\frac{1}{2 \sqrt{n}} b''(X_t) (V_t^n)^2 +\sqrt{n}\Big( b(b')^2
 -\frac{b^2b''}{2} \Big) (X_{\varphi_n(t)}^n)
(W_t - W_{\varphi_n(t)})^2 
  \nonumber  \\[1.5 ex]
 &    - \sqrt{n}ab'(X_{\varphi_n(t)}^n) (t - \varphi_n(t))
 \Big) dW_t = R_t^n(1) dt +  R_t^n(2) dW_t \nonumber
\end{align}
Then the process $\sqrt{n} R^n$ is tight and we have that 
\[
\sqrt{n} \sup_{t \in [0,1]} \left|V_t^n -  
\left(\overline{V}_t^n + \Sigma_t \int_0^t \Sigma_s^{-1}  \left(dR_s^n  - b'(X_s)R_s^n(2) ds\right)\right) \right| \toop 0 .
\]
\end{theo}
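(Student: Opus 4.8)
The plan is to realise $V^n$ as the solution of an affine stochastic differential equation and to solve it explicitly via Lemma \ref{lem1}, reading off the leading term $\overline{V}^n$ and the second order correction built from $R^n$. First I would write down the exact dynamics of $V^n=\sqrt n(X^n-X)$, namely
\begin{align*}
dV_t^n = \sqrt n\big(a(X^n_{\varphi_n(t)})-a(X_t)\big)dt + \sqrt n\big(b(X^n_{\varphi_n(t)})-b(X_t)\big)dW_t, \qquad V_0^n=0 .
\end{align*}
The next step is a second order stochastic Taylor expansion of the two coefficient increments, routed through the intermediate value $X^n_t$: I would write $a(X^n_{\varphi_n(t)})-a(X_t)=\big(a(X^n_t)-a(X_t)\big)+\big(a(X^n_{\varphi_n(t)})-a(X^n_t)\big)$, and similarly for $b$. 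Expanding the first bracket around $X_t$ produces the homogeneous parts $a'(X_t)V_t^n$, $b'(X_t)V_t^n$ together with the quadratic terms $\tfrac1{2\sqrt n}a''(X_t)(V_t^n)^2$, $\tfrac1{2\sqrt n}b''(X_t)(V_t^n)^2$; expanding the second bracket, where the Euler increment is \emph{exactly} $X^n_t-X^n_{\varphi_n(t)}=a(X^n_{\varphi_n(t)})(t-\varphi_n(t))+b(X^n_{\varphi_n(t)})(W_t-W_{\varphi_n(t)})$, produces the discretisation terms carrying the factors $W_t-W_{\varphi_n(t)}$, $(W_t-W_{\varphi_n(t)})^2$ and $t-\varphi_n(t)$, plus remainders of third order in the elementary increments.

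This exhibits $V^n$ as an affine equation $dV^n_t=(a'(X_t)V^n_t+\tilde c_t)dt+(b'(X_t)V^n_t+\tilde d_t)dW_t$ modulo a remainder, so Lemma \ref{lem1} (with $c_t=a'(X_t)$, $d_t=b'(X_t)$, $y_0=0$) gives $V^n_t=\Sigma_t\int_0^t\Sigma_s^{-1}\big((\tilde c_s-b'(X_s)\tilde d_s)ds+\tilde d_s\,dW_s\big)$ with $\Sigma$ as in \eqref{vexplicit}. I would then split the source $(\tilde c,\tilde d)$ into its dominant part and the rest. The dominant diffusive source is $-\sqrt n\,bb'(X^n_{\varphi_n(s)})(W_s-W_{\varphi_n(s)})$, and solving the affine equation with this source alone reproduces $\overline{V}^n$ of Theorem \ref{th2n} after freezing $\Sigma_s^{-1}$ to $\Sigma_{\varphi_n(s)}^{-1}$. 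This freezing is admissible because the product rule applied to $\overline{V}^n=\Sigma_t A_t$, with $A_t=-\sqrt n\int_0^t\Sigma_{\varphi_n(s)}^{-1}bb'(X^n_{\varphi_n(s)})(W_s-W_{\varphi_n(s)})dW_s$, shows that the drift and diffusion sources of $\overline{V}^n$ satisfy the exact identity $\tilde c^{(0)}_t=b'(X_t)\tilde d^{(0)}_t$; consequently $\overline{V}^n$ enters the variation-of-constants formula only through its diffusion source, since the combination $\tilde c^{(0)}_t-b'(X_t)\tilde d^{(0)}_t$ vanishes. The remaining second order pieces of $(\tilde c,\tilde d)$ are precisely $R^n(1)$, $R^n(2)$, and applying Lemma \ref{lem1} to them yields the correction $\Sigma_t\int_0^t\Sigma_s^{-1}(dR^n_s-b'(X_s)R^n_s(2)ds)$; a short product-rule computation confirms that this object solves $dP^n_t=(a'(X_t)P^n_t+R^n_t(1))dt+(b'(X_t)P^n_t+R^n_t(2))dW_t$, and the same explicit form, together with the uniform moment bounds below, gives the tightness of $\sqrt n R^n$.

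It remains to bound $E^n:=V^n-\big(\overline{V}^n+\Sigma\int\Sigma^{-1}(dR^n-b'(X)R^n(2)ds)\big)$. By construction $E^n$ again solves an affine equation driven by the residual sources $(\beta,\gamma)$ left over after the matching, so Lemma \ref{lem1} expresses $E^n$ through $\Sigma_t\int_0^t\Sigma_s^{-1}\big((\beta_s-b'(X_s)\gamma_s)ds+\gamma_s\,dW_s\big)$, and the target is $\sqrt n\sup_{t\in[0,1]}|E^n_t|\toop0$. The main obstacle is exactly this remainder estimate, and the delicate point is that every non-negligible $O(1/\sqrt n)$ contribution must have cancelled in the matching, so that the surviving terms appear only in \emph{compensated} combinations — such as $(W_s-W_{\varphi_n(s)})^2-(s-\varphi_n(s))$, $(s-\varphi_n(s))(W_s-W_{\varphi_n(s)})$, and cubes of the increments — whose $\Sigma_s^{-1}$-weighted $ds$- and $dW_s$-integrals, after multiplication by $\sqrt n$, tend to zero uniformly in $t$. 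I would establish this using the Burkholder--Davis--Gundy and Doob maximal inequalities for the stochastic integrals, elementary Riemann-sum estimates for the Lebesgue integrals on the uniform mesh $t_i=i/n$ (so $t-\varphi_n(t)\le 1/n$ and $\E[(W_s-W_{\varphi_n(s)})^{2p}]\lesssim(s-\varphi_n(s))^p$), and the uniform-in-$n$ $\mathbb L^p$-bounds for $X$, $X^n$ and $V^n$ that follow from the global Lipschitz and $C^2$ assumptions; a final Gronwall argument for the affine equation satisfied by $E^n$ then closes the estimate. The heaviest part is precisely the verification that no uncompensated second order drift of the type $\sqrt n\,g(X^n_{\varphi_n(s)})(W_s-W_{\varphi_n(s)})^2$ survives, since such a term would contribute at order $1/\sqrt n$ and destroy the conclusion.
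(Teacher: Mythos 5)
Your proposal follows essentially the same route as the paper's proof: the second order Taylor expansion of $\sqrt n\big(a(X^n_{\varphi_n(t)})-a(X_t)\big)$ and $\sqrt n\big(b(X^n_{\varphi_n(t)})-b(X_t)\big)$ routed through $X^n_t$ with the exact Euler increment, variation of constants via Lemma \ref{lem1}, identification of $\overline{V}^n$ with the leading diffusive source (your observation that its drift and diffusion sources satisfy $\tilde c^{(0)}_t=b'(X_t)\tilde d^{(0)}_t$ is exactly why the paper's $d\overline{V}^n$ contributes no extra $ds$-integral, and the discrepancy $\Sigma_s^{-1}-\Sigma_{\varphi_n(s)}^{-1}$ is precisely what generates the $\sqrt n\,b(b')^2(W-W_{\varphi_n})$ and $\sqrt n\,b(b')^2(W-W_{\varphi_n})^2$ terms in $R^n(1)$ and $R^n(2)$), and negligibility of the residual sources by the same BDG/Doob-type criteria the paper invokes from the proof of Theorem \ref{th2n}. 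The argument is correct and matches the paper's.
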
 
Theorem \ref{th3} implies that, for any fixed $t\in[0,1]$, we have the stochastic expansion 
$V_t^n = \Sigma_t(M^n_t + n^{-1/2} N^n_t) $ with 
\begin{align} \label{mtn}
M^n_t=\Sigma_t^{-1} \overline{V}_t^n, \qquad N^n_t= \sqrt{n}
\int_0^t \Sigma_s^{-1}  \left(dR_s^n - b'(X_s)R_s^n(2) ds \right) + o_{\mathbb P}(1).
\end{align}
In the next section we will determine the stable central limit theorem 
for the triplet $(M^n_t, \sqrt{n}(C_t^n -C_t), N^n_t)_{t \in [0,1]}$.

\section{Stable central limit theorems and Edgeworth expansion} \label{sec5}
\setcounter{equation}{0}
\renewcommand{\theequation}{\thesection.\arabic{equation}}

\subsection{Central limit theorems} \label{sec5.1}
Having derived the stochastic expansion for the standardised error process $(V_t^n)_{t \in [0,1]}$
in the previous section, we now need to prove the stable central limit theorem required in assumpion
(B2)(iii). For this purpose we introduce the following auxiliary processes:

\begin{align}
A_t^n(1) &=  n \int_0^t \Sigma_{s}^{-1} b\left((b')^2 - a'\right)(X_{\varphi_n(s)}^n) 
\left(\varphi_n(s+n^{-1}) - s \right) dW_s \\[1.5 ex]
&-n \int_0^t  \Sigma_{s}^{-1}  ab'(X_{\varphi_n(s)}^n) 
(s - \varphi_n(s)) dW_s  \nonumber \\[1.5 ex]
& + n \int_0^t \Sigma_{s}^{-1} \left(b(b')^2
-\frac{b^2b''}{2} \right)(X_{\varphi_n(s)}^n)
(W_s - W_{\varphi_n(s)})^2  dW_s,  \nonumber \\[1.5 ex]
A_t^n(2) &= 2n^{3/2} \int_0^t   \left( \Sigma_{s}^{-1}  bb'(X_{\varphi_n(s)}^n) 
\right)^2 \left(\varphi_n(s+n^{-1}) - s \right) \left(W_s - W_{\varphi_n(s)} \right) dW_s, \\[1.5 ex]
A_t^n(3) &=  n  \int_0^t \Sigma_s^{-1} a\left((b')^2 - a' \right) (X_{\varphi_n(s)}^n) \left(s - \varphi_n(s) \right) ds \\[1.5 ex]
& - \frac{n}{2} \int_0^t \Sigma_s^{-1}
\left(b^2a'' + b^2b'b'' - 2b(b')^3
\right) (X_{\varphi_n(s)}^n) \left(W_s - W_{\varphi_n(s)} \right)^2 ds.  \nonumber
\end{align} 
Our first asymptotic result is the following stable central limit theorem.

\begin{prop} \label{firstclt} Assume that conditions of Theorem \ref{th3} are satisfied.
Then it holds that 
\begin{align} \label{firststab} 
L^n:=\left(M^n, A^n(1), A^n(2) \right) \stab L = \int_{0}^{\cdot} v_s dW_s + \int_{0}^{\cdot} (u_s -v_s^{\star}v_s)^{1/2} dB_s \quad \text{on } C([0,1])^3,
\end{align} 
where $(B_t)_{t \in [0,1]}$ is a $3$-dimensional Brownian motion defined on an extension 
 $(\overline \Omega, \overline{\mathcal F}, \overline{\mathbb P})$ of the original probability space and independent of $\mathcal{F}$,
 and the processes $v_s=(v_s^1, v_s^2,v_s^3)$, $u_s=(u_s^{ij})_{1\leq i,j\leq3}$ are defined by
 \begin{align*}
&v_s^1=v_s^3=0, \qquad v_s^2= \Sigma_{s}^{-1} \left( b(b')^2 -  \frac{ab' + a'b}{2} -  \frac{b^2b''}{4} \right) (X_s),
\\[1.5 ex]
& u_s^{12}=u_s^{21}=u_s^{23}=u_s^{32}=0, \\[1.5 ex]
& u_s^{11}= \frac 12  \left( \Sigma_{s}^{-1}  bb'(X_{s})\right)^2, \qquad u_s^{33}= \frac 13
\left( \Sigma_{s}^{-1}  bb'(X_{s})\right)^4, \qquad u_s^{13}= u_s^{31}= - \frac 13 \left( \Sigma_{s}^{-1}  bb'(X_{s})\right)^3,  \\[1.5 ex]
&u_s^{22} = \frac 13 \Sigma_{s}^{-2} \Big( 
b^2 \left[\left((b')^2 - a'\right)^2 + \left((b')^2
-\frac{bb''}{2} \right)\left(   4(b')^2 - \frac{3bb''}{2} -a' \right) \right]  \\[1.5 ex]
& +(ab')^2- abb' \left(3(b')^2-a' -bb'' \right) \Big) (X_s).
 \end{align*}
\end{prop}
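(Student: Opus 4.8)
The plan is to recognise that, by the definitions \eqref{barvn}, \eqref{Mnj} and of $A^n(1),A^n(2)$, the triplet $L^n=(M^n,A^n(1),A^n(2))$ is a three-dimensional continuous local martingale with $L^n_0=0$, each component being an It\^o integral $\int_0^\cdot H^{n,i}_s\,dW_s$ against the driving Brownian motion $W$. Since the underlying filtration is generated by $W$, a standard stable central limit theorem for continuous local martingales (see, e.g., \cite{JP}) applies, and it suffices to establish, for each $t\in[0,1]$, the convergences
\begin{align*}
\langle L^{n,i},L^{n,j}\rangle_t \toop \int_0^t u_s^{ij}\,ds
\qquad\text{and}\qquad
\langle L^{n,i},W\rangle_t \toop \int_0^t v_s^{i}\,ds
\end{align*}
with $u,v$ as in the statement. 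The limit is then automatically realised on an extension of the space as $\int_0^\cdot v_s\,dW_s+\int_0^\cdot(u_s-v_s^\star v_s)^{1/2}\,dB_s$, with $B$ a Brownian motion independent of $\mathcal F$ carrying the residual conditional covariation $u-v^\star v$; the functional (C-tight) mode of convergence on $C([0,1])^3$ follows from the convergence of the brackets.

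Because each component is an It\^o integral, the brackets reduce to the time integrals $\langle L^{n,i},L^{n,j}\rangle_t=\int_0^t H^{n,i}_s H^{n,j}_s\,ds$ and $\langle L^{n,i},W\rangle_t=\int_0^t H^{n,i}_s\,ds$. I would evaluate these block by block over the intervals $I_i=[t_{i-1},t_i)$ of length $n^{-1}$, freezing the $\mathcal F_{t_{i-1}}$-measurable coefficients ($\Sigma^{-1}_{\varphi_n(s)}$ and the smooth functions of $X^n_{\varphi_n(s)}$) at the left endpoint and using $\varphi_n(s+n^{-1})-s=t_i-s$ and $s-\varphi_n(s)=s-t_{i-1}$ on $I_i$. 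Each integrand then becomes a polynomial in $W_s-W_{t_{i-1}}$ with deterministic time weights, and the block contributions are governed by the conditional moments of these increments. The key structural observation is a parity one: $M^n$ and $A^n(2)$ carry an odd power $(W_s-W_{t_{i-1}})$, while $A^n(1)$ carries only even powers; hence the integrands of $\langle M^n,A^n(1)\rangle$ and $\langle A^n(1),A^n(2)\rangle$ have vanishing conditional mean, giving $u^{12}=u^{21}=u^{23}=u^{32}=0$, and the same parity in $\langle M^n,W\rangle$ and $\langle A^n(2),W\rangle$ forces $v^1=v^3=0$. The nonzero entries come from the second- and fourth-moment formulas for Brownian increments: $\langle M^n\rangle$ and $\langle A^n(2)\rangle$ produce the constants $\tfrac12$ and $\tfrac13$ in $u^{11},u^{33}$, the cross term $\langle M^n,A^n(2)\rangle$ gives $u^{13}$, while the three pieces of $\int H^{n,A(1)}\,ds$ combine after the block integrations to yield exactly $v^2$.

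For each bracket the limit would be obtained by writing the block sum as its $\mathcal F_{t_{i-1}}$-conditional expectation, a Riemann sum converging in probability to the asserted integral, plus a martingale-difference remainder whose $\mathbb L^2$-norm I would bound by the sum of the block variances and show to vanish (a law of large numbers for weighted power variations). Two technical points must be settled: first, the coefficients are evaluated at the Euler values $X^n_{\varphi_n(s)}$, which I would replace by $X_s$ using $\sup_t|X^n_t-X_t|\toop 0$ together with the global Lipschitz and $C^2$ assumptions and the finiteness of all moments of $\Sigma^{\pm1}$ (guaranteed since $a',b'$ are bounded), the replacement error being of lower order after the $n$- and $n^{3/2}$-scaling; second, the higher-order block moments must be computed. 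The main obstacle is the entry $u^{22}=\lim\langle A^n(1)\rangle$: one must square the three-term integrand of $A^n(1)$, producing three diagonal and three cross contributions whose conditional means involve integrals of $(W_s-W_{t_{i-1}})^2$ and $(W_s-W_{t_{i-1}})^4$ against the weights $(t_i-s)$ and $(s-t_{i-1})$, and only after collecting all of these does one recover the intricate expression stated for $u^{22}$. Finally, the nonnegativity $u-v^\star v\ge0$ needed for the square root is automatic, since this matrix is the limiting conditional covariation of the component of $L^n$ orthogonal to $W$.
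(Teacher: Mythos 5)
Your proposal is correct and takes essentially the same route as the paper: the paper also reduces the claim to the stable CLT for continuous local martingales (citing \cite[Theorem IX.7.3]{JS}) and to the convergences $\langle L^n\rangle_t\toop\int_0^t u_s\,ds$ and $\langle L^n,W\rangle_t\toop\int_0^t v_s\,ds$, dismissing the bracket computation as ``straightforward but tedious''---precisely the blockwise conditional-moment calculation you spell out. The only presentational difference is that the paper verifies the remaining hypothesis $\langle L^n,Q\rangle_t\toop 0$ for bounded martingales $Q$ orthogonal to $W$ directly via $\langle\int_0^{\cdot}w_s\,dW_s,Q\rangle_t=\int_0^t w_s\,d\langle W,Q\rangle_s=0$, whereas you appeal to the filtration being Brownian; both dispose of that condition immediately.
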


\begin{proof}
Note that $L^n$ is a continuous martingale with mean zero. According to \cite[Theorem IX.7.3]{JS}, 
it is sufficient
to prove that
\begin{align*}
\langle L^n \rangle_t \toop \int_0^t u_s ds, \qquad \langle L^n, W \rangle_t \toop \int_0^t v_s ds,
\qquad \langle L^n, Q \rangle_t \toop 0, \qquad  \forall t\in [0,1],
\end{align*}
where the last statement should hold for any bounded continuous martingale $Q$ with $\langle W, Q \rangle=0$. The first two statements follow by a straightforward but tedious  computation taking into account that 
$X_s^n \toop X_s$ for any $s\in [0,1]$, $\sup_{s\in [0,1]} |\varphi_n(s)-s| \to 0$ and the continuity 
of involved processes/functions.  The third condition is a consequence of the formula  $\langle \int_0^{\cdot} w_s dW_s, Q \rangle_t =
\int_0^{t} w_s d\langle W, Q \rangle_s=0$ for any predictable process $(w_s)_{s\in [0,1]}$.
\end{proof}
As a consequence of the previous result we deduce the joint stable central limit theorem for the 
triplet $(M^n_t, N^n_t, \sqrt{n}(C_t^n -C_t))_{t \in [0,1]}$. 

\begin{prop} \label{secondclt}
Assume that conditions of Theorem \ref{th3} are satisfied.
Then it holds that 
\begin{align} \label{secondstab} 
(M^n, N^n, \sqrt{n}(C^n -C)) &\stab \left(L^1, \frac{1}{2} \int_0^{\cdot}
\Sigma_{s} \left(a'' +b''-b'b'' \right)(X_s) (L_s^1)^2 ds + L^2 + A(3) ,L^3 \right)  \nonumber \\[1.5 ex]
& := (M,N, \widehat{C}) \qquad \text{on } C([0,1])^3, 
\end{align}
where the process $L=(L^1,L^2,L^3)$ has been introduced in Proposition \ref{firstclt} and the
process $(A_t(3))_{t \in [0,1]}$ is defined as
\begin{align*}
A_t(3) &=  \int_0^t \Sigma_s^{-1} \left( 
\frac 12 a(b')^2 +\frac 12 b(b')^3 - \frac 12 aa' -
\frac{1}{4} a''b^2 - \frac{1}{4} b^2b' b''\right) (X_s)
ds.
\end{align*}
\end{prop}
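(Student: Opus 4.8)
The plan is to obtain the three-dimensional stable limit directly from Proposition \ref{firstclt}, by rewriting each component of $(M^n, N^n, \sqrt n(C^n - C))$ as a continuous functional of the triplet $L^n = (M^n, A^n(1), A^n(2))$ plus terms that converge in probability to $\mathcal F$-measurable limits. The first component is immediate, since $M^n$ is by definition the first coordinate of $L^n$, whence $M = L^1$. For the other two I would use the second-order expansion \eqref{mtn} supplied by Theorem \ref{th3}, together with two elementary identities valid on each grid interval $[t_{i-1},t_i)$: the Fubini-type relation $\int_{t_{i-1}}^{t_i}(W_s - W_{t_{i-1}})\,ds = \int_{t_{i-1}}^{t_i}(\varphi_n(s+n^{-1})-s)\,dW_s$, and its quadratic analogue $\int_{t_{i-1}}^{t_i}[(W_s - W_{t_{i-1}})^2 - (s - t_{i-1})]\,ds = 2\int_{t_{i-1}}^{t_i}(\varphi_n(s+n^{-1})-s)(W_s - W_{t_{i-1}})\,dW_s$.

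For the third component I would start from $C^n_t = \langle M^n\rangle_t$ and split $(W_s - W_{\varphi_n(s)})^2$ into its compensator $(s - \varphi_n(s))$ and the martingale remainder. The compensator part reproduces $C_t$ (the grid identity $n\int_{t_{i-1}}^{t_i}(s - t_{i-1})\,ds = \tfrac12|I_i|$ makes this exact on the grid), so after multiplication by $\sqrt n$ only discretization errors and the error of replacing $X^n$ by $X$ in the coefficient remain, which I would show are $o_{\mathbb P}(1)$ uniformly in $t$. The martingale remainder, multiplied by $\sqrt n\cdot n$, is turned by the quadratic identity above into precisely $A^n(2)$, again up to grid-freezing errors. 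Hence $\sqrt n(C^n - C) = A^n(2) + o_{\mathbb P}(1)$ and $\widehat C = L^3$.

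The delicate component is $N^n$. Substituting $dR^n_s = R^n_s(1)\,ds + R^n_s(2)\,dW_s$ into \eqref{mtn} and sorting the resulting integrals, the $ds$-terms carrying one factor $(W_s - W_{\varphi_n(s)})$ are converted by the first identity into $dW$-integrals which, together with the $dW$-terms carrying factors $(W_s - W_{\varphi_n(s)})^2$ and $(s-\varphi_n(s))$, reassemble $A^n(1)$; the pure $ds$-terms with $(s-\varphi_n(s))$ and $(W_s - W_{\varphi_n(s)})^2$ factors converge (using that $n(s-\varphi_n(s))$ and $n(W_s - W_{\varphi_n(s)})^2$ both average to $\tfrac12$) to the finite-variation process $A(3)$; and the terms proportional to $(V^n_s)^2$, after inserting $V^n_s = \Sigma_s M^n_s + o_{\mathbb P}(1)$ from Theorem \ref{th2n}, should produce the quadratic drift $\tfrac12\int_0^\cdot \Sigma_s(a''+b''-b'b'')(X_s)(L^1_s)^2\,ds$. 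The hard part will be this last group: the coefficient $a''-b'b''$ sits in front of a genuine drift and passes to the limit by continuous mapping, but the coefficient $b''$ sits in front of the stochastic integral $\tfrac12\int\Sigma_s^{-1}b''(X_s)(V^n_s)^2\,dW_s$, which is a mean-zero martingale; turning its contribution into the stated drift requires the within-bin expansion $M^n_s = M^n_{\varphi_n(s)} - \tfrac{\sqrt n}{2}\Sigma_{\varphi_n(s)}^{-1}bb'(X^n_{\varphi_n(s)})[(W_s - W_{\varphi_n(s)})^2 - (s-\varphi_n(s))]$ and a careful accounting of the Itô corrections created when the continuous-time factor $\Sigma_s^{-1}$ is frozen against the fresh increments. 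I would isolate this term, extract its diagonal contribution, and show that the remaining fluctuations are negligible or absorbed into the martingale part of $L^2$.

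With these reductions established, the conclusion is a soft consequence of Proposition \ref{firstclt}. Since $(M^n, A^n(1), A^n(2)) \stab (L^1, L^2, L^3)$ jointly and stably, and each target coordinate has been written as a continuous functional (built from $\int_0^\cdot$, sums, and the square-and-integrate map) of this triplet, plus the sequences $A^n(3)$ and the $(V^n)^2$-term that converge u.c.p. to the $\mathcal F$-measurable limits $A(3)$ and $\tfrac12\int\Sigma(a''+b''-b'b'')(X)(L^1)^2\,ds$, the continuous mapping theorem for stable convergence — together with the fact that adding a term converging u.c.p. to an $\mathcal F$-measurable limit preserves joint stable convergence — yields $(M^n, N^n, \sqrt n(C^n - C)) \stab (M, N, \widehat C)$, as claimed.
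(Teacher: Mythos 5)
Your route is the paper's route: you reduce everything to Proposition \ref{firstclt} via exactly the same two grid identities (the paper uses $(W_b-W_a)^2-(b-a)=2\int_a^b(W_s-W_a)\,dW_s$ and $\int_a^b(Y_s-Y_a)\,ds=\int_a^b(b-s)\,dY_s$ to get $\sqrt n(C^n-C)=A^n(2)$ and to turn the $(W_s-W_{\varphi_n(s)})\,ds$ terms coming from $R^n$ into the first piece of $A^n(1)$), the same regrouping $N^n=A^n(1)+A^n(3)+\tfrac12\int_0^\cdot\Sigma_s^{-1}(V^n_s)^2(a''+b''-b'b'')(X_s)\,ds+o_{\mathbb P}(1)$, and the same finish by the continuous mapping theorem for stable convergence applied to $(M^n,A^n(1),A^n(2),A^n(3),\Sigma,X)$. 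On those steps your proposal is sound and, if anything, more explicit than the paper about where the various $o_{\mathbb P}(1)$'s come from.

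The one genuine gap is exactly the point you flag but do not resolve: the contribution of $\tfrac{1}{2\sqrt n}b''(X_t)(V^n_t)^2$ sitting in the $dW_t$ part of $R^n$. After multiplication by $\sqrt n\,\Sigma_s^{-1}$ this is the martingale $\tfrac12\int_0^t\Sigma_s^{-1}b''(X_s)(V^n_s)^2\,dW_s$, whose predictable quadratic variation $\tfrac14\int_0^t\Sigma_s^{-2}(b'')^2(X_s)(V^n_s)^4\,ds$ is $O_{\mathbb P}(1)$, not $o_{\mathbb P}(1)$. So it cannot be ``negligible''; it is not one of the three integrals constituting $A^n(1)$, hence not ``absorbed into the martingale part of $L^2$''; and no within-bin expansion of $M^n_s$ will convert a $dW_s$-integral with a non-vanishing bracket into the drift $\tfrac12\int_0^\cdot\Sigma_s b''(X_s)(L^1_s)^2\,ds$ asserted in the statement --- its natural stable limit is the stochastic integral $\tfrac12\int_0^\cdot\Sigma_s b''(X_s)(L^1_s)^2\,dW_s$, a different random variable (already at the level of its $\mathcal F$-conditional mean, which is what feeds the adaptive symbol later). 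You therefore need either to exhibit an explicit cancellation of this term against the other pieces of $\sqrt n\int\Sigma^{-1}(dR^n-b'(X)R^n(2)\,ds)$, or to re-derive the second-order remainder so that the $b''(V^n)^2$ contribution lands in the $dt$ part. Be aware that the paper's own proof gives you no help here: it passes from the definition of $R^n$ to the displayed identity for $N^n_t$ in one line, so this is precisely the step you must actually supply.
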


\begin{proof}
First of all, it holds that $\sup_{t\in [0,1]} |A_t^n(3) -A_t(3)| \toop 0$, which is due to 
\cite[Theorem 7.2.2]{JP}.  Secondly, using the identities 
$(W_b-W_a)^2-(b-a)= 2 \int_a^b (W_s-W_a) dW_s$ and 
$\int_a^b (Y_s-Y_a) ds = \int_a^b (b-s) dY_s$, which hold for any $b>a$ and any continuous semimartingale $Y$, we obtain that 
\[
\sqrt{n}(C^n_t -C_t) = A_t^n(2). 
\]
Furthermore, observing the definition  \eqref{rn} of the process $R^n$, we deduce the identity
\begin{align*}
N^n_t &= \sqrt{n} \int_0^t \Sigma_s^{-1}  \left(dR_s^n - b'(X_s)R_s^n(2) ds \right) + o_{\mathbb P}(1 ) \\[1.5 ex]
&= A_t^n(1) + A_t^n(3) + \frac 12 \int_0^t \Sigma_{s}^{-1} (V_s^n)^2 \left( a''
+ b''-b'b'' \right)(X_s) ds + o_{\mathbb P}(1 ).
\end{align*}
Now, due to convergence \eqref{firststab} in Proposition \ref{firstclt} and the properties of stable 
convergence we deduce that $(M^n, A^n(1), A^n(2),A^n(3), \Sigma, X) \stab (L^1,L^2,L^3, A(3), \Sigma,X)$
on $C([0,1])^6$. Hence, by \cite[Theorem VI.6.22]{JS} and  continuous mapping theorem for stable convergence applied to the function $H: C([0,1])^6 \rightarrow C([0,1])^3$
\[
H(y):= \left(y_1, y_2+y_4 
+  \frac 12 \int_0^{\cdot} y_{5}(s)^{-1} (y_1(s)y_5(s))^2 \left( a''
+ b''-b'b'' \right)(y_6(s)) ds, y_3 \right) 
\]
we obtain  that
\[
(M^n, N^n, \sqrt{n}(C^n -C)) \stab (M,N, \widehat{C}) \qquad \text{on } C([0,1])^3. 
\]  
This completes the proof of Proposition \ref{secondclt}. 
\end{proof}
We remark that the $3$-dimensional limiting process $(M,N, \widehat{C})$ is an $\mathcal{F}$-conditional Gaussian martingale. This property will help us to compute the classical random symbol
$\underline \sigma (z, \iu, \iv)$ 
in the next section.

\subsection{Multivariate Edgeworth expansion associated with the Euler scheme}\label{MultiEdgeworth}

Let us now consider fixed time points $0=T_0<T_1<\ldots<T_k\leq 1$.
In this section we will investigate 
the multivariate Edgeworth expansion for the vector $(V^n_{T_1}, \ldots, V^n_{T_k})$. We recall
the representation introduced at \eqref{mtn}:
\begin{align*}
V_{T_j}^n &= \Sigma_{T_j} (M^n_{T_j} + n^{-1/2} N^n_{T_j} ) 
\qquad \text{with} 
\\[1.5 ex]
M^n_{T_j} &=\Sigma_{T_j} ^{-1} \overline{V}_{T_j} ^n, 
\qquad N^n_{T_j} = \sqrt{n}
\int_0^{T_j}  \Sigma_s^{-1}  \left(dR_s^n - b'(X_s)R_s^n(2) ds \right) 
+ o_{\mathbb P}(1).
\end{align*}

\noindent
According to the Edgeworth expansion theory demonstrated in Section \ref{sec2}, we will first derive the density expansion for the vector 
$(\Sigma_{T_j}, M^n_{T_j} + n^{-1/2} N^n_{T_j})_{1\leq j \leq k}$.

We define the $k$-dimensional 
$(\mathcal F_t)$-martingale with components 
$M^{n,j}:= (M^n_{\min (t, T_j)})_{t\in [0,1]}$, 
which obviously satisfies the terminal condition 
$M^{j,n}_1= M^n_{T_j}$ for $j=1,\ldots,k$.
Similarly, we set $N^{n,j}= N^n_{T_j}$. We introduce the set of 
increasing numbers
$(t_i)_{ 0\leq i \leq m_n}$
via $\{t_i\}=\{j/n:~j=0, \ldots, n\} \cup \{T_1, \ldots, T_k\}$.
In the 
notation of Section \ref{sec2.2} the martingale $M^{n,j}$ satisfies the representation 
\eqref{Mnj} with 
\begin{align} \label{settingw}
K^{n,j}(s) = -\Sigma_{\varphi_n(s)}^{-1} bb'(X^n_{\varphi_n(s)}) 1_{[0, T_j)}(\varphi_n(s))
\quad \text{and} \quad K(s) = -\Sigma_{s}^{-1} bb'(X_{s}). 
\end{align}
The anticipative random symbol $\overline{\sigma}$ is then defined through the identity \eqref{170207-5}. Now, we turn our attention to the adaptive random symbol $\underline{\sigma}$. 

We consider  a $ \overline{q}:=(k+ q)$-dimensional random variable 
\beas\label{180520-1}
G=(\Sigma_{T_1},
\ldots, \Sigma_{T_k}, F),
\eeas
where $F$ is a $q$-dimensional random functional.  
From Proposition \ref{secondclt} we readily deduce the stable convergence
\bea\label{180522-1}
\left(M^{n}, N^n, \sqrt{n}(C^n - C) \right)
\stab \left(M, N, \widehat{C} \right)
\eea
where $M^n=(M^{n,1}, \ldots, M^{n,k})$ and $N^n=(N^{n,1}, \ldots, N^{n,k})$. 
Now, we need to determine the mixed normal representation of the vector 
$ (M, N, \widehat{C})$. Note that the $\mathcal F$-conditional mean of the first and the third component is zero, which is due to   Proposition \ref{secondclt}.  
On the other hand, the $\mathcal F$-conditional mean of $N$ is not vanishing. Observing
the representation of $N$ in Proposition \ref{secondclt} and applying It\^o's formula we conclude
that 
\begin{align*}
\mu_j:= \overline{\E}[N^j| \mathcal F] = \int_0^{T_j} v_s^2 dW_s + A_{T_j}(3) + 
 \frac{1}{2} \int_0^{T_j}
\Sigma_{s} \left(a'' +b''-b'b'' \right)(X_s) \left(\int_0^s u_r^{11} dr \right) ds, 
\end{align*}
where the processes $v^2$, $u^{11}$ and $A(3)$ have been introduced in Propositions \ref{firstclt}
and  \ref{secondclt}. Furthermore, the $\mathcal F$-conditional covariance structure of
the vector $(M, N, \widehat{C})$ is fully determined by Proposition \ref{secondclt}. Thus,
setting $\mu=(\mu_1, \ldots, \mu_k)$, we may write 
\begin{align*}
 (M_1, N, \widehat{C}) \sim MN\left((0,\mu,0), 
 \left(
\begin{matrix}
\Theta_{11} & \Theta_{12} & \Theta_{13}  \\
\Theta_{21} & \Theta_{22} & \Theta_{13} \\
\Theta_{31} & \Theta_{32} & \Theta_{33}
\end{matrix}
\right)
  \right).
\end{align*}
Due to $\mathcal F$-conditional Gaussianity of the limit
$(M_1, N, \widehat{C}),$ the adaptive symbol $\underline{\sigma}$ has the following form: 
\begin{equation} \label{Lsigma}
\underline \sigma (z, \iu,\iv) = \frac{(\Theta_{31} \Theta_{11}^{-1} z)  [(\iu)^{\otimes 2}]}{2}+
(\mu + \Theta_{21} \Theta_{11}^{-1} z)[\iu]       \qquad z,u \in \R^k.
\end{equation}
 Combining two random symbols, we end up with the approximative density 
\begin{eqnarray}\label{20170404-1} &&
p_n^{(Z_n, G)}(z,y,x) = \E [ \phi(z;0,C)|  G=(y,x)] p^{ G}(y,x) 
\nn\\&& 
+n^{-1/2} \sum_j (-d_z)^{m_j} (-d_x)^{n_j}
\Big( \E \left[  c_j(z)\phi(z;0,C)| G=(y,x)\right] p^{ G}(y,x) \Big),
\end{eqnarray}
with $(z,x,y) \in \R^k \times \R^k \times \R^{q}$, as in \eqref{pn}. 

In this setting, however, 
the kernels $K^{n,j}$ and $K$ are defined by (\ref{settingw}), and 
the functionals $c_\alpha(z)$ in the representation (\ref{180520-2}) 
of the full random symbol $\sigma$ 
and also in (\ref{20170404-1}) 
are associated with 
$\underline{\sigma}$ of (\ref{Lsigma}) and 
$\overline{\sigma}$ of (\ref{170207-5}).

In the following we will assume the following condition: 
\bd
\im[(A)] The functions $a$ and $b$ are in $C^\infty(\bbR)$ and 
all their derivatives of positive order are bounded.  
\ed
Under $(A)$ conditions of Theorem \ref{20170206-1} can be slimed down. 
Recall that the variables $\bbI^j_s$ are defined by (\ref{180524-1}). 
\bd
\im[(C1)] For every $p>1$ and $j=1,...,k$, 
\beas
\sup_{s\in(T_{j-1},T_j)}
\left\|\left[\frac{\bbI^j_s}{T_j-s}\right]^{-1}\right\|_{{\mathbb L}^p}  &<& \infty.
\eeas
\ed

Recall that $\ell=k+\overline{q}+8$.  

\bd
\im[(C2)] $F\in\bbD_{\ell+1,\infty}(\bbR^{q})$, 
$\sup_{r_1,...,r_m\in(0,1)}\|D_{r_1,...,r_m}F\|_{\mathbb L^p} <\infty$ for every $p>1$ and 
$m=1,...,\ell+1$. Moreover, $r\mapsto D_r F$ and $(r,s)\mapsto D_{r,s}F$ ($r\leq s$) are continuous a.s. 
\ed

\bd
\im[(C3)]
$\det \sigma_G \in {\mathbb L}_{\infty-}$. 
\ed

Under the aforementioned conditions we obtain the following theorem, which is proved in Section \ref{180522-9}. 
\begin{theo}\label{maintheorem}
Suppose that conditions $(A)$, $(C1)$, $(C2)$ and $(C3)$ are fulfilled. 
Then, for every pair of positive numbers $(K,\gamma)$, 
\beas
\sup_{h\in \mathcal{E}(K,\gamma)}\left| \E[h(Z_n,G)] - \int_{\R^{k}
\times \R^{k} \times \R^{q}} h(z,y,x) p_n^{(Z_n,G)}(z,y,x) dzdydx \right|
&=&
o(n^{-1/2})
\eeas
as $n\to\infty$. 
\end{theo}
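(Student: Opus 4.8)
The plan is to derive Theorem~\ref{maintheorem} from the abstract expansion result Theorem~\ref{20170206-1}, applied with the $\overline q=(k+q)$-dimensional functional $G=(\Sigma_{T_1},\ldots,\Sigma_{T_k},F)$ in the role of the external variable ``$F$'' of Section~\ref{sec2.2}, and with $Z_n=M_n+n^{-1/2}N_n$, where $M^{n,j}$ is the quadratic martingale \eqref{Mnj} determined by the kernels $K^{n,j},K$ of \eqref{settingw}. Since $G$ does not depend on $n$, the associated $\widehat F_n=\sqrt n(G-G)$ vanishes, which is consistent with the absence of an $\iv$-term in the adaptive symbol \eqref{Lsigma}. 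It therefore suffices to verify that conditions $(B1)$, $(B2)$ and $(B3)$ hold under $(A)$, $(C1)$, $(C2)$, $(C3)$; the expansion with density $p_n^{(Z_n,G)}$ of \eqref{20170404-1} then follows at once.

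First I would dispatch the regularity and integrability requirements. Under $(A)$ the coefficients and all their derivatives are smooth and bounded, so $X$, its Euler counterpart $X^n$ and the process $\Sigma$ of \eqref{vexplicit} lie in $\bbD_{\ell+1,\infty}$ with Malliavin derivatives bounded in every $\mathbb L^p$, uniformly in $n$ and in time; as $K^{n,j}=-\Sigma_{\varphi_n}^{-1}bb'(X^n_{\varphi_n})1_{[0,T_j)}(\varphi_n)$ is a smooth function of these, $(B1)(i)$ follows. The rate estimate $(B1)(ii)$, at order $n^{-d}$ with $\frac{1}{3}<d<\frac{1}{2}$, is a consequence of the standard bounds $\|X^n_{\varphi_n(s)}-X_s\|_{\ell,p}+\|\Sigma^{-1}_{\varphi_n(s)}-\Sigma^{-1}_s\|_{\ell,p}=O(n^{-1/2})$. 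Condition $(B1)(iii)$ is literally $(C1)$, whence \eqref{290219-1} and $\det C^{-1}\in\mathbb L_{\infty-}$. For $(B2)$: part $(i)$ is $(C2)$; the uniform $\|\cdot\|_{\ell,p}$-bounds on $\widehat C_n$, $N_n$, $s_n$ in part $(ii)$ again come from $(A)$ together with the explicit representations of Section~\ref{sec3} and Propositions~\ref{firstclt}--\ref{secondclt} (the $\widehat F_n$-term being null here); part $(iii)$ is precisely the stable central limit theorem of Proposition~\ref{secondclt}; and part $(iv)$ holds because the limit $(M,N,\widehat C)$ is $\mathcal F$-conditionally Gaussian, so the conditional expectations given $M_1=z$ are affine resp. quadratic polynomials in $z$ with $\bbD_{\ell_*,\infty}$-coefficients assembled from $\Sigma$, $X$ and the processes $v,u,A(3)$.

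The main obstacle is the non-degeneracy condition $(B3)$, which does not follow from $(C3)$ directly, since it concerns the partially stopped vector $\bbX^j_t=(M^{n,1}_1,\ldots,M^{n,j-1}_1,M^{n,j}_t,G)$ rather than $G$ alone. Here I would construct the truncation $s_n=(s^j_n)$ as a $\bbD_{\ell,\infty}$-smooth lower proxy for the determinant of the martingale block of $\sigma_{\bbX^j_t}$ taken after projecting out the directions $DG$, multiplied by a smooth lower bound for $\det\sigma_G$. The crucial computation is that, for $\theta\in I_i$ with $t_i\leq t\leq T_j$, the Malliavin derivative of $M^{n,j}_t$ is dominated by $\sqrt n\,K^{n,j}(t_{i-1})(W_{t_i}-W_{t_{i-1}})$, so that, modulo lower-order remainders, the martingale coordinates are driven by Brownian increments that fluctuate transversally to the smooth directions $DG$. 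Exploiting that $M^{n,i}_1$ charges only $[0,T_i]$ while $M^{n,j}_t$ with $t\in[\tau^j_n,T_j]$ additionally charges $(T_{j-1},t]$ for a fixed $\tau^j_n\in(T_{j-1},T_j)$ bounded away from $T_j$, one obtains a triangular lower bound whose ``new'' diagonal block for coordinate $j$ concentrates onto $2\bbI^j_{\tau^j_n}=\int_{\tau^j_n}^{T_j}K(r)^2dr$; condition $(C1)$ keeps this block non-degenerate uniformly in $t$ and $n$, while $(C3)$ controls the $G$-block through the Schur complement.

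Choosing the power of $n$ entering $s^j_n$ so as to balance the two clauses of $(B3)$, I expect to establish $\sup_{t\in[\tau^j_n,T_j]}\PP[\det\sigma_{\bbX^j_t}<s^j_n]=O(n^{-\nu})$ with $\nu>\ell/6$ by combining a Malliavin integration-by-parts / Markov estimate with the negative-moment bounds on the inverse blocks furnished by $(C1)$ and $(C3)$, the same bounds yielding $\limsup_n\E[(s^j_n)^{-p}]<\infty$ for $(B3)(ii)$. This uniform control, over $t$ in a left-neighbourhood of $T_j$ and over $n$, of the joint Malliavin matrix of the second-chaos martingale and the smooth functional $G$ is the delicate part of the argument; once it is in place the remaining hypotheses are verified as above, and Theorem~\ref{20170206-1} delivers $\sup_{h\in\mathcal E(K,\gamma)}|\,\E[h(Z_n,G)]-\int h\,p_n^{(Z_n,G)}\,|=o(n^{-1/2})$, as claimed.
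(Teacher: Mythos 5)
Your proposal follows essentially the same route as the paper: reduce to Theorem \ref{20170206-1}, dispose of $(B1)$ and $(B2)$ via $(A)$, $(C1)$, $(C2)$ and Proposition \ref{secondclt} (with $\widehat F_n=0$), and handle $(B3)$ by splitting $D_rM^{n,\mu}$ into its dominant first-chaos part $2K(\theta_{i-1})\eta_i$ (whose pairing with $DG$ is $O_{\mathbb L^p}(n^{-1/2})$) plus an $n^{-1/2}$-remainder, so that a Gram/Schur-complement argument bounds $\det\sigma_{(\bbX_1,\bbX_2)}$ below by $\det(\tilde\sigma_1)\det\sigma_G$, whose limit $\calm_\infty$ has negative moments by $(C1)$ and $(C3)$. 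The only cosmetic difference is that the paper simply takes $s^j_n=\tfrac12\calm_\infty$ (no $n$-dependent power), getting $(B3)$(i) for arbitrary $\nu$ from the $\bbL_{\infty-}$ rate $\calm_n-\calm_\infty=O(n^{-1/2})$ and Chebyshev, rather than tuning $s^j_n$ in $n$.
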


As a consequence of Theorem \ref{maintheorem} we finally obtain the approximative density 
of the pair $(V_n,  F)$ for $V_n=(V_{T_1}^n, \ldots, V_{T_k}^n)$ and an external $q$-dimensional random variable $F$.

\begin{cor} \label{cor4.4}
We set
\begin{align} \label{pnformula}
p^{(V_n, F)}_n (z,x)
= \int_{\R_+^k} \frac{1}{y_1 \cdots y_k} p_n^{(Z_n, G)} \left(z_1/y_1,
\ldots, z_k/y_k, y_1,\ldots, y_k, x  \right) dy.
\end{align}
Under the conditions of Theorem \ref{maintheorem} we obtain that 
\[
\sup_{h\in \mathcal{E}(K,\gamma)}\left| \E[h(V_n, F)] - \int_{\R^{k} 
\times \R^{ q}} h(z,x) p_n^{(V_n, F)}(z,x) dzdx \right|=
o(n^{-1/2}).
\]
\end{cor}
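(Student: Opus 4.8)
The plan is to realise the pair $(V_n,F)$ as a bilinear transformation of the pair $(Z_n,G)$ to which Theorem \ref{maintheorem} already applies, and then to recognise the stated kernel $p_n^{(V_n,F)}$ as the pushforward of $p_n^{(Z_n,G)}$ under that transformation. Recall from \eqref{mtn} and the discussion in Section \ref{MultiEdgeworth} that, by construction of $Z_n$, the components satisfy the exact identity $V_{T_j}^n=\Sigma_{T_j}(Z_n)_j$ for $j=1,\ldots,k$, where $G=(\Sigma_{T_1},\ldots,\Sigma_{T_k},F)$. Hence, writing $y=(y_1,\ldots,y_k)$ for the $\Sigma$-coordinates and $x$ for the $F$-coordinates, the map $\Psi(z,y,x)=(z_1y_1,\ldots,z_ky_k,x)$ satisfies $(V_n,F)=\Psi(Z_n,G)$, so that for any test function $h$ we have $h(V_n,F)=\tilde h(Z_n,G)$ with $\tilde h(z,y,x):=h(z_1y_1,\ldots,z_ky_k,x)$.

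First I would check that $h\in\cale(K,\gamma)$ forces $\tilde h$ into an admissible growth class. Since $|z\odot y|\leq |z|\,|y|\leq\tfrac12(|z|+|y|)^2$, one gets $|\tilde h(z,y,x)|\leq K(1+|z\odot y|+|x|)^\gamma\leq K'(1+|z|+|y|+|x|)^{2\gamma}$, so $\tilde h\in\cale(K',2\gamma)$ with $K'$ depending only on $K,\gamma$. Consequently $\E[h(V_n,F)]=\E[\tilde h(Z_n,G)]$, and Theorem \ref{maintheorem}, applied with $(K',2\gamma)$ in place of $(K,\gamma)$, yields
\[
\sup_{h\in\cale(K,\gamma)}\Big|\E[\tilde h(Z_n,G)]-\int \tilde h(z,y,x)\,p_n^{(Z_n,G)}(z,y,x)\,dz\,dy\,dx\Big|=o(n^{-1/2}),
\]
because the image $\{\tilde h:h\in\cale(K,\gamma)\}$ is contained in $\cale(K',2\gamma)$ and the bound there is uniform.

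It then remains to identify the integral above with $\int h(z,x)\,p_n^{(V_n,F)}(z,x)\,dz\,dx$. Since each $\Sigma_{T_j}=\exp(\cdots)>0$ almost surely, the density $p^{G}$ — and hence every term of $p_n^{(Z_n,G)}$, the $(-d_z)^{m_j}(-d_x)^{n_j}$ derivatives acting only on the $z$- and $x$-variables — is supported on $y\in\R_+^k$, so the $y$-integration effectively runs over $\R_+^k$. For fixed $(y,x)$ with $y\in\R_+^k$ I would substitute $v_j=z_jy_j$ in the inner $z$-integral, whose Jacobian contributes the factor $(y_1\cdots y_k)^{-1}$, and then apply Fubini to integrate out $y$; this produces exactly the kernel \eqref{pnformula} and gives $\int \tilde h\,p_n^{(Z_n,G)}=\int h(v,x)\,p_n^{(V_n,F)}(v,x)\,dv\,dx$. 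Combining the three displays completes the proof.

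The only genuine technical point — and the step I would treat most carefully — is the justification of this change of variables and the interchange of integrations, together with the well-definedness of $p_n^{(V_n,F)}$ itself. The factor $(y_1\cdots y_k)^{-1}$ is singular as any $y_j\downarrow 0$, but in that regime the argument $v_j/y_j$ of the Gaussian factor $\phi(\,\cdot\,;0,C)$ (and of its $z$-derivatives entering the correction term) tends to infinity, so the super-exponential decay of the Gaussian dominates the $y_j^{-1}$ singularity and secures absolute integrability; the polynomial growth of $h$ and the moment bounds underlying $(C1)$–$(C3)$ control the behaviour at large $|y|,|v|,|x|$. Once absolute integrability is in hand, Fubini and the substitution are automatic, and no estimate beyond those already feeding into Theorem \ref{maintheorem} is needed.
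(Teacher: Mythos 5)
Your argument is correct and is precisely the route the paper intends: the paper states Corollary \ref{cor4.4} without a separate proof because it is exactly the pushforward of the expansion of Theorem \ref{maintheorem} under $(z,y,x)\mapsto(z_1y_1,\ldots,z_ky_k,x)$, using $V^n_{T_j}=\Sigma_{T_j}(Z_n)_j$, the inclusion of $\{\tilde h: h\in\cale(K,\gamma)\}$ in some $\cale(K',2\gamma)$, and the change of variables producing the Jacobian factor $(y_1\cdots y_k)^{-1}$ in \eqref{pnformula}. Your treatment of the support of $p^{G}$ in $\R_+^k$ and of the absolute integrability needed for Fubini is the only technical point, and it is handled adequately (one can alternatively reduce it, by the same substitution run backwards, to integrability of $p_n^{(Z_n,G)}$ against polynomial weights, which Theorem \ref{maintheorem} already presupposes).
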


Theorem \ref{maintheorem} relies on 
the non-degeneracy of $G$. 
We will discuss 
some sufficient conditions  in the following subsections. 
When $k\geq2$, the non-degeneracy becomes a global problem 
and it is not so straightforward to consider the question 
in full generality. However, a localization method provides 
a practical solution.

\subsection{On condition $(C1)$}
In this section we will give a sufficient condition for $(C1)$. 
We are working in the setting of Section \ref{MultiEdgeworth} imposing assumption $(A)$. 
We consider the following condition: 

\bd
\im[ (C1$^\sharp$)] {\bf (i)} $\inf_{x\in\bbR}|b(x)|>0$. 
\bd
\im[(ii)] There exists a compact set $B \subseteq \bbR$ 
such that 
\bd\im[(a)] $\inf_{x\in B^c}| b'(x)|>0$,
\im[(b)] 
$\sum_{j=1}^\infty |b^{(j)}(x)|\not=0$ for each $x\in B$. 
\ed
\ed
\ed

For example, in the setting of null drift,  
if $X_t$ visits  the set $\{x:\>b'(x)=0\}$ after some time, 
then $\Sigma_t$ does not diffuse there 
and we never get non-degeneracy of $\Sigma_t$ thereafter. 
This explains the necessity of a global condition like $(C1^\sharp)$(ii)(a). 
As a matter of fact, such a degenerate case is essentially 
in the scope of the classical expansion 
for a martingale with an exactly normal limit (cf. \cite{Yoshida1997}). 
Now we have the following result. 
\begin{prop}
Condition $(C1)$ holds under $(A)$ and $(C1^\sharp)$. 
\end{prop}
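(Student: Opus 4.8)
The plan is to verify the defining inequality of $(C1)$, namely $\sup_{s\in(T_{j-1},T_j)}\big\|[\bbI^j_s/(T_j-s)]^{-1}\big\|_{\mathbb L^p}<\infty$ for every $p>1$ and every $j$, by reducing it to an inverse–moment bound for an occupation functional of the diffusion $X$ and then exploiting the zero structure of $b'$ encoded in $(C1^\sharp)$. Recall from \eqref{settingw} that $K(s)=-\Sigma_s^{-1}bb'(X_s)$, so that $\bbI^j_s/(T_j-s)=\tfrac{1}{2(T_j-s)}\int_s^{T_j}\Sigma_r^{-2}(bb')^2(X_r)\,dr$. First I would strip off the harmless factors: under $(A)$ the coefficients $a',b'$ are bounded, hence $\Sigma^{\pm1}$ has moments of all orders and $\sup_{r\in[0,1]}\Sigma_r^2\in\mathbb L^p$ for every $p$, while $(C1^\sharp)(i)$ gives $b^2\ge\inf_x|b(x)|^2>0$. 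Writing $Z_s=\tfrac{1}{T_j-s}\int_s^{T_j}(b')^2(X_r)\,dr$ for the normalised occupation functional of $(b')^2$, one gets $[\bbI^j_s/(T_j-s)]^{-1}\le 2(\inf|b|)^{-2}(\sup_r\Sigma_r^2)\,Z_s^{-1}$, so by Hölder's inequality it suffices to bound $\sup_{s}\big\|Z_s^{-1}\big\|_{\mathbb L^p}$ for every $p$.

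The second ingredient is a description of the zero set of $b'$. Outside the compact set $B$, condition $(C1^\sharp)(ii)(a)$ gives $\inf_{x\in B^c}(b')^2(x)=:m>0$, so every zero of $b'$ lies in $B$; and $(C1^\sharp)(ii)(b)$, i.e. $\sum_{j\ge1}|b^{(j)}(x)|\ne0$ for $x\in B$, forbids $b'$ from vanishing to infinite order at any point of $B$, since at least one derivative of $b$ is non-zero there. A zero of finite order is isolated and $B$ is compact, so $b'$ has only finitely many zeros $x_1,\dots,x_L\in B$, each of some finite order $\alpha_i\ge1$; consequently there are constants $c,\eta>0$ with $(b')^2(x)\ge c\,|x-x_i|^{2\alpha_i}$ for $|x-x_i|\le\eta$ and $(b')^2(x)\ge c$ whenever $\min_i|x-x_i|\ge\eta$. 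This turns the problem into an inverse–moment estimate for the occupation time of $X$ in shrinking neighbourhoods of the $x_i$.

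The core of the argument is then a small–ball estimate for $Z_s$, uniform in $s\in(T_{j-1},T_j)$. Away from the zeros the integrand is bounded below by $c\wedge m$, so the only danger is an excursion of $X$ near one of the $x_i$; there one localises and uses the ellipticity of $X$ (guaranteed by $\inf|b|>0$), which makes the Malliavin covariance of $X_r$ non-degenerate in $\mathbb{L}_{\infty-}$ and endows $X_r$ with a smooth bounded transition density. Combining the polynomial lower bound $(b')^2(X_r)\ge c|X_r-x_i|^{2\alpha_i}$ with a Gaussian–type bound on the probability that $X$ stays in a ball of radius $\rho$ throughout a time window, one aims to show that $\PP[\,Z_s<\varepsilon\,]$ decays faster than any power of $\varepsilon$, uniformly in $s$, whence $\E[Z_s^{-p}]$ is bounded. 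I expect the decisive obstacle to be precisely this uniformity as $s\uparrow T_j$: when the averaging window $[s,T_j]$ shrinks, the diffusion has almost no time to leave a neighbourhood of a zero it may occupy near time $T_j$, and the time–average $Z_s$ degenerates towards the endpoint value $\tfrac12\Sigma_{T_j}^{-2}(bb')^2(X_{T_j})$. Controlling $\E[Z_s^{-p}]$ in this limit is delicate, because the only help against the vanishing of $(b')^2$ at the $x_i$ is the short excursion of $X$ away from $x_i$ within the window; making the finite order $\alpha_i$ and the diffusion's small–ball occupation estimate interact quantitatively and $s$-uniformly in this regime is, I expect, the crux of the whole proof.
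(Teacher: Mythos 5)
You have set up the right reduction---peeling off $\Sigma_r^{-2}$ and $b^2\geq\inf|b|^2>0$, reducing to inverse moments of the normalised occupation functional, and using $(C1^\sharp)$(ii) to show that $b'$ has only finitely many zeros in $B$, each of finite order, so that $\{bb'(x)\}^2\geq c\min_{z\in\caln}(1\wedge|x-z|^{m})$ for a finite set $\caln$. This is exactly how the paper begins. But your proposal then stops at the decisive step: you write that ``one aims to show'' the uniform small-ball estimate $\sup_{s}\mathbb{P}[Z_s<\varepsilon]=O(\varepsilon^N)$ and you explicitly flag the uniformity as $s\uparrow T_j$ as ``the crux of the whole proof'' without supplying an argument. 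That is a genuine gap, not a proof: everything before it is soft, and the statement of $(C1)$ is precisely this uniform inverse-moment bound.

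For the record, the paper closes the gap as follows. Fix the scale $\varepsilon=n^{-3m/2}$ and use the pointwise identity $\min_z(1\wedge|x-z|^m)=\big(\min_z(1\wedge|x-z|^2)\big)^{m/2}$ together with Jensen's inequality to reduce the order-$m$ zeros to order $2$. Then subdivide $[s,T_j]$ into $n$ equal pieces $[s_{i-1},s_i]$ with $s_i=s+i(T_j-s)/n$ and apply a pigeonhole/union bound: if the average over $[s,T_j]$ is $\leq n^{-3}$, some subinterval has average $\leq n^{-4}$ after renormalisation. On that subinterval the process must come within $n^{-1/2}$ of a single $z\in\caln$, hence (by a modulus-of-continuity estimate, up to an $O(n^{-L})$ exceptional event) stays within $n^{-1/3}$ of that one isolated zero throughout, so one is reduced to the event $\big\{(s_i-s_{i-1})^{-1}\int_{s_{i-1}}^{s_i}|X_r-z|^2dr\leq (cn^3)^{-1},\ X_r\in B_0\big\}$, whose probability is bounded by $c_1^{-1}n\exp(-c_1n)$ uniformly in $s$ via an estimate of the type of Lemma 10.6 in Ikeda--Watanabe, using that $X$ is uniformly elliptic on the bounded set $B_0$. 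Finally the tail bounds are converted into inverse moments by the layer-cake formula
\begin{equation*}
\sup_{s}\E[\Gamma_s^{-p}]\leq\sum_{n=0}^\infty p(n+1)^{3mp/2}\sup_{s}\mathbb{P}[\Gamma_s<n^{-3m/2}]<\infty .
\end{equation*}
Without the subdivision-and-localisation device (or some substitute for it), your outline does not establish $(C1)$; everything you correctly anticipate as ``delicate'' is exactly what remains to be done.
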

\proof 
We need to show that 
\bea\label{20170129-1}
\sup_{s\in(T_{j-1},T_j)}
\left\|\left[\frac{\bbI^j_s}{T_j-s}\right]^{-1}\right\|_{\mathbb L^p} &<& \infty
\eea
for every $p>1$ and $j=1,\ldots,k$. 
%
%
Let $s\in(T_{j-1},T_j)$. Recalling \eqref{180524-1}, we have 
\beas 
\frac{\bbI^j_s}{T_j-s}
 &=& 
\frac{1}{2}\frac{1}{T_j-s}\int_s^{T_j}\Sigma_r^{-2}\{bb'(X_r)\}^2dr
\\&\geq&
\frac{1}{2} \inf_{r\in[s,T_j]}\Sigma_r^{-2}\times \frac{1}{T_j-s} \int_s^{T_j}\{bb'(X_r)\}^2dr. 
\eeas
%
By $(C1^\sharp)$ and the compactness of $B$, 
there exist a 
finite set $\caln\subset B$, 
a positive constant $c$ and 
an integer $m\geq2$ 
such that 
\bea\label{20181111-1} 
\{bb'(x) \}^2 &\geq& \min_{z\in\caln}c^{m/2}(1\wedge |x-z|^m)
\eea
for all $x\in\bbR$. 
{\colorr
Indeed, by $(C1^\sharp)$(i) and (ii)(a), there exists a positive constant $c'$ such that 
$\inf_{x\in B^c}\{bb'(x)\}^2\geq c'$. 
For each $z\in B$, by $(C1^\sharp)$(ii)(b), there exists an integer $j_z\geq1$ such that 
$b^{(j_z)}(z)\not=0$ and 
$b'(x)=((j_z-1)!)^{-1}b^{(j_z)}(z)(x-z)^{j_z-1}+\cdots$ for all $x$ near $z$. 
Therefore, from $(C1^\sharp)$(i), for each $z\in B$, 
there exists 
a positive constant $c_z$ and a neighborhood $B_z$ such that 
$
\{bb'(x)\}^2\geq c_z(1\wedge|x-z|^{m_z}) 
$ 
for all $x\in B_z$, 
with $m_z=(j_z-1)^2\geq0$. 
Since $B$ is compact, one can find a finite set $\caln\subset B$ such that 
$B\subset\cup_{z\in\caln}B_z$, and hence 
$$
\{bb'(x)\}^2
\geq\min_{z\in\caln}\big(\min_{z'\in\caln}c_{z'}\big)\big(1\wedge|x-z|^{\max_{z'\in\caln}m_{z'}}\big)$$ 
for all $x\in B$ 
since there exists $z$ for each $x\in B$ such that $x\in B_z$. 
If we set $c=\big(\min\{c',\min_{z\in\caln}c_z\}\big)^{2/m}$ for 
$m=\max\{2,\max_{z\in\caln}m_z\}$ we obtain (\ref{20181111-1}).

}
Let $\delta>0$ and $B_0:=\{x:~\text{dist }(x,\caln)<2\delta\}$.
Let $s_i=s+i(T_j-s)/n$. 
Then, there exists $n_0\in\bbN$ independent of $s$ such that for $n\geq n_0$, 
\begin{en-text}
\beas &&
\mathbb P\bigg[\frac{1}{T_j-s}\int_s^{T_j}\{bb'(X_r)\}^2dr\leq\frac{1}{n^{3m/2}}\bigg]
\\&\leq&
\sum_{z\in\caln}\mathbb P\bigg[\frac{c^{m/2}}{T_j-s}\int_s^{T_j}(1\wedge |X_r-z|^m)dr
\leq\frac{1}{n^{3m/2}}\bigg]
\\&\leq&
\sum_{z\in\caln}\mathbb P\bigg[\frac{c}{T_j-s}\int_s^{T_j}(1\wedge |X_r-z|^2)dr\leq\frac{1}{n^3}\bigg]
\\&\leq&
\sum_{z\in\caln}\sum_{i=1}^n \mathbb P\bigg[\frac{c}{T_j-s}\int^{s_i}_{s_{i-1}}(1\wedge |X_r-z|^2)dr\leq\frac{1}{n^4}\bigg]
\\&\leq&
\sum_{z\in\caln}\sum_{i=1}^n \mathbb P\bigg[\frac{c}{T_j-s}\int^{s_i}_{s_{i-1}}(1\wedge |X_r-z|^2)dr\leq\frac{1}{n^4}
,\>\inf_{r\in[s_{i-1},s_i]}|X_r-z|<n^{-1/2}\bigg]
\\&\leq&
\sum_{z\in\caln}\sum_{i=1}^n\mathbb P\bigg[\frac{c}{T_j-s}\int^{s_i}_{s_{i-1}}(1\wedge |X_r-z|^2)dr\leq\frac{1}{n^4}
,\>\sup_{r\in[s_{i-1},s_i]}|X_r-z|<n^{-1/3}\bigg]
+O(n^{-L})
\eeas
\end{en-text}
{\colorr
\beas &&
\mathbb P\bigg[\frac{1}{T_j-s}\int_s^{T_j}\{bb'(X_r)\}^2dr\leq\frac{1}{n^{3m/2}}\bigg]
\\&\leq&
\mathbb P\bigg[\frac{c^{m/2}}{T_j-s}\int_s^{T_j}\min_{z\in\caln}(1\wedge |X_r-z|^m)dr
\leq\frac{1}{n^{3m/2}}\bigg]
\\&\leq&
{\mathbb P}\bigg[\frac{c}{T_j-s}\int_s^{T_j}\min_{z\in\caln}(1\wedge |X_r-z|^2)dr\leq\frac{1}{n^3}\bigg]
\\&\leq&
\sum_{i=1}^n \mathbb P\bigg[\frac{c}{T_j-s}\int^{s_i}_{s_{i-1}}\min_{z\in\caln}(1\wedge |X_r-z|^2)dr\leq\frac{1}{n^4}\bigg]
\\&=&
\sum_{i=1}^n \mathbb P\bigg[\frac{c}{T_j-s}\int^{s_i}_{s_{i-1}}\min_{z\in\caln}(1\wedge |X_r-z|^2)dr\leq\frac{1}{n^4}
,\>\inf_{r\in[s_{i-1},s_i]}\min_{z\in\caln}|X_r-z|<n^{-1/2}\bigg]
\\&\leq&
\sum_{z\in\caln}\sum_{i=1}^n\mathbb P\bigg[\frac{c}{T_j-s}\int^{s_i}_{s_{i-1}}(1\wedge |X_r-z|^2)dr\leq\frac{1}{n^4}
,\>\sup_{u\in[s_{i-1},s_i]}|X_r-z|<n^{-1/3}\bigg]
+O(n^{-L})
\eeas
}\noindent
where $L$ is any positive number independent of $s$\noindent
{\colorr; in fact, on the event $\big\{\inf_{r\in[s_{i-1},s_i]}|X_r-z|<n^{-1/2}\big\}$ for $z\in\caln$, 
the process $X$ keeps $\sup_{r'\in[s_{i-1},s_i]}|X_{r'}-z|<n^{-1/3}$ with probability $1-O(n^{-L-1})$, 
and $\min_{z'\in\caln}|X_{r'}-z'|=|X_{r'}-z|$ for $n\geq n_0$ 
since the points in $\caln$ are isolated.} 
The first term of the right-hand side of {\colorr the above} inequality is bounded by 
\beas &&
\sum_{z\in\caln}\sum_{i=1}^n \mathbb P\bigg[\frac{c}{T_j-s}\int^{s_i}_{s_{i-1}}|X_r-z|^2dr\leq\frac{1}{n^4}
,\ X_r\in B_0\>\text{for all } r\in[s_{i-1},s_i]
\bigg]
\\&{\colorr=}&
\sum_{z\in\caln}\sum_{i=1}^n \mathbb P\bigg[\frac{1}{s_i-s_{i-1}}\int^{s_i}_{s_{i-1}}|X_r-z|^2dr
\leq\frac{1}{cn^3}
,\ X_r\in B_0\>\text{for all } r\in[s_{i-1},s_i]\bigg]
\eeas
{\colorr for large $n$.}
Since on the bounded set $B_0$, the process $X_r$ behaves like a Brownian motion, 
the last probability is bounded by 
${\colorr c_1^{-1}}n\exp(-c_1n)$ {\colorr for some positive constant $c_1$ 
independent of $s\in(T_{j-1},T_j)$, which follows from} a similar inequality to \cite[Lemma 10.6]{IkedaWatanabe1989}. 
Consequently, we obtain (\ref{20170129-1}) 
{\colorr by using the estimate 
\beas 
{\sup_{s\in(T_{j-1},T_j)}\mathbb E}[\Gamma_s^{-p}] &=& 
\sup_{s\in(T_{j-1},T_j)}\int_0^\infty pt^{p-1}\bbP[\Gamma_s<t^{-1}]dt
\\&\leq&
\sum_{n=0}^\infty p(n+1)^{3mp/2}\sup_{s\in(T_{j-1},T_j)}\bbP[\Gamma_s<n^{-3m/2}]
\><\>\infty
\eeas
for $\Gamma_s=(T_j-s)^{-1}\int_s^{T_j}\{bb'(X_r)\}^2dr$ and $p>1$.}
\qed

\subsection{On condition $(C3)$ for non-degeneracy of $G$ in the case $k=1$}\label{180527-3}

The problem of non-degeneracy of $\sigma_G$ can be reduced to local properties of the stochastic differential equations in the case $k=1$. 
Consider a system of stochastic differential equations in Stratonovich form
\bea\label{180527-2} 
d\bX_t&=& \bV_0(\bX_t)dt+\bV_1(\bX_t)\circ dW_t,
\qquad \bX_0\yeq(x_0,1,f)
\eea
for a $(2+q)$-dimensional process 
$\bX_t=(\bX^{(j)}_t)_{j=1,2,3}$, where 
$\bV_i=(\bV^{(j)}_i)_{j=1,2,3}$ $(i=0,1)$ are vector fields. 
The elements of $\bV_i$'s are  specified as follows: 
\beas 
\bV^{(1)}_0(\bx) &=& 
\tilde{a}(x_1)\>:=\> a(x_1)-\half b(x_1)b'(x_1),
\qquad
\bV^{(1)}_1(\bx) \>=\>b(x_1),\\
\bV^{(2)}_0(\bx) &=& 
\tilde{a}'(x_1)x_2\>=\> \bigg\{a'(x_1)-\half\big(b''(x_1)b(x_1)-(b'(x_1))^2\big)\bigg\}x_2,
\\
\bV^{(2)}_1(\bx) &=& b'(x_1)x_2.
\eeas
Suppose that the vector fields $\bV^{(3)}_i$ $(i=0,1)$ are smooth and their derivatives of positive order are bounded, and that 
the $q$-dimensional random variable $F$ is represented by the third element of $\bX_T$ as $F=\bX^{(3)}_T$, $T\in(0,1]$. 
In the case $F=0$, $\bX_t$ is $(\bX^{(1)}_t,\bX^{(2)}_t)$ 
and $\bV_i$ are 
$(\bV^{(1)}_i,\bV^{(2)}_i)$ $(i=0,1)$ respectively. 
By definition, $\bX^{(1)}_T=X_T$ and $\bX^{(2)}_T= \Sigma_T$. 

The Lie algebra generated by $\bV_i$ $(i=0,1)$ at $\overline{x}\in\bbR^{2+q}$ is denoted by $\text{Lie}[\bV_1,\bV_0](\overline{x})$, 
namely, it is the linear span of the vectors in $\cup_{i=0}^\infty\calv_i$ with 
$\calv_0=\{\bV_1(\bx)\}$, 
$\calv_i=\{[\bV_j(\bx),{\sf V}];\>{\sf V}\in\calv_{i-1}\}$ $(i\in\bbN)$, 
where $[V, W](x)=\mathcal{D} V(x) W(x)-\mathcal{D} W(x) V(x)$ with $\mathcal{D} V(x)$ being the derivative of $V$ at $x.$
A simple criterion for non-degeneracy of $\sigma_G$ is provided by the H\"ormander condition (see Section 2.3.2 in \cite{N} for details).

\begin{prop}\label{180527-1} 
Let $k=1$. For a constant $\bX_0$, if ${\rm span}\>{\rm Lie}[\bV_1,\bV_0](\bX_0)=\bbR^{2+q}$, then $(C3)$ holds. 
\end{prop}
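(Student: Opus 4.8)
The plan is to read condition $(C3)$ as the non-degeneracy statement $(\det\sigma_G)^{-1}\in\mathbb{L}_{\infty-}$ for a sub-vector of the diffusion $\bX_T$ solving the Stratonovich system \eqref{180527-2}, and to reduce it to the classical H\"ormander criterion applied to the \emph{full} vector $\bX_T$. Since $k=1$ we have $G=(\Sigma_T,F)=(\bX^{(2)}_T,\bX^{(3)}_T)$, which is the projection of $\bX_T=(\bX^{(1)}_T,\bX^{(2)}_T,\bX^{(3)}_T)$ onto its last $1+q$ coordinates (it omits $\bX^{(1)}_T=X_T$). Thus it suffices to bound $\sigma_{\bX_T}$ from below and then pass to the compression $\sigma_G$.

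First I would verify that $\bX_T$ is Malliavin smooth, i.e. $\bX_T\in\bbD_{\infty}(\bbR^{2+q})$ with $\bbD_{\infty}=\cap_{l,p}\bbD_{l,p}$, and that its Malliavin derivatives of every order have moments of all orders. Under $(A)$ the functions $a,b$ are $C^\infty$ with bounded derivatives of positive order, hence globally Lipschitz, so $\bX^{(1)}=X$ has the required regularity and integrability. The component $\bX^{(2)}=\Sigma$ is given explicitly by \eqref{vexplicit} as an exponential whose drift and diffusion coefficients $a'-\tfrac12(b')^2$ and $b'$ are bounded, so $\Sigma_T$ and $\Sigma_T^{-1}$ lie in $\mathbb{L}_{\infty-}$ together with all their Malliavin derivatives; the same holds for $\bX^{(3)}=F$ by the smoothness and boundedness hypotheses on $\bV^{(3)}_i$. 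The linear-in-$x_2$ growth of $\bV^{(2)}_i$ is harmless here because it enters only through the already controlled exponential factor $\Sigma$.

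Next, I would invoke the H\"ormander theorem in the form of \cite[Section 2.3.2]{N}: the hypothesis ${\rm span}\,{\rm Lie}[\bV_1,\bV_0](\bX_0)=\bbR^{2+q}$ at the deterministic starting point $\bX_0$ yields that $\sigma_{\bX_T}$ is a.s. invertible with $(\det\sigma_{\bX_T})^{-1}\in\mathbb{L}_{\infty-}$; equivalently, the smallest eigenvalue $\lambda_{\min}(\sigma_{\bX_T})$ has negative moments of all orders. Finally comes the compression step: writing $P\colon\bbR^{2+q}\to\bbR^{1+q}$ for the coordinate projection onto the last $1+q$ entries, we have $DG=P\,D\bX_T$, hence $\sigma_G=P\,\sigma_{\bX_T}\,P^\star$ with $PP^\star=\text{id}$. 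For any unit vector $\xi\in\bbR^{1+q}$, $\xi^\star\sigma_G\xi=(P^\star\xi)^\star\sigma_{\bX_T}(P^\star\xi)\geq\lambda_{\min}(\sigma_{\bX_T})\|P^\star\xi\|^2=\lambda_{\min}(\sigma_{\bX_T})$, so $\lambda_{\min}(\sigma_G)\geq\lambda_{\min}(\sigma_{\bX_T})$ and therefore $(\det\sigma_G)^{-1}\leq\lambda_{\min}(\sigma_G)^{-(1+q)}\leq\lambda_{\min}(\sigma_{\bX_T})^{-(1+q)}\in\mathbb{L}_{\infty-}$, which is exactly $(C3)$.

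The step I expect to be the main obstacle is the application of the H\"ormander theorem. The standard statement in \cite{N} is phrased for vector fields with bounded derivatives of all orders, whereas $\bV^{(2)}_i$ grows linearly in $x_2$ and $\tilde a'$ may grow in $x_1$. Making the citation rigorous therefore requires either appealing to a version of the theorem that only demands the solution and its derivatives to lie in $\mathbb{L}_{\infty-}$ (which we have verified in the second step), or reworking the Norris-type estimate behind the negative-moment bound directly for this explicit system, exploiting that the problematic growth enters solely through the factor $\Sigma$ that is itself in $\mathbb{L}_{\infty-}$ together with its inverse. The compression inequality, by contrast, is elementary and purely linear-algebraic, and the Malliavin regularity in the second step is routine given $(A)$.
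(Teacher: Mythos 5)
Your argument is correct and is essentially the one the paper intends: the paper gives no written proof of this proposition, merely pointing to the H\"ormander criterion of \cite[Section 2.3.2]{N}, and your route --- H\"ormander's theorem applied to the full vector $\bX_T$ at the deterministic initial point to get $(\det\sigma_{\bX_T})^{-1}\in\mathbb{L}_{\infty-}$, followed by the principal-submatrix inequality $\lambda_{\min}(\sigma_G)\geq\lambda_{\min}(\sigma_{\bX_T})$ for the projection $G=(\Sigma_T,F)$ of $\bX_T$ --- is exactly how that citation is meant to be cashed out (reading $(C3)$, as you do, as $(\det\sigma_G)^{-1}\in\mathbb{L}_{\infty-}$). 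You also correctly identify the one genuine subtlety, namely that the linear $x_2$-factor in $\bV^{(2)}_i$ puts the system outside the bounded-derivative hypotheses of the textbook statement, and your proposed fix (Norris-type estimates combined with the all-order integrability of $\Sigma_T$ and $\Sigma_T^{-1}$) is the standard and adequate remedy.
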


A variation is the case where 
$F$ has a component $X_T$, that is, $F=(X_T,F_1)$; 
$F_1$ may be empty. 
If we have a representation $F_1=\bX^{(3)}_T$, then 
Proposition \ref{180527-1} remains valid.

The non-degeneracy problem for $\sigma_G$ becomes a global one when $k>1$ 
since we need non-degeneracy of $\Sigma_{T_2}-\Sigma_{T_1}$, but 
the support of $\Sigma_{T_1}$ is no longer compact. 
Though we could assume some strong condition that gives uniform non-degeneracy 
over the whole space, it would be a quite restrictive solution. 
Instead, in Section \ref{20180527-5}, we will consider a different way 
by slightly modifying Theorem \ref{maintheorem}, but 
such modification keeps the error bound of the approximation meaningful in practice.

\subsection{Localization}\label{20180527-5}
\begin{en-text}
Let us consider the situation of Section \ref{180527-3} with the system (\ref{180527-2}) of 
stochastic differential equations. 
For simplicity of presentation, we will only treat the following two extreme cases 
for a $k$-dimensional $F_1=(F_1^{(j)})_{j=1,...,k}$, while 
it is obvious to treat more general cases in the same manner. 

\noindent{\bf Case 1}. $F_1^{(j)}=X_{T_j}$ for $j=1,...,k$. 
In this situation, $\bX_t=(\bX^{(1)}_t,\bX^{(2)}_t)$ without the third component. 

\noindent{\bf Case 2}. $F_1^{(j)}=(\bX^{(3)}_{T_j})$ for $j=1,...,k$. 

In both cases, the variational process $\bSigma_t$ is defined by 
the stochastic differential equation 
\beas
d\bSigma_t &=& 
\partial_{\bx}\bV_0(\bX_t)\bSigma_tdt+\partial_{\bx}\bV_1(\bX_t)\bSigma_t\circ dW_t,
\qquad
\bSigma_0 \yeq I_*
\eeas
where $I_*$ is the unit matrix of order $\text{dim}(\bX_t)$. 
\end{en-text}
To convey the idea simply, we shall only treat the case 
$F=(X_{T_j})_{j=1,...,k}$, while more general cases can be formulated 
in a similar manner. 

Let us consider the situation of Section \ref{180527-3} with the system (\ref{180527-2}) of 
stochastic differential equations 
for $\bX_t=(\bX^{(1)}_t,\bX^{(2)}_t)=(X_t,\Sigma_t)$. 
\begin{en-text}
\bd
\im[(D1)] There is a finite closed interval $I$ in $\bbR$ such that $x_0\in I^o$ and that 
the following conditions hold. 
\bd
\im[(i)] $\inf_{x\in I}|b(x)|>0$. 
\im[(ii)] 
$\sum_{j=1}^\infty |b^{(j)}(x)|\not=0$ for each $x\in I$. 
\ed
\ed
\bd
\im[(D2)] ${\rm span}\>{\rm Lie}[\bV_1,\bV_0](x,1)=\bbR^2$ for $x\in I$. 
\ed
\end{en-text}
\bd
\im[(D)] $
{\rm Lie}[\bV_1,\bV_0](x,1)=\bbR^2$ for $x\in I$. 
\ed

For positive numbers $K$ and $\gamma$, 
let 
$\cale(K,\gamma,I)$
be the set of measurable functions 
$h:\bbR^{3k}\to\bbR$ 
such that $h(z,y,x)=0$ when 
$x_j\in I^c$ for some $j\in\{1,...,k-1\}$, 
$x=(x_j)_{j=1,...,k}$, and that 
$|h(z,y,x)|\leq M(1+|z|+|y|+|x|)^\gamma$ for all $(z,y,x)\in\bbR^{3k}$. 

Denote by $(X_t(s,x),\Sigma_t(s,(x,y)))$ 
the stochastic flow defined by 
\beas 
\left\{\begin{array}{ccl}
dX_t(s,x) &=& \tilde{a}(X_t(s,x))dt+b(X_t(s,x))\circ dW_t,\vspace{2mm}\\
d\Sigma_t(s,(x,y)) &=& \tilde{a}'(X_t(s,x))\Sigma_t(s,(x,y)) dt+b'(X_t(s,x))\Sigma_t(s,(x,y)) \circ dW_t
\end{array}\right.
\eeas
with $(X_s(s,x),\Sigma_s(s,(x,y)))=(x,y)$, $0\leq s\leq t\leq1$. 
Assume conditions $(A)$, $(C1)$ and $(D)$. 
Then by Proposition \ref{180527-1} and Theorem \ref{maintheorem}, 
for each $x_{j-1}\in I$ and $y_{j-1}>0$, 
there exists a density 
\beas  
q_n^{(j)}
\big(\zeta_j,\eta_j,x_j|y_{j-1},x_{j-1}\big)
&=&
p_n^{\big(V^n_{T_j}-V^n_{T_{j-1}},y_{j-1}^{-1}\Sigma_{T_j}(T_{j-1},(x_{j-1},y_{j-1})),
X_{T_j}(T_{j-1},x_{j-1})\big)}
\big(\zeta_j,\eta_j,x_j\big)
\eeas
with initial value $(\Sigma_{T_{j-1}},X_{T_{j-1}})=(y_{j-1},x_{j-1})$ 
of the system starting at time $T_{j-1}$ 
that gives the asymptotic expansion 
\beas&&
\E\big[h_j\big(V^n_{T_j}-V^n_{T_{j-1}},y_{j-1}^{-1}\Sigma_{T_j},X_{T_j}\big)
\big| \Sigma_{T_{j-1}}=y_{j-1},\>
X_{T_{j-1}}=x_{j-1}\big]
\\&&
-\int_{\bbR^3}h_j(\zeta_j,\eta_j,x_j)q_n^{(j)}
\big(\zeta_j,\eta_j,x_j|y_{j-1},x_{j-1}\big)
d\zeta_jd\eta_jdx_j
\\&=& 
o(n^{-1/2})
\eeas
uniformly in $h_j\in\cale(K,\gamma)$ for every $(K,\gamma)\in(0,\infty)^2$. 
Indeed, $q_n^{(j)}\big(\zeta_j,\eta_j,x_j|y_{j-1},x_{j-1}\big)$ is 
the density $p_n(\zeta_j,\eta_j,x_j)$ in the one-step case starting from time $T_{j-1}$ 
and 
the initial values $X_0=x_{j-1}\in I$ and $\Sigma_0=1$. 
Then we obtain a function $q_n^{(Z_n,G)}(z,y,x)$ that approximates the distribution of $(Z_n,G)$ with 
$G=((\Sigma_{T_j})_{j=1,...,k},(X_{T_j})_{j=1,...,k})$: 
\beas 
q_n^{(Z_n,G)}(z,y,x)
&=& 
\prod_{j=1}^k q_n^{(j)}\big(z_j-z_{j-1},y_{j-1}^{-1}y_j,x_j\big|y_{j-1},x_{j-1}\big)y_{j-1}^{-1}
\eeas
for $(z,y,x)=\big((z_j)_{j=1,...,k},(y_j)_{j=1,...,k},(x_j)_{j=1,...,k}\big)$, 
$(z_0,y_0)=(0,1)$. 
We should remark that this function is defined 
only when $x_{j-1}\in I$ for $j=1,...,k$. 
Now we give a localized version of Theorem \ref{maintheorem}. 
\begin{theo}\label{maintheorem2}
Suppose that Conditions $(A)$, $(C1)$ and $(D)$ are fulfilled for some finite closed interval $I$. 
Let $G=((\Sigma_{T_j})_{j=1,...,k},(X_{T_j})_{j=1,...,k})$. 
Then, for every pair of positive numbers $(K,\gamma)$, 
\beas
\sup_{h\in \mathcal{E}
(K,\gamma,I)}\left| \E[h(Z_n,G)] - \int_{\R^{k}
\times \R^{k} \times \R^{k}} h(z,y,x) q_n^{(Z_n,G)}(z,y,x) dzdydx \right|
&=&
o(n^{-1/2})
\eeas
as $n\to\infty$. 
\end{theo}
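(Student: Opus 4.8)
The plan is to exploit the Markov (flow) structure of the pair $(X_t,\Sigma_t)$ together with the affine recursion satisfied by the normalised error, so as to reduce the $k$-dimensional expansion to a chain of $k$ one-step expansions, each governed by Theorem \ref{maintheorem} in the case $k=1$. First I would record the flow identity $\Sigma_{T_j}=\Sigma_{T_{j-1}}\widetilde\Sigma^{(j)}$, where $\widetilde\Sigma^{(j)}=y_{j-1}^{-1}\Sigma_{T_j}$ is the variational process restarted at time $T_{j-1}$ from the value $1$ at the point $X_{T_{j-1}}$. Combined with the explicit representations of Section \ref{sec3}, this yields an affine recursion for $V^n_{T_j}$ in terms of $V^n_{T_{j-1}}$, the fresh multiplier $\widetilde\Sigma^{(j)}$, and a fresh functional of the Brownian increments on $[T_{j-1},T_j]$ determined by $X_{T_{j-1}}$ alone. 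Consequently the joint law of $(Z_n,G)$ factorises, through successive conditioning on $\mathcal F_{T_1},\dots,\mathcal F_{T_{k-1}}$, into a chain of block transition laws; after passing to the increment/ratio coordinates $(z_j-z_{j-1},\,y_{j-1}^{-1}y_j,\,x_j)$ each block reduces to a one-step problem restarted from $(x_{j-1},1)$, whose law is exactly what $q_n^{(j)}(\cdot\,|\,y_{j-1},x_{j-1})$ approximates, the factor $y_{j-1}^{-1}$ being the Jacobian of the ratio substitution.

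Next I would verify that each one-step problem satisfies the hypotheses of Theorem \ref{maintheorem} with $k=1$, started at time $T_{j-1}$ from $X_{T_{j-1}}=x_{j-1}$ and $\Sigma=1$. Conditions $(A)$ and $(C1)$ are inherited directly; the only nontrivial point is the non-degeneracy $(C3)$ of the one-step Gaussian vector. Here the support restriction built into $\cale(K,\gamma,I)$ forces $x_{j-1}\in I$, and condition $(D)$ together with Proposition \ref{180527-1} yields ${\rm span}\,{\rm Lie}[\bV_1,\bV_0](x_{j-1},1)=\bbR^2$, hence $\det\sigma_G\in\bbL_{\infty-}$ for the restarted vector. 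Since $I$ is compact and the coefficients are smooth with bounded derivatives of positive order, the expansion of Theorem \ref{maintheorem} holds uniformly over initial data $(x_{j-1},1)$ with $x_{j-1}\in I$; this uniformity is precisely what licenses the composition of the one-step densities. The resulting product $\prod_{j=1}^k q_n^{(j)}(z_j-z_{j-1},\,y_{j-1}^{-1}y_j,\,x_j\,|\,y_{j-1},x_{j-1})\,y_{j-1}^{-1}$ is then $q_n^{(Z_n,G)}$ by construction.

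With these ingredients in place I would establish the error bound by a telescoping (hybrid) argument. Writing $\E[h(Z_n,G)]$ as an iterated conditional expectation over the blocks $[T_{j-1},T_j]$, I would replace, one block at a time, the exact conditional law by its $q_n^{(j)}$-approximation. At the $j$-th stage the effective test function for the one-step problem is obtained by integrating $h$ against the exact transition laws of the earlier blocks and the already-substituted densities of the later blocks; one checks that this composite function again lies in some $\cale(K',\gamma')$ with constants bounded uniformly in $n$, using the polynomial-growth bound on $h$, the compact support in the intermediate coordinates $x_1,\dots,x_{k-1}$, and the uniform $\bbL_p$-bounds on $\Sigma_{T_j}^{\pm1}$ and on the ratio variables that follow from $(A)$. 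Each substitution then incurs an error $o(n^{-1/2})$ uniformly, and since there are only $k$ (a fixed number of) blocks, the total error is $k\cdot o(n^{-1/2})=o(n^{-1/2})$.

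The hard part will be the bookkeeping in this telescoping step: verifying that the composite one-step test functions remain in the admissible class $\cale(K',\gamma')$ with constants uniform in $n$ and in the intermediate integration variables. Two features make this delicate. The multiplicative Jacobian factors $y_{j-1}^{-1}=\Sigma_{T_{j-1}}^{-1}$ are unbounded, so moment control on $\Sigma^{\pm1}$ must be combined with the polynomial-growth structure of $\cale$ to keep the effective growth exponent finite; and the non-degeneracy, hence the very definition of $q_n^{(j)}$, is available only for $x_{j-1}\in I$, which is exactly why the compact support of $h$ in the coordinates $x_1,\dots,x_{k-1}$ is indispensable. Once uniformity of the one-step expansion over the compact set $I$ is secured, the composition of the blocks is routine.
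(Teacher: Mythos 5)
Your proposal follows essentially the same route as the paper's own (very brief) proof sketch: restart the flow $(X,\Sigma)$ at each $T_{j-1}$ from $(x_{j-1},1)$ with $x_{j-1}\in I$, invoke the one-step case of Theorem \ref{maintheorem} (non-degeneracy supplied by $(D)$ via Proposition \ref{180527-1}), bound $h$ by a product $M_1\prod_j(1+|z_j|+|y_j|+|x_j|)^{\gamma_1}$, and telescope the $k$ block substitutions. Your write-up is in fact more explicit than the paper's about the two points it leaves implicit --- uniformity of the one-step expansion over initial data in the compact set $I$, and control of the composite test functions and the Jacobian factors $y_{j-1}^{-1}$ --- so no correction is needed.
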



For a sketch of the proof of Theorem \ref{maintheorem2}, we notice that 
the function $h$ admits the estimate 
\beas 
|h(z,y,x)| &\leq& 
M_1\prod_{j=1}^k\big(1+|z_j|+|y_j|+|x_j|)^{\gamma_1}
\eeas
for some $(M_1,\gamma_1)\in(0,\infty)^2$. 
Then repeated use of the approximation yields the desired error bound. 

The asymptotic expansion for $(V_n,(X_{T_j})_{j=1,...,k})$ as in Corollary \ref{cor4.4} 
also follows under  conditions of Theorem \ref{maintheorem2}. 


\begin{en-text}
\koko
The variational process $\bSigma_t$ is defined by 
the stochastic differential equation 
\beas
d\bSigma_t &=& 
\partial_{\bx}\bV_0(\bX_t)\bSigma_tdt+\partial_{\bx}\bV_1(\bX_t)\bSigma_t\circ dW_t,
\qquad
\bSigma_0 \yeq I_{2}
\eeas
where $I_{2}$ is the unit matrix of order $2$.

\beas 
\bV_0(x_1,x_2,x_3)&=&
\left[
\begin{array}{c} \tilde{a}(x_1)
\end{array}
\right]
\eeas

\end{en-text}
\begin{en-text}
{\colorr 
\begin{remark}\rm to the authors. 
What we obtained is only in the case $F'=\emptyset$ i.e. $q=0$. 
``$F'$'' is used in the following theorem and the corollary. 
\end{remark}
}
\end{en-text}

\section{Applications} \label{sec6}
\setcounter{equation}{0}
\renewcommand{\theequation}{\thesection.\arabic{equation}}

\subsection{Strong and weak error expansions} \label{sec6.1}

As the first application of the density expansion introduced in  \eqref{pnformula} we study the strong 
and the weak approximation error associated with the Euler approximation scheme.

\begin{prop} \label{prop5.1}
(Weak and strong approximation errors) 
Suppose that  conditions of Theorem \ref{maintheorem} are satisfied.
\begin{itemize} 
\item[\textnormal{(i)}] (Strong approximation error) Let $p_n^{V_n}(z)$ be the marginal density obtained from $p^{(V_n,F)}_n (z,x)$,
defined at \eqref{pnformula}, by projection
onto the first component and let $U_n=(X_{T_1}^n,\ldots, X_{T_k}^n)-(X_{T_1},\ldots, X_{T_k}).$ 
Then we obtain the following expansion for the $L^p$-norm of the approximation error
\[
\E[\| U_n \|^p]^{1/p} = n^{-1/2} \left(\int_{\R^k} \|z\|^p p_n^{V_n}(z) dz \right)^{1/p} + o(n^{-1/2}).
\]
\item[\textnormal{(ii)}] (Weak approximation error)
Consider a function $f \in C^2(\R^k)$ such that the second derivative of $f$ has polynomial growth.
  Setting $p^{(V_n,F)}_n (z,x)= p_1 (z,x) + n^{-1/2} p_2 (z,x)$
we deduce the asymptotic expansion
\begin{align*}
&\E[f(X_{T_1}^n,\ldots, X_{T_k}^n) - f(X_{T_1},\ldots, X_{T_k})] \\[1.5 ex]
&=n^{-1} \int_{\R^k \times \R^k} \left(\langle \nabla f (x), z \rangle
\cdot p_2 (z,x) + \frac 12 z^{\star}\text{Hess}f(x) z \cdot p_1 (z,x) \right) 
dzdx + o(n^{-1}).
\end{align*} 
\end{itemize}
\end{prop}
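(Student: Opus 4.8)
The plan is to derive both expansions from the density approximation of Corollary \ref{cor4.4} by inserting suitable test functions and absorbing the nonlinearities of $x\mapsto\|x\|^p$ and of $f$ into Taylor remainders.

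For (i), I would start from the identity $U_n=n^{-1/2}V_n$, so that $\E[\|U_n\|^p]=n^{-p/2}\E[\|V_n\|^p]$. The function $h(z,x)=\|z\|^p$ has polynomial growth and hence lies in $\cale(K,\gamma)$ with $\gamma=p$ and $K$ large; applying Corollary \ref{cor4.4} and integrating the density over the $F$-coordinate $x$ gives
\[
\E[\|V_n\|^p]=\int_{\R^k}\|z\|^p p_n^{V_n}(z)\,dz+o(n^{-1/2}),
\]
with $p_n^{V_n}(z)=\int p_n^{(V_n,F)}(z,x)\,dx$. Writing $J_n:=\int_{\R^k}\|z\|^p p_n^{V_n}(z)\,dz$, I would observe that $J_n=J_\infty+O(n^{-1/2})$, where $J_\infty$ is the $p$-th absolute moment of the non-degenerate mixed normal limit $V$, so $J_n$ is bounded away from $0$ and $\infty$. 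Taking $p$-th roots and linearizing, $(J_n+o(n^{-1/2}))^{1/p}=J_n^{1/p}+o(n^{-1/2})$, yields
\[
\E[\|U_n\|^p]^{1/p}=n^{-1/2}\bigl(J_n+o(n^{-1/2})\bigr)^{1/p}=n^{-1/2}J_n^{1/p}+o(n^{-1}),
\]
which is the claim since $o(n^{-1})\subset o(n^{-1/2})$.

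For (ii), with $F=(X_{T_1},\ldots,X_{T_k})$ playing the role of the external variable $x$, I would Taylor-expand to second order with integral remainder,
\[
f(X^n)-f(X)=\langle\nabla f(X),U_n\rangle+\tfrac12\,U_n^\star\text{Hess}f(X)U_n+R_n,
\]
where $U_n=n^{-1/2}V_n$ and $R_n=\int_0^1(1-\theta)\,U_n^\star[\text{Hess}f(X+\theta U_n)-\text{Hess}f(X)]U_n\,d\theta$. The two leading terms are $n$-independent test functions $\langle\nabla f(x),z\rangle$ and $z^\star\text{Hess}f(x)z$, both in $\cale(K,\gamma)$ because $\nabla f$ and $\text{Hess}f$ have polynomial growth. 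Applying Corollary \ref{cor4.4} to the linear functional and inserting $p_n^{(V_n,F)}=p_1+n^{-1/2}p_2$, I would use that the leading density $p_1(z,x)=\E[\phi(z;0,C)|F=x]p^F(x)$ has vanishing conditional mean, $\int z\,p_1(z,x)\,dz=0$ (since the limit $V$ is $\mathcal F$-conditionally centred), so that
\[
n^{-1/2}\E[\langle\nabla f(F),V_n\rangle]=n^{-1}\int\langle\nabla f(x),z\rangle p_2(z,x)\,dz\,dx+o(n^{-1}).
\]
Applying the corollary to the quadratic functional gives
\[
\tfrac{1}{2n}\E[V_n^\star\text{Hess}f(F)V_n]=\tfrac{1}{2n}\int z^\star\text{Hess}f(x)z\,p_1(z,x)\,dz\,dx+o(n^{-1}),
\]
the $n^{-1/2}p_2$-contribution being absorbed into $o(n^{-1})$. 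Summing the two reproduces the stated $n^{-1}$-expansion, provided $\E[R_n]=o(n^{-1})$.

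The main obstacle is precisely this remainder estimate, which is where the mere $C^2$-smoothness of $f$ is delicate. I would bound $n|R_n|\leq\tfrac12\|V_n\|^2\,\omega_n$ with $\omega_n:=\sup_{\theta\in[0,1]}\|\text{Hess}f(X+\theta U_n)-\text{Hess}f(X)\|$. Since $U_n\toop 0$ and $\text{Hess}f$ is continuous, $\omega_n\toop 0$, while the polynomial growth of $\text{Hess}f$ together with the uniform moment bounds $\sup_n\E[\|V_n\|^q]<\infty$ and $\sup_n\E[\|X^n\|^q]<\infty$ (valid under $(A)$ by the standard $\mathbb L^q$-estimates for the Euler scheme, cf. \cite{KP}) give uniform integrability of $\omega_n$. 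A H\"older split $\E[\|V_n\|^2\omega_n]\leq\E[\|V_n\|^{2r}]^{1/r}\E[\omega_n^{r'}]^{1/r'}$ then forces $\E[n|R_n|]\to0$, i.e. $\E[R_n]=o(n^{-1})$, which closes (ii). The same uniform-moment input also legitimizes inserting the polynomially growing test functions into Corollary \ref{cor4.4} and the $p$-th root linearization in (i).
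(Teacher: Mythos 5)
Your proposal is correct and follows essentially the same route as the paper: part (i) is Corollary \ref{cor4.4} applied to $h(z)=\|z\|^p$ followed by taking $p$-th roots, and part (ii) is a second-order Taylor expansion whose linear term against $p_1$ vanishes by the odd symmetry of $z\mapsto \E[\phi(z;0,C)|F=x]$, with the Hessian remainder shown to be $o(n^{-1})$. Your treatment is in fact somewhat more explicit than the paper's (the uniform-integrability/H\"older argument for the remainder and the justification of the $p$-th root linearization are only implicit there), but no new idea is involved.
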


\begin{proof} 
Part (i) of the statement is a direct consequence of Corollary \ref{cor4.4} applied to the function 
$h(z)=\|z\|^p$. Now, we set $\bold{X}^n=(X_{T_1}^n,\ldots, X_{T_k}^n)$ and 
$\bold{X}=(X_{T_1},\ldots, X_{T_k})$. 
To obtain part (ii) of Proposition \ref{prop5.1} we apply Taylor expansion to conclude
that
\begin{align*}
f(\bold{X}^n) - f(\bold{X}) &=  \langle \nabla f (\bold{X}), \bold{X}^n - \bold{X} \rangle 
+ \frac{1}{2} (\bold{X}^n - \bold{X})^{\star}\text{Hess}f(\bold{X}) (\bold{X}^n - \bold{X}) \\
& +  \frac{1}{2} (\bold{X}^n - \bold{X})^{\star} \left(\text{Hess}f(\bold{Y}^n) - \text{Hess}f(\bold{X})
\right) (\bold{X}^n - \bold{X}),
\end{align*}
for some random vector $\bold{Y}^n \in \R^k$ with $\| \bold{Y}^n - \bold{X}\| 
\leq \|\bold{X}^n - \bold{X}\|$. In particular, $\bold{Y}^n \toop \bold{X}$. 
Observe that 
\[
\E\left[(\bold{X}^n - \bold{X})^{\star} \left(\text{Hess}f(\bold{Y}^n) - \text{Hess}f(\bold{X})
\right) (\bold{X}^n - \bold{X})\right] \to 0 \qquad \text{as } n\to \infty,
\]
which is due to $f \in C^2(\R^k)$.
We deduce the expansion 
\begin{align*}
&\E[f(X_{T_1}^n,\ldots, X_{T_k}^n) - f(X_{T_1},\ldots, X_{T_k})] \\[1.5 ex]
&=n^{-1} \int_{\R^k \times \R^k} \left(\langle \nabla f (x), z \rangle
\cdot p_2 (z,x) + \frac 12 z^{\star}\text{Hess}f(x) z \cdot p_1 (z,x) \right) 
dzdx + o(n^{-1})
\end{align*} 
since, according to Theorem \ref{maintheorem} and Corollary \ref{cor4.4} applied to 
$F=(X_{T_1}, \ldots, X_{T_k})$, it holds that 
\[
\int_{\R^k \times \R^k} \langle \nabla f (x), z \rangle
\cdot p_1 (z,x) dzdx = 0,
\]
because the $dz$-integral is taking over an odd function in $z$. This completes the proof 
of Proposition \ref{prop5.1}. 
\end{proof}

We remark that the weak error expansion of Proposition \ref{prop5.1}(ii) has been obtained in
\cite{BT1, BT2} for $k=1$ and the discrete Euler scheme. Furthermore, the authors proved that 
the error of the expansion in Proposition \ref{prop5.1}(ii) is $O(n^{-2})$, which is more precise than
$o(n^{-1})$.   We note however that the theory developed in \cite{BT1, BT2} is not sufficient to obtain 
the density expansion  \eqref{pnformula} of Corollary \ref{cor4.4}.

\subsection{Studentized statistics} \label{sec6.2}
In this part  we will apply results of Section \ref{MultiEdgeworth} to derive the density of the studentized statistic. To avoid complex notations, we restrict our attention to the case $k=1$. 

To this end, let $T \in [0,1].$ We note that $V_T^n=\Sigma_T Z_T^n$ and $V_T \sim MN(0, S_T)$ with $S_T= \Sigma_T^2 C_T.$
Then, the studentized statistic is 
\begin{equation}
\frac{V_T^n}{\sqrt{S_T}}=\frac{Z_T^n}{\sqrt{C_T}}
\end{equation}
Hence, it suffices to derive the density of the studentized statistic $Z_T^n/\sqrt{C_T}.$

We write \eqref{Lsigma} in the form
\begin{equation*}
\underline \sigma (z, \iu,\iv) = \mathcal{H}_1 z (\iu)^2+(\mathcal{H}_2+\mathcal{H}_3 z) (\iu)
\end{equation*}
Moreover, we restructure \eqref{170207-5} 
as
\begin{align} \label{Usigma}
 \overline{\sigma}(\iu,\iv)=& \mathcal{H}_4 (\iu)^3 + \mathcal{H}_5 (\iu) (\iv) 
+\mathcal{H}_6 (\iu)^5 + \mathcal{H}_7(\iu)(\iv)^2+ \mathcal{H}_8(\iu)^3 (\iv).
\end{align}
Adding these two random symbols, we obtain the full random symbol
\begin{align}\label{SumRandomSymbols}
\sigma (z, \iu,\iv)=\sum_{j=1}^7 c_j(z) (\iu)^{m_j} (\iv)^{n_j}.
\end{align}
where the components of $(m,n)=\left((m_j, n_j) \right)_{1 \leq j \leq 7}$ and 
$c(z)=(c_j(z))_{1 \leq j \leq 7}$ are given by
$$(m,n)=\left( (1,0 ), (2,0), (1,1), (3,0), (1,2), (3,1), (5,0)   \right)$$
and
$$c(z)=\left( \mathcal{H}_2+\mathcal{H}_3 z,  \mathcal{H}_1 z, \mathcal{H}_5, \mathcal{H}_4, \mathcal{H}_7, \mathcal{H}_8, \mathcal{H}_6  \right).$$
In view of Theorem \ref{maintheorem} and denoting $C=C_T, Z_n=Z_T^n,$  we obtain that
$$p_n^{(Z_n,C)}(z,x)=\phi(z;0,x) p^{C}(x)+n^{-1/2} \sum_{j=1}^{8} p_j(z,x)$$
where for each $j$ we have
$$p_j(z, x)=(-d_z)^{m_j} (-d_x)^{n_j}
\Big( \phi(z;0,x) p^{C}(x) \E \left[  c_j(z) |C=x\right]\Big).$$
Note that, in this case, most of the terms are the same as in \cite[Section 6]{PY}. Hence, adopting their derivations, we easily obtain the following identities: 
\begin{align*}
\int_{\R^2} g(z/\sqrt{x}) p_1(z, x) dz dx&=\E[\mathcal{H}_2 C^{-1/2}] \int_{\R} g(y) y \phi(y;0,1) dy \\
     &+\E[\mathcal{H}_3] \int_{\R} g(y) (y^2-1) \phi(y;0,1) dy,   \\
\int_{\R^2} g(z/\sqrt{x}) p_2(z, x) dz dx&=\E[\mathcal{H}_1 C^{-1/2}] \int_{\R} g(y) H_3(y) \phi(y;0,1) dy,   \\
\int_{\R^2} g(z/\sqrt{x}) p_3(z, x) dz dx&=\frac{-1}{2} \E[\mathcal{H}_5 C^{-3/2}] \int_{\R} g(y) (y^3-2y) \phi(y;0,1) dy,  \\
\int_{\R^2} g(z/\sqrt{x}) p_4(z, x) dz dx&=\E[\mathcal{H}_4 C^{-3/2}] \int_{\R} g(y) H_3(y) \phi(y;0,1) dy,  \\
\int_{\R^2} g(z/\sqrt{x}) p_5(z, x) dz dx&=\frac{1}{4} \E[\mathcal{H}_7 C^{-5/2}] \int_{\R} g(y) (y^5-4y^3) \phi(y;0,1) dy  ,   \\
\int_{\R^2} g(z/\sqrt{x}) p_6(z, x) dz dx&=\frac{-1}{2} \E[\mathcal{H}_8 C^{-5/2}] \int_{\R} g(y) (y^5-7y^3+6 y) \phi(y;0,1) dy  ,\\
\int_{\R^2} g(z/\sqrt{x}) p_7(z, x) dz dx&=\E[\mathcal{H}_6 C^{-5/2}] \int_{\R} g(y) H_5(y) \phi(y;0,1) dy,   \\
\end{align*}
where
$$H_3(y)=y^3- 3y, \quad H_5(y)=y^5-10 y^3+15 y.$$
Due to $F=C$ in $\sigma_t(\iu,\iv)$ of \eqref{170207-5}, we notice the equalities 
$$\mathcal{H}_4=\frac{\mathcal{H}_5}{2} \mbox{ and }  4 \mathcal{H}_6=\mathcal{H}_7=  \mathcal{H}_8,$$
which leads to the following result.

\begin{cor} Under conditions of Theorem \ref{maintheorem}, the Edgeworth expansion is
\begin{equation}\label{EE1dim}
p^{Z_n/\sqrt{C}}(y)=\phi(y;0, 1)+n^{-1/2} \phi(y;0,1)( a_1 y+ a_2 (y^2-1)+a _3 y^3)
\end{equation}
where
\begin{align*}
a_1&= \E[\mathcal{H}_2 C^{-1/2}]-3 \E[\mathcal{H}_1 C^{-1/2}]+\frac{1}{2} \E[\mathcal{H}_5 C^{-3/2}]+3 \E[\mathcal{H}_6 C^{-5/2}],\\
a_2&=\E[\mathcal{H}_3], \\
a_3&=\E[\mathcal{H}_1 C^{-1/2}].
\end{align*}
\end{cor}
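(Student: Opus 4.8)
The plan is to apply Theorem \ref{maintheorem} in the one-dimensional case with the external variable taken to be $F=C_T$ itself, and then transform the resulting joint expansion of $(Z_n,C)$ into a marginal expansion for the ratio $Z_n/\sqrt{C}$. First I would record the elementary reduction $V_T^n/\sqrt{S_T}=Z_T^n/\sqrt{C_T}$, which shows it suffices to expand the law of $Z_n/\sqrt{C}$, and recall from Theorem \ref{maintheorem} that the joint density of $(Z_n,C)$ admits the expansion $p_n^{(Z_n,C)}(z,x)=\phi(z;0,x)p^C(x)+n^{-1/2}\sum_{j=1}^7 p_j(z,x)$, where each $p_j$ arises by applying $(-d_z)^{m_j}(-d_x)^{n_j}$ to $\phi(z;0,x)p^C(x)\E[c_j(z)|C=x]$.

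For any bounded test function $g$, I would then write $\E[g(Z_n/\sqrt{C})]=\int g(z/\sqrt{x})\,p_n^{(Z_n,C)}(z,x)\,dzdx+o(n^{-1/2})$ and change variables via $y=z/\sqrt{x}$. The Gaussian kernel scales as $\phi(z;0,x)\,dz=\phi(y;0,1)\,dy$, so the leading term reproduces the standard normal density. The heart of the computation is the evaluation of each correction integral $\int g(z/\sqrt{x})\,p_j(z,x)\,dzdx$: integrating by parts in $z$ and $x$ exactly as in \cite[Section 6]{PY} produces the seven Hermite-weighted expressions already displayed, the negative powers $C^{-1/2},C^{-3/2},C^{-5/2}$ emerging from the successive $x$-differentiations.

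With these identities in hand, the decisive step is to impose the structural constraints forced by the choice $F=C$, namely $\mathcal{H}_4=\mathcal{H}_5/2$ and $4\mathcal{H}_6=\mathcal{H}_7=\mathcal{H}_8$, and to collect the contributions by degree in $y$. Substituting the constraints, the three degree-five pieces coming from $p_5,p_6,p_7$ cancel down to a single multiple of $y$, while the two degree-three pieces from $p_3,p_4$ lose their $y^3$ component; what survives is exactly the cubic polynomial $a_1y+a_2(y^2-1)+a_3y^3$, with the $y^2-1$ term supplied solely by $p_1$ and the surviving $y^3$ term solely by $p_2$, yielding the stated coefficients.

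The main obstacle is precisely this last bookkeeping: one must verify that the $y^5$ and $y^3$ contributions from the five higher-order terms annihilate once the constraints are inserted, which is what makes the studentized expansion collapse to a genuine third-degree Edgeworth polynomial. A secondary technical point is to justify the change of variables within the admissible class $\mathcal{E}(K,\gamma)$ and the passage from test functions to the density $p^{Z_n/\sqrt{C}}$; since $g$ bounded renders $h(z,x)=g(z/\sqrt{x})$ bounded, the relevant functions fall within the scope of Theorem \ref{maintheorem}, and reading off the density from the identity relating $\int g(y)\,p^{Z_n/\sqrt{C}}(y)\,dy$ to the sum of the computed integrals is then routine.
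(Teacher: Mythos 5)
Your proposal follows essentially the same route as the paper: apply Theorem \ref{maintheorem} with $F=C$, evaluate the seven correction integrals via the change of variables $y=z/\sqrt{x}$ as in \cite[Section 6]{PY}, impose the constraints $\mathcal{H}_4=\mathcal{H}_5/2$ and $4\mathcal{H}_6=\mathcal{H}_7=\mathcal{H}_8$, and collect terms by degree, and your bookkeeping claims (cancellation of the $y^5$ and $y^3$ contributions from $p_5,p_6,p_7$ and of the $y^3$ contributions from $p_3,p_4$, with the $y^2-1$ term coming only from $p_1$ and the $y^3$ term only from $p_2$) all check out. This matches the paper's derivation, so no further comment is needed.
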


\section{Proofs} \label{sec7}
\setcounter{equation}{0}
\renewcommand{\theequation}{\thesection.\arabic{equation}}

Throughout this section all positive constants are denoted by $C$ although they may change from line 
to line. Furthermore, due to a standard localisation procedure (see e.g. \cite{BGJPS06}) all continuous
stochastic processes $(Y_t)_{t \in [0,1]}$ can be assumed to be uniformly bounded in $(\omega,t)$ when proving  Theorems \ref{th2n} and \ref{th3}. In particular, it applies to stochastic processes 
$Y_t = a^{(l)}(X_t)$ and $Y_t = b^{(l)}(X_t)$ for $l=0,1,2$. For a generic diffusion process
$(Y_t)_{t \in [0,1]}$ of the form  \eqref{sde} with bounded coefficients we obtain the inequality
\begin{align} \label{BDG}
\E[|Y_t-Y_s|^p] \leq C_p |t-s|^{p/2} \qquad \text{for any } p>0 \text{ and } t,s \in[0,1], 
\end{align}
which holds due to Burkholder-Davis-Gundy  inequality. We will use the notation $Y^n \ucp Y$
to denote the uniform convergence in probability $\sup_{t \in [0,1]}|Y^n_t - Y_t| \toop 0$. In the proofs
we will  deal with sequences of stochastic processes of the form
\[
Y^n_t = \sum_{i=1}^{[nt]} \xi_i^n,
\] 
where $\xi_i^n$, $i=1,\ldots,n$, are $\mathcal F_{i/n}$-measurable random variables with 
$\E[|\xi_i^n|^p]<\infty$ for any $p>0$. The following statements trivially hold:
\begin{align} \label{negligibility} 
 &\sum_{i=1}^{[nt]} \E[|\xi_i^n|] \to 0   \qquad \Rightarrow \qquad  Y^n \ucp 0 \\
&  \sum_{i=1}^{[nt]} \E[\xi_i^n|~\mathcal F_{(i-1)/n}] \ucp Y_t  \quad \text{ and }  
\quad  \sum_{i=1}^{[nt]} \E[(\xi_i^n)^2|~\mathcal F_{(i-1)/n}] \toop 0  \nonumber \\
& \label{doob} \Rightarrow Y^n \ucp Y.  
\end{align}

\subsection{Proof of Theorem \ref{20170206-1}}
We will sketch the proof, basically following the ideas of \cite[Theorem 4]{Yoshida2013}, but outlining the difference 
caused by the multiple stopping in the present situation. Note that as in \cite[Theorem 4]{Yoshida2013}, it suffices to verify assumptions of 
\cite[Theorem 1]{Yoshida2013}.

\subsubsection{Construction of the truncation functional $\psi_n$ from $s_n$ and other variables} 
Let $\bar{d}$ satisfy the inequality $1/3<\bar{d}<d<1/2$, where the constant $d$ has been introduced before assumption (B1), and 
define $\xi_n$ by 
\beas 
\xi_n &=& 
10^{-1}n^{{\colorr 2\bar{d}}}|C_n-C|^2+2\big[1+4\det\sigma_{(M_n,F)}(s_n^k)^{-1}\big]^{-1}
\\&&
+
\int_{[0,1]^2}
\left(\frac{|C^n_t-C_t-C^n_s+C_s|n^{\bar{d}}}{|t-s|^{3/8}}\right)^8dtds. 
\eeas
\begin{en-text}
where 
$\sfc$ is a constant satisfying $2/3<1-\sfa<\sfc<1$. 
\end{en-text}
We define $Q_n=(M_n,F)$, $R_n=(N_n,\widehat{F}_n)$ and set
\beas 
R_n' &=& \sigma_{Q_n}^{-1}\bigg(n^{-1/2}\langle DQ_n,DR_n\rangle_{\mathbb H}
+n^{-1/2}\langle DR_n,DQ_n\rangle_{\mathbb H} +n^{-1}\langle DR_n,DR_n\rangle_{\mathbb H}\bigg). 
\eeas
Let $\psi\in C^\infty(\bbR;[0,1])$ be a function such that $\psi(x)=1$ if $|x|\leq1/2$ and 
$\psi(x)=0$ if $|x|\geq1$. 
We introduce the random truncation
\beas 
\psi_n &=& \psi(\xi_n)\psi(n^{1/2}|R_n'|^2).
\eeas
Remark that $\psi_n$ is well defined because so is 
$\sigma_{Q_n}^{-1}$ under the truncation by $\xi_n$. 
{\colorr In fact, if $\xi_n\leq1$, then 
$\det\sigma_{Q_n}\geq s_n^k/4$, that is nondegenerate thanks to $(B3)$(ii). 
Therefore $\sigma_{Q_n}^{-1}$ makes sense on the event $\{\xi_n\leq1\}$. 
We are defining $\psi_n=0$ on the event $\{\xi_n>1\}$ 
since $\psi(\xi_n)=0$ there. Thus, $\psi_n$ is well-defined.}

\subsubsection{Characteristic function and its decomposition}
Let $\check{Z}_n=(Z_n,F_n)$ and let $\check{Z}_n^\alpha=Z_n^{\alpha_1}F_n^{\alpha_2}$ 
for $\alpha=(\alpha_1,\alpha_2)\in\bbZ_+^k\times\bbZ_+^{q}$. 
Define
\beas 
\hat{g}_n^\alpha(u,v) &=& \mathbb E\big[\psi_n\check{Z}_n^\alpha\exp\big(Z_n[\iu]+F_n[\iv]\big)\big]
\eeas
for $u\in\bbR^k$ and $v\in\bbR^{q}$ and let 
\bea\label{170207-1}
g_n^\alpha(z,x) &=& (2\pi)^{- (k+q)}\int_{\bbR^{ k+q}} 
\exp\big(-z[\iu]-x[\iv]\big)\hat{g}_n^\alpha(u,v)dudv. 
\eea
The existence of the integral (\ref{170207-1}) can be verified by 
the nondegeneracy of the Malliavin covariance matrix of $(Z_n,F_n)$ 
under the truncation by $\psi_n$. 
We define the quantities 
\begin{align*}
\Psi(u,v) &= 
\exp\bigg(-\half C[u^{\otimes2}]+{\tt i} F[v]\bigg), \\[1.5 ex]
\ep_n(u,v) &= -\half(C_n-C)[u^{\otimes2}]+{\tt i}(F_n[v]-F[v])+{\tt i} n^{-1/2}N_n[u], \\[1.5 ex]
e^n_t(u) &= 
\exp\bigg({\tt i} M^n_t[u]+\half C^n_t[u^{\otimes2}]\bigg), \\[1.5 ex]
L^n_t(u) &= e^n_t(u)-1 \qquad \text{and} \qquad \dote(x) = \int_0^1 e^{sx}ds.
\end{align*}
Finally, we introduce the functions
\begin{align*}
\Phi^{1,\alpha}_n(u,v) &=
\partial^\alpha \mathbb E\bigg[e^n_1(u)\Psi(u,v)\ep_n(u,v)\dote(\ep_n(u,v))\psi_n\bigg], \\[1.5 ex]
\Phi^{2,\alpha}_n(u,v) &=
\partial^\alpha  \mathbb E\bigg[L^n_1(u)\Psi(u,v)\psi_n\bigg].
\end{align*}
The existence of $\Phi^{1,\alpha}_n(u,v)$ and $\Phi^{2,\alpha}_n(u,v)$ 
involving $e^n_1(u)\Psi(u,v)$ is ensured by the truncation $\psi_n$. 
Let us set
\beas 
\Phi_n^{0,\alpha}(u,v) &=& 
\partial^\alpha  \mathbb  E\big[\Psi(u,v)\psi_n\big]. 
\eeas
Then $\hat{g}_n^\alpha(u,v)$ possess the decomposition
\beas 
\hat{g}_n^\alpha(u,v) &=& \Phi_n^{0,\alpha}(u,v)+\Phi_n^{1,\alpha}(u,v)+\Phi_n^{2,\alpha}(u,v).
\eeas

\subsubsection{Error bound}
\begin{en-text}
Let 
\beas 
h^0_n(z,x) &=& 
E\big[\psi_n\phi(z;0,C)|F=x\big]p^F(x)
\\&&
+n^{-1/2} \sum_\alpha (-\partial_z)^{\alpha_1}(-\partial_x)^{\alpha_2}
\bigg\{E\big[c_\alpha(z)\phi(z;0,C)|F=x\big]p^F(x)\bigg\}. 
\eeas
We follow the argument in Section 6.2 of \cite{Yoshida2013} to obtain (42) therein, 
except for the parts concerning the derivation of 
(38) and (43) there based on the non-degeneracy argument. 
By this, we can identify $\overline{\sigma}$ as in (\ref{170207-5}). 
Then Theorem 1 in Section 4 
of \cite{Yoshida2013} gives the estimate
\end{en-text}
\begin{en-text}
\bea\label{170207-6} 
\sup_{(z,x)\in\bbR^{d+q}}
\big|(|z|+|x|)^m
\big(g^0_n(z,x)-h^0_n(z,x)\big)\big|
&=& 
o(n^{-1/2})
\eea
for every $m\in\bbZ_+$ 
\end{en-text}
We apply \cite[Theorem 1]{Yoshida2013} by verifying 
conditions $[B1]$, $[B2]_\ell$, $[B3]$ and $[B4]_{\ell,{\mathfrak m},{\mathfrak n}}$ therein under our assumptions (B1), (B2), (B3).
Remark that ``$\ell$'' therein
corresponds to $\check{\sfd}+3$, where $\check{\sfd}=k+q$.
Condition $[B1]$ follows from (B2)(iii) and a standard central limit theorem with a mixed normal limit. 
Condition $[B2]_\ell$ is verified by (B2)(i), (B1)(i)-(ii), (B2)(ii) and the definition of $\xi_n$. 

Condition $[B3]$ {\colorr is verified as follows. 
$C^{n,j}$ and $C^j$ are expressed as 
\beas 
C^{n,j}_t &=& 
n\sum_{i=1}^{m_n}\int_{i=1}^{m_n}K^{n,j}(t_{j-1})^2\int_{t_{i-1}\wedge t}^{t_i\wedge t}
\bigg(\int_{t_{i-1}}^sdW_r\bigg)^2ds
\eeas
and 
\beas 
C^j_t &=& \half\int_0^{t\wedge T_j}K(s)^2ds.
\eeas
Routinely, we have 
\beas 
\sup_{n\in\bbN}\bigg\|n^{\bar{d}}\sup_{t\in[0,1]}\big|C^{n,j}_t-C^j_t\big|\bigg\|_p
&<&
\infty
\eeas
for every $p>1$ from $(B1)$(i) and (ii). 
Therefore, $[B3]$(i) follows as 
\beas 
\bbP[|\xi_n|>1/2]
&\leq&
\bbP[n^{\bar{d}}|C^n_1-C_1|\geq 1]+\bbP[\det\sigma_{\bbX^k_1}\leq s_n^k]
\\&&
+\bbP\bigg[\int_{[0,1]^2}
\left(\frac{|C^n_t-C_t-C^n_s+C_s|n^{\bar{d}}}{|t-s|^{3/8}}\right)^8dtds\geq\frac{1}{10}\bigg]
\\&\to&
0
\eeas
 as $n\to\infty$ thanks to $(B3)$(i) and $(B1)$(ii). 
 By the definition of $\xi_n$, on the event $\{|\xi_n|<1\}$, 
 $n^{\sf (1-{\sf a})/2}|C_n-C|\leq 1$ for large $n$, which is $[B3]$(ii). 
 Moreover, $[B3]$(iii) follows from $(B3)$(ii) since 
 $\limsup_{n\to\infty}\bbE\big[1_{\{|\xi_n|\leq1\}}\det\sigma_{\bbX^k_1}^{-p}\big]
 \leq \limsup_{n\to\infty}\bbE\big[4^p(s_n^k)^{-p}\big]<\infty
 $.

}
Condition $[B4]_{\ell,{\mathfrak m},{\mathfrak n}}$(i) is rephrased as (B2)(iv). 
The present $\overline{\sigma}$ is in $\cals(\check{\sfd}+3,5,2)$ 
in particular; see \cite[p. 892]{Yoshida2013} for the relevant definitions. 
Thus, \cite[Theorem 1]{Yoshida2013} gives the error bound 
\beas
\sup_{f\in\cale(R,\gamma)}\Delta_n(f) 
&=& 
o(n^{-1/2})
\eeas
if the following two conditions are fulfilled: 
\bea\label{170208-1}
\lim_{n\to\infty}n^{1/2}\Phi^{2,\alpha}_n(u,v) &=& 
\partial^\alpha  \mathbb E\big[\Psi(u,v)\overline{\sigma}(\iu,\iv)\big]
\eea
for $u\in\bbR^k$, $v\in\bbR^{q}$ and $\alpha\in\bbZ_+^{\check{\sfd}}$, 
and 
\bea\label{170208-2}
\sup_n\sup_{(u,v)\in\Lambda^0_n(\check{\sfd},\sfq)}
n^{1/2}|(u,v)|^{\check{\sfd}+1-\ep}\big|\Phi^{2,\alpha}_n(u,v)\big| &<& \infty
\eea
for some $\ep=\ep(\alpha)\in(0,1)$ for every $\alpha\in\bbZ_+^{\check{\sfd}}$, 
where 
$\Lambda^0_n(\check{\sfd},\bar{d})=\{(u,v)\in\bbR^{\check{\sfd}};\>|(u,v)|\leq n^{\bar{d}/2}\}$. 

We obtain (\ref{170208-1}) as in  \cite[Eq. (41)]{Yoshida2013}, 
except for the parts concerning the derivation of  \cite[Eqs. (38) and (43)]{Yoshida2013}
by a non-degeneracy argument. 
We shall show (\ref{170208-2}). 
By using duality twice for the double stochastic integrals, we have 
\beas&& 
n^{1/2}\Phi^{2,\alpha}_n(u,v) 
\\&=& 
n\sum_{\sfa_0,\sfa_1:\sfa_0+\sfa_1=\alpha}c_{\sfa_0,\sfa_1}
\sum_{i=1}^{m_n}
\int_{t_{i-1}}^{t_i}\int_r^{t_i}  \mathbb E\bigg[\partial^{\sfa_0}K^n(t_{i-1})[\tti u]
\partial^{\sfa_1}D_r \bigg\{e^n_s(u)D_s\big(\Psi(u,v)\psi_n\big)\bigg\}\bigg]dsdr
\eeas
for some constants $c_{\sfa_0,\sfa_1}$. 
\begin{en-text}
\beas &&
D_{r_1,...,r_m}
\partial^{\sfa_0}K^n(t_{i-1})[\tti u]\partial^{\sfa_1}D_r \bigg\{e^n_s(u)D_s\big(\Psi(u,v)\psi_n\big)\bigg\}
\\&=&
e^n_s(u)\Psi(u,v)\sigma(n,\sfa_0,\sfa_1,r,s,r_1,...,r_m;\tti u,\tti v),
\eeas
for $m=0,1,...,\ell-2$, 
where $\sigma(n,\sfa_0,\sfa_1,r,s,r_1,...,r_m;\tti u,\tti v)$ is a polynomial random symbol that admits 
\beas 
\big|D_{r_1,...,r_m}\sigma(n,\sfa_0,\sfa_1,r,s,r_1,...,r_m;\tti u,\tti v)\big| &\leq& 
(1+|(u,v)|)^5A(r,s,r_1,...,r_m)
\eeas
on $(u,v)\in\Lambda^0_n(\check{\sfd},\sfq)$ under truncation by $\psi_n$ 
with some random variables $A(r,s,r_1,...,r_m)$ such that 
$\{A(r,s,r_1,...,r_m);r,s,r_1,...,r_m\}$ is bounded in $L_{\infty-}$; for example, 
factors like $(D_{r_1}(C^n_t-C_n)[u^{\otimes2}]$ and $(C^n_t-C^n_1)[u,\cdot]$ come out 
\end{en-text}
We have 
\beas 
D_r \bigg\{e^n_s(u)D_s\big(\Psi(u,v)\psi_n\big)\bigg\}
&=&
e^n_s(u)\Psi(u,v)\sigma(n,r,s;\tti u,\tti v)
\\&=&
\bbF^n_s\bbG_s\bbH^n_s
\sigma(n,r,s;\tti u,\tti v),
\eeas
where 
\beas 
\bbF^n_s &=&  \exp\bigg(M^n_s[\tti u]+F[\tti v]\bigg),
\\
\bbG_s &=& \exp\bigg(-\half\big(C_1-C_s)[u^{\otimes2}]\bigg),
\\
\bbH^n_s &=& \exp\bigg(\half \big(C^n_s-C_s\big)[u^{\otimes2}]\bigg) 
\eeas
and 
$\sigma(n,r,s;\tti u,\tti v)$ is  a polynomial random symbol 
of fourth order in $(u,v)$ with coefficients in $\bbD_{\ell-2,\infty}(\bbR)$. 

First, we will consider the case $\alpha=0$, 
and estimate $n^{1/2}\Phi^{2,0}_n(u,v)$. 
Let $s\in(T_{j-1},T_j)$. 
Then 
$M^n_s\>=\>\big(M^{n,1}_{T_1},...,M^{n,j-1}_{T_{j-1}},M^{n,j}_s,
\ldots , M^{n,k}_s\big)
$.
We will estimate the speed of the decay of 
the expectations of the components of 
$n^{1/2}\Phi^{2,\alpha}_n(u,v)$ for $(u,v)\in\Lambda^0_n(\check{\sfd},\bar{d})$. 
Our strategy is as follows. 
For $s\in(\tau^j_n,T_j)$, we apply the integration-by-parts formula 
for 
$
\big(M^{n,1}_{T_1},...,M^{n,j-1}_{T_{j-1}},M^{n,j}_s,F\big)
$
to obtain the decay $|(u_1,...,u_j,v_1,...,v_{q})|^{-(\check{\sfd}+1-\ep)}$, 
where $u=(u_1,...,u_k)$ and $v=(v_1,...,v_q)$. 
For that, we need to show that the $D$-derivatives of 
$\bbG_s$ and $\bbH^n_s$ up to $\ell$-times are $\bbL_p$-bounded uniformly in 
$(u,v)\in\Lambda^0_n(\check{\sfd},\bar{d})$ and $n\in\bbN$, 
under the truncation by $\psi_n$. 
We see that this property holds for $\bbH^n_s$ by (B1)(ii). 
For $\bbG_s$, we verify the property as follows. 
The multiple $D$-derivative of $\bbG_s$ is a linear combination of terms of the form
\beas 
\bigg\{\prod_{{\colorr\alpha=1}}^{{\colorr \bar{\alpha}}} D_{{\colorr {\sf A}_\alpha}}
\bigg(\sum_{i_1,i_2=j}^k
\bbI^{i_1\wedge i_2}_su_{i_1}u_{i_2}\bigg)
\bigg\}\bbG_s
\quad {\colorr \bigg({\sf A}_\alpha = r_{a(\alpha-1)+1},...,r_{a(\alpha)}, 
\quad 1\leq a(1)\leq a(2)\leq\cdots\bigg) }
\eeas
that is bounded by 
\begin{en-text}
\beas 
\bigg\{\prod_\alpha 
\bigg(
\max_{i=j,...,k}\bigg|\frac{D_{{\colorr {\sf A}_\alpha}}\bbI^{i}_s}{T_{i}-s}\bigg|
\max_{i=j,...,k}\bigg[\frac{\bbI^{i}_s}{T_{i}-s}\bigg]^{-1}
\sum_{i_1,i_2=j}^k\bbI^{i_1\wedge i_2}_su_{i_1}u_{i_2}\bigg) \bigg\}\bbG_s.
\eeas
\end{en-text}
$\bbG_s$ times a polynomial ${\mathfrak p}$ of random variables 
\beas 
\max_{i=j,...,k}\bigg|\frac{D_{{\colorr {\sf A}_\alpha}}\bbI^{i}_s}{T_{i}-s}\bigg|,\quad
\max_{i=j,...,k}\bigg[\frac{\bbI^{i}_{s\vee T_{i-1}}}{T_{i}-(s\vee T_{i-1})}\bigg]^{-1},\quad
\max_{i=j,...,k}\bigg|\frac{\bbI^{i}_s}{T_{i}-s}\bigg|,\quad
|(u_i)_{i=j,...,k}|.
\eeas
Indeed, 
\beas &&
\bigg|D_{{\colorr {\sf A}_\alpha}}
\bigg(\sum_{i_1,i_2=j}^k
\bbI^{i_1\wedge i_2}_su_{i_1}u_{i_2}\bigg)\bigg|
\\&=&
\bigg|\frac{
\sum_{i_1,i_2=j}^k
D_{{\colorr {\sf A}_\alpha}}\bbI^{i_1\wedge i_2}_s/(T_{i_1\wedge i_2}-s)
u_{i_1}u_{i_2}
}
{
\sum_{i_1,i_2=j}^k
\bbI^{i_1\wedge i_2}_s/(T_{i_1\wedge i_2}-s)u_{i_1}u_{i_2}
}\bigg|
\bigg|\sum_{i_1,i_2=j}^k
\bbI^{i_1\wedge i_2}_s/(T_{i_1\wedge i_2}-s)u_{i_1}u_{i_2}\bigg|
\\&\leq&
\bigg|\bigg(D_{{\colorr {\sf A}_\alpha}}\bbI^{i_1\wedge i_2}_s/(T_{i_1\wedge i_2}-s)\bigg)_{i_1,i_2=j,...,k}\bigg|
\bigg|\bigg(\bbI^{i_1\wedge i_2}_s/(T_{i_1\wedge i_2}-s)\bigg)_{i_1,i_2=j,...,k}^{-1}\bigg|
\\&&\times \bigg|\sum_{i_1,i_2=j}^k
\bbI^{i_1\wedge i_2}_s/(T_{i_1\wedge i_2}-s)u_{i_1}u_{i_2}\bigg|,
\eeas
where we used 
\def\simleq{\ \raisebox{-.7ex}{$\stackrel{{\textstyle <}}{\sim}$}\ }
\beas 
|S^{-1/2}|&\simleq& \|S^{-1/2}\|_{op}
\>=\> \bigg(\sup_{v:|v|=1}S^{-1}[v^{\otimes2}]\bigg)^{1/2}
\>\leq\> |S^{-1}|^{1/2}
\eeas
for any non-degenerate symmetric matrix $S$. 
Moreover, the identity
\beas 
\det \bigg(\bbI^{i_1\wedge i_2}_s/(T_{i_1\wedge i_2}-s)\bigg)_{i_1,i_2=j,...,k}
&=&
\prod_{i=j,...,k}\frac{\bbI^{i}_{s\vee T_{i-1}}}{T_{i}-(s\vee T_{i-1})}
\eeas
can be used to estimate the inverse matrix in the above expression. 

\begin{en-text}
\beas 
\bigg\{\prod_\alpha 
\bigg(\sum_{i_1,i_2=j}^k 
\frac{D_{r_{g_{\alpha-1}},...,r_{g_\alpha}}\bbI^{i_1\wedge i_2}_s}{T_{i_1\wedge i_2}-s}
\bigg[\frac{\bbI^{i_1\wedge i_2}_s}{T_{i_1\wedge i_2}-s}\bigg]^{-1}
\bbI^{i_1\wedge i_2}_su_{i_1}u_{i_2}\bigg) \bigg\}\bbG_s,
\eeas
\end{en-text}
\noindent
The term ${\mathfrak p}\bbG_s$ is 
$\bbL_p$-bounded due to (B1)(i), (iii) and 
\beas 
\sup_{u\in\bbR^k,\omega,\atop s\in(T_{j-1},T_j)}
\bigg(\sum_{i_1,i_2=j}^k\bbI^{i_1\wedge i_2}_su_{i_1}u_{i_2}\bigg)^m\bbG_s
&<&\infty
\eeas
for every $m\in\bbN$ and $j=1,...,k$. 
\begin{en-text}
$L_p$-boundedness of 
$D$-derivatives of 
$\exp\big(-\half\big(C_1-C_s)[u^{\otimes2}]\big)$ by replacing the emerging  
$D^m(C_1-C_s)[u^{\otimes2}]$ by $(C_1-C_s)[u^{\otimes2}]$ times tame variables. 
\end{en-text}

If $j=k$, then this estimate is sufficient for our use. 
When $j<k$, 
we also use the non-degeneracy of 
the matrix 
\beas
\calm(T_{j+1},T_k) &=&
\bigg(
\half\int^{T_{i_1\wedge i_2}}_{T_j}K(t)^2dt
\bigg)_{i_1,i_2=j+1,...,k}
\eeas
and the estimate
\bea\label{20170310-1} 
(C_1-C_s)[u^{\otimes2}] 
&\geq&
\calm(T_{j+1},T_k)
\big[(u_{j+1},...,u_k)^{\otimes2}\big]
\eea
in order to obtain the decay $|(u_{j+1},...,u_k)|^{-(\check{\sfd}+1-\ep)}$. 
For (\ref{20170310-1}), we note that 
\beas 
\sum_{i_1,i_2=1}^k\int_s^{s\vee T_{i_1\wedge i_2}}K(t)^2dt\>u_{i_1}u_{i_2}
&=&
\int_s^1\bigg(\sum_{i=1}^k1_{[0,T_i]}(t)u_i\bigg)^2K(t)^2dt
\\&\geq&
\int_{T_j}^1\bigg(\sum_{i=1}^k1_{[0,T_i]}(t)u_i\bigg)^2K(t)^2dt
\\&=&
\sum_{i_1,i_2=j+1}^k\int_{T_j}^{T_{i_1\wedge i_2}}K(t)^2dt\>u_{i_1}u_{i_2}.
\eeas
By (\ref{290219-1}) we have
\bea\label{290219-2}
\det \calm(T_{j+1},T_k)^{-1} &\in& L_{\infty-},
\eea
and hence (\ref{20170310-1}) and (\ref{290219-2}) imply that 
\bea\label{290219-3} 
|(u_{j+1},...,u_k)|^m\exp\bigg(-\half\big(C_1-C_s)[u^{\otimes2}]\bigg)
&\leq&
{\sf C}_m\>\big|\calm(T_{j+1},T_k)^{-1}\big|^m
\eea
is $\bbL_{\infty-}$-bounded uniformly in $(u_{j+1},...,u_k)$ for every $m\in\bbN$. 
Finally, we may use one of the above estimates of the decay, depending on    
$|(u_1,...,u_j,v_1,...,v_{q})|\geq|(u_{j+1},...,u_k)|$ or not. 
\begin{en-text}
The estimate (\ref{290219-3}) also serves to show $L_p$-boundedness of $D$-derivatives of 
$\exp\big(-\half\big(C_1-C_s)[u^{\otimes2}]\big)$ by replacing the emerging  
$D^m(C_1-C_s)[u^{\otimes2}]$ by $(C_1-C_s)[u^{\otimes2}]$ times tame variables. 
\end{en-text}

Following the proof  of \cite[Theorem 4]{Yoshida2013}, i.e. 
the procedure (a)-(g) therein with the additional truncation 
\beas 
\psi_{n,s}^j &=& \psi\bigg(2\big[1+4\det \sigma_{(M^{n,1}_{T_1},...,M^{n,j-1}_{T_{j-1}},M^{n,j}_s,F\big)}
(s_n^j)^{-1}\big]^{-1}\bigg), 
\eeas
we obtain the desired decay of 
\beas 
n
\sum_{i=1}^{m_n}
\int_{t_{i-1}}^{t_i}1_{s\in(\tau^j_n,T_j)}\int_r^{t_i}  \mathbb E\bigg[\partial^{\sfa_0}K^n(t_{i-1})[\tti u]
\partial^{\sfa_1}D_r \bigg\{e^n_s(u)D_s\big(\Psi(u,v)\psi_n\big)\bigg\}\bigg]dsdr
\eeas
for $\alpha=0$. A similar estimate can be shown for a general $\alpha$. 

For $s\in(T_{j-1},\tau^j_n)$, we apply the integration-by-parts formula 
for 
\beas 
\big(M^{n,1}_{T_1},...,M^{n,j-1}_{T_{j-1}},F\big)
\eeas
to obtain the decay $|(u_1,...,u_{j-1},v_1,...,v_{q})|^{-(\check{\sfd}+1-\ep)}$. 
In order to obtain the decay $|(u_j,...,u_k)|^{-(\check{\sfd}+1-\ep)}$, we use 
the nondegeneracy of 
\beas
\calm(\tau^j_n,T_k) &=&
\bigg(
\half\int^{T_{i_1\wedge i_2}}_{\tau^j_n}K(t)^2dt
\bigg)_{i_1,i_2=j,\ldots,k}.
\eeas
Then we repeat a similar procedure as in the previous case to obtain the desired decay. 
We deduce (\ref{170208-2}) by combining the above estimates.

\subsection{Proof of Theorem \ref{th2n}} 
We state the decompositions in the differential form for the ease of exposition. 
Applying Taylor expansion we conclude that
\begin{align} \label{stochexpansion}
dV_t^n &=  \sqrt{n}\left( a(X^n_{\varphi_n(t)}) - a(X_t)  \right) dt + 
\sqrt{n}\left( b(X^n_{\varphi_n(t)})  - b(X_t)   \right) dW_t \\[1.5 ex]
&= \sqrt{n} \left( a(X^n_{t}) - a(X_t)  \right) dt +  \sqrt{n}\left(  a(X^n_{\varphi_n(t)}) - a(X_t^n)  \right) dt 
\nonumber \\[1.5 ex]
&+\sqrt{n} \left( b(X^n_{t})  - b(X_t) \right) dW_t +  \sqrt{n}\left(  b(X^n_{\varphi_n(t)}) -b(X_t^n)  \right) dW_t  \nonumber \\[1.5 ex]
&=\left( \left(a'(X_t) + \tilde a_t^n \right) V_t^n + \tilde a_t^{\prime n} \right)dt \nonumber \\[1.5 ex]
&+ \left(\left(b'(X_t) +  \tilde b_t^n\right) V_t^n - \sqrt{n} bb'(X^n_{\varphi_n(t)})
(W_t - W_{\varphi_n(t)}) +  \tilde b_t^{\prime n} \right) dW_t,
\nonumber
\end{align} 
where the processes $\tilde a^n, \tilde a^{\prime n}, \tilde b^n, \tilde b^{\prime n}$ are defined as
\begin{align*}
&\tilde a^n_t = a'(\widetilde{X}_t^n) -  a'(X_t), \qquad \tilde a^{\prime n}_t= \sqrt{n}\left(  a(X^n_{\varphi_n(t)}) - a(X_t^n)  \right), \\[1.5 ex]
& \tilde b^n_t = b'(\widetilde{X}_t^{\prime n}) -  b'(X_t), \qquad \tilde b^{\prime n}_t=
\sqrt{n} \left( b(X^n_{\varphi_n(t)}) -b(X_t^n) +  bb'(X^n_{\varphi_n(t)})
(W_t - W_{\varphi_n(t)}) \right),
\end{align*}
and $\widetilde{X}_t^n, \widetilde{X}_t^{\prime n}$ are certain random variables with 
$|\widetilde{X}_t^n - X_t| \leq |X_t^n - X_t|$, $|\widetilde{X}_t^{\prime n} - X_t| \leq |X_t^n - X_t|$.
In particular, it holds that $\widetilde{X}^n \ucp X$ and $\widetilde{X}^{\prime n} \ucp X$.
Using Lemma \ref{lem1} we thus can write 
\begin{align*}
V_t^n = \Sigma_t^n \left(\int_0^t   (\Sigma_s^n)^{-1}  \left( \tilde{a}_s^{\prime n} 
- \left(b'(X_t) +  \tilde b_t^n\right)  \left (\tilde{b}_s^{\prime n} - \sqrt{n} bb'(X_{\varphi_n(s)}) (W_s - W_{\varphi_n(s)}) \right) \right)ds 
\right. \\
\left. + 
 \int_0^t   (\Sigma_s^n)^{-1} \left (\tilde{b}_s^{\prime n} - \sqrt{n} bb'(X_{\varphi_n(s)}) (W_s - W_{\varphi_n(s)}) \right) dW_s \right),
\end{align*}
where the process $\Sigma^n$ is defined by
\begin{align} \label{sigmatn}
\Sigma_t^n &= 
\exp \left( \int_0^t (b'(X_s) + \tilde{b}_s^n) dW_s  \right. \\
& \left.+   \int_0^t \left(a'(X_s) + \tilde{a}_s^n - 
\frac12 \left( b'(X_s)+ \tilde{b}_s^n  \right)^2  \right) ds \right) \nonumber 
\end{align}
Comparing the representation of $V_t^n$ with \eqref{barvn}, we just need to show that 
\begin{align} 
& \label{ucp1} \Sigma^n \ucp \Sigma, \\
&\label{ucp2} \int_0^t \Sigma_s^{-1}  \tilde{a}_s^{\prime n}  ds \ucp 0, \\
& \label{ucp3}  \int_0^t   \Sigma_s^{-1} \tilde{b}_s^{\prime n} dW_s \ucp 0, \\
& \label{ucp4}  \int_0^t   \Sigma_s^{-1} \left(b'(X_t) +  \tilde b_t^n\right)  \left (\tilde{b}_s^{\prime n} - \sqrt{n} bb'(X_{\varphi_n(s)}) (W_s - W_{\varphi_n(s)}) \right) ds \ucp 0,
\end{align} 
where the process $\Sigma$ has been defined in \eqref{vexplicit}. Since both $\tilde{a}_s^n$ 
and $\tilde{b}_s^n$ are bounded as assumed in the beginning of Section  \ref{sec7},  and $\tilde{a}^n \ucp 0$, $\tilde{b}^n \ucp 0$ (because 
$\widetilde{X}^n \ucp X$, $\widetilde{X}^{\prime n} \ucp X$ and $a,b \in C^2(\R)$), we readily 
deduce the convergence at \eqref{ucp1}. To show the convergence at  \eqref{ucp2} we use the decomposition $\tilde{a}_s^{\prime n}= \tilde{a}_s^{\prime n,1} + \tilde{a}_s^{\prime n,2}$ with 
\begin{align*}
\tilde{a}_s^{\prime n,1}&= - \sqrt{n} a'(X^n_{\varphi_n(s)}) \int_{\varphi_n(s)}^s b(X^n_{\varphi_n(u)}) 
dW_u, \\
\tilde{a}_s^{\prime n,2} &= \sqrt{n} \left( a'(X^{\prime \prime n}_{s})  - a'(X^n_{\varphi_n(s)}) \right) \left( X^n_{\varphi_n(s)} - X_s^n  \right) \\
& - \sqrt{n} a'(X^n_{\varphi_n(s)}) \int_{\varphi_n(s)}^s a(X^n_{\varphi_n(u)}) 
du,
\end{align*}
where $X^{\prime \prime n}_{s}$ is a certain random variable with 
$|X^{\prime \prime n}_{s} - X_s^n| \leq |X_s^n - X^n_{\varphi_n(s)}|$. Since $X^{\prime \prime n}_{s}
\ucp X_s$ and all involved objects are assumed to be bounded, we conclude by \eqref{BDG} that
\[
\E[|\tilde{a}_s^{\prime n,2}|] \leq C \epsilon_n
\] 
with $\epsilon_n \to 0$ as $n \to \infty$. Thus, we obtain 
\begin{align*}
\int_0^t \Sigma_s^{-1}  \tilde{a}_s^{\prime n,2}  ds \ucp 0
\end{align*}
by an application of \eqref{negligibility}. Now, we notice that $\E[\tilde{a}_s^{\prime n,1}| 
~\mathcal F_{(i-1)/n}] = 0$ and $\E[|\tilde{a}_s^{\prime n,1}|^2 ] \leq C$. Thus, we deduce that 
\begin{align*} 
\int_0^t \Sigma_s^{-1}  \tilde{a}_s^{\prime n,1}  ds =
\int_0^t \Sigma_{\varphi_n(s)}^{-1}  \tilde{a}_s^{\prime n,1}  ds + \int_0^t 
\left(\Sigma_s^{-1} - \Sigma_{\varphi_n(s)}^{-1}   \right) \tilde{a}_s^{\prime n,1}  ds
 \ucp 0,
\end{align*} 
which follows by a combination of \eqref{negligibility} and \eqref{doob}. Indeed, it holds that 
\[
\int_0^t \Sigma_{\varphi_n(s)}^{-1}  \tilde{a}_s^{\prime n,1}  ds = -\sqrt n \sum_{i=1}^{[nt]}
\Sigma_{\frac{i-1}{n}}^{-1} a' (X^n_{\frac{i-1}{n}}) b' (X^n_{\frac{i-1}{n}}) 
\int_{\frac{i-1}{n}}^{\frac in} (W_s -W_{\frac{i-1}{n}} )ds + o_{\mathbb P}(1),
\]
and \eqref{doob} can be applied to the last line. 
Consequently, we have \eqref{ucp2}. Finally, we show the convergence at \eqref{ucp3}.
Observe the decomposition  
\begin{align*}
\tilde{b}_s^{\prime n} &= \sqrt{n} \left( b'(X^{\prime \prime n}_{s})  - b'(X^n_{\varphi_n(s)}) \right) \left( X^n_{\varphi_n(s)} - X_s^n  \right) \\
& - \sqrt{n} b'(X^n_{\varphi_n(s)}) \left(\int_{\varphi_n(s)}^s a(X_u^n) du+ \int_{\varphi_n(s)}^s b(X_u^n) - b(X_{\varphi_n(s)}^n) dW_u \right).
\end{align*}
As for the term $\tilde{a}_s^{\prime n,2}$ we deduce that $\E[|\tilde{b}_s^{\prime n}|] 
\leq C \epsilon_n$ with $\epsilon_n \to 0$ as $n \to \infty$. Hence, we obtain \eqref{ucp3}.
The proof of \eqref{ucp4} combines the proof methods of  \eqref{ucp2} and \eqref{ucp3}.
Consequently, 
\[
\sup_{t \in [0,1]} |V_t^n -  \overline{V}_t^n| \toop 0,
\]
which completes the proof of Theorem  \ref{th2n}. \qed

\subsection{Proof of Theorem \ref{th3}} 
The derivation of the second order stochastic expansion is more involved than the expansion of Theorem \ref{th2n}, but the underlying methodology is similar. For simplicity of exposition we sometimes use the same notations as in the previous section although they might have a different meaning. 
Instead of the first order approximation in the last line of 
\eqref{stochexpansion}, we may further develop
\begin{align}  \label{2expansion}
dV_t^n &=  
a'(X_t) V_t^ndt + (b'(X_t) V_t^n - \sqrt{n} bb'(X^n_{\varphi_n(t)})(W_t - W_{\varphi_n(t)})) dW_t \\[1.5 ex]
&+ \left(\frac{1}{2 \sqrt{n}} a''(X_t) (V_t^n)^2 - \sqrt{n} 
 ba'(X^n_{\varphi_n(t)})
(W_t - W_{\varphi_n(t)})  \right.  \nonumber \\[1.5 ex]
 &\left. -\sqrt{n}aa'(X^n_{\varphi_n(t)}) 
(t - \varphi_n(t)) - \frac{\sqrt{n}}{2} b^2a''(X^n_{\varphi_n(t)}) (W_t - W_{\varphi_n(t)})^2
\right) dt\nonumber \\[1.5 ex]
&+ \Big(
\frac{1}{2 \sqrt{n}} b''(X_t) (V_t^n)^2 -\frac{\sqrt{n}}{2} 
 b^2b'' (X_{\varphi_n(t)}^n)
(W_t - W_{\varphi_n(t)})^2 \nonumber  \\[1.5 ex] 
&  
	- \sqrt{n}ab' (X_{\varphi_n(t)}^n)(t - \varphi_n(t)) \Big) dW_t 
+ \tilde{a}_t^n dt + \tilde{b}_t^n dW_t \nonumber,
\end{align} 
where  $ \tilde{a}_t^n$ and $ \tilde{b}_t^n$ are stochastic processes, whose negligibility in the involved asymptotic expansions is shown in exactly the same manner as in   \eqref{ucp1}-\eqref{ucp4} (although these terms have a different meaning in this subsection).

Now, recall the definition of the first order approximation $ \overline{V}_t^n$ at \eqref{barvn}. By Lemma \ref{lem1} this process satisfies the
stochastic differential 
equation 
\begin{align*} 
d\overline{V}_t^n &= 
a'(X_{t}) \overline{V}_t^n dt + b'(X_{t}) \overline{V}_t^n dW_t
 - \sqrt{n} \Sigma_t \Sigma_{\varphi_n(t)}^{-1} bb'(X_{\varphi_n(t)}^n)
(W_t - W_{\varphi_n(t)}) dW_t \\[1.5 ex]
& - \sqrt{n}\Sigma_t \Sigma_{\varphi_n(t)}^{-1}b'(X_t) bb'(X^n_{\varphi_n(t)})(W_t - W_{\varphi_n(t)})) dt.
\end{align*} 
Observing the definition of the stochastic process $dR_t^n= R_t^n(1) dt + R_t^n(2) dW_t$ 
at  \eqref{rn}, we deduce by Lemma \ref{lem1} and the negligibility of the terms $ \tilde{a}_t^n$, $ \tilde{b}_t^n$ the decomposition
\[
V_t^n -  \overline{V}_t^n = \Sigma_t \int_0^t \Sigma_s^{-1} 
\left( \left(R_s^n(1) - b'(X_s) R_s^n(2))\right) ds + 
 R_s^n(2) dW_s\right)  + o_{\mathbb P}(n^{-1/2}), 
\]  
where $\Sigma$ has been defined in \eqref{vexplicit}. This finishes the proof of Theorem \ref{th3}. \qed

\subsection{Proof of Theorem \ref{maintheorem}}\label{180522-9}
We will verify conditions 
$(B1)$, $(B2)$ and $(B3)$ for Theorem \ref{20170206-1} 
under $(A)$, $(C1)$, $(C2)$ and $(C3)$. 
Recall that 
\beas 
K(s)=-\Sigma_s^{-1}bb'(X_s), 
\eeas 
$\ell=2k+\overline{q}+8$, $\ell_*=2[\overline{q}/2]+4$ and we are assuming that $a$, $b$ are in $C^\infty(\bbR)$ and 
all their derivatives of positive order are bounded.
As mentioned just before assumption $(A)$, 
the functionals $c_\alpha(z)$ in the representation (\ref{180520-2}) 
of the full random symbol $\sigma$ 
and also in (\ref{20170404-1}) 
are associated with 
$\underline{\sigma}$ of (\ref{Lsigma}) and 
$\overline{\sigma}$ of (\ref{170207-5}). 

Conditions $(B1)$(i), (ii) are obvious. 
Condition $(B1)$(iii) is assumed by $(C1)$. 
In the present situation, $\widehat{F}_n=0$ since $F_n=F$. 
Condition $(B2)$(i) follows from $(A)$ and $(C2)$(i). 
$(B2)$(ii) will be checked later after constructing $s_n$. 
Condition $(B2)$(iii) is already obtained in (\ref{180522-1}). 
The property $(B2)$(iv) has been observed to derive 
the expression (\ref{Lsigma}). 

We shall consider 
non-degeneracy of the Malliavin covariance matrix 
$\sigma_{(\bbX_1,\bbX_2)}$ of $(\bbX_1,\bbX_2)$, where 
\beas 
\bbX_1 &=& 
\big(M^{n,1}_{S_1},...,M^{n,j-1}_{S_{j-1}},M^{n,j}_{S_j})
 \quad\text{and}\quad
\bbX_2 \>=\> \big(\Sigma_{T_1},...,\Sigma_{T_k}\big)
\eeas
for $S_1=T_1,...,S_{j-1}=T_{j-1}$ and $S_j$ is either $s\in[(T_{j-1}+T_j)/2,T_j]$. 
We will estimate the Malliavin covariance matrix $\sigma_{(\bbX_1,\bbX_2)}$. 
%
\begin{en-text}
\beas 
\bbX &=& \bigg(
\big(M_{T_1}-M_{T_0},\Sigma_{T_1}-\Sigma_{T_0}\big),...,
\big(M_{T_{j-1}}-M_{T_{j-2}},\Sigma_{T_{j-1}}-\Sigma_{T_{j-2}}\big), 
\\&&
\big(M_s-M_{T_{j-1}},\Sigma_{T_j}-\Sigma_{T_{j-1}}\big), 
\Sigma_{T_{j+1}}-\Sigma_{T_j},...,\Sigma_{T_k}-\Sigma_{T_{k-1}}\bigg)
\eeas
for $s\in((T_{j-1}+T_j)/2,T_j)$. 
Remark that $\big(M^n_{T_1}-M_{T_0},\Sigma_{T_1}-\Sigma_{T_0}\big)=(M_{T_1},\Sigma_{T_1})$. 
\beas 
\bbX &=& \bigg(\big(M^n_{T_1},\Sigma_{T_1}\big),...,
\big(M^n_{T_{j-1}},\Sigma_{T_{j-1}}\big), 
\big(M^n_s,\Sigma_{T_j}\big), 
\Sigma_{T_{j+1}}...,\Sigma_{T_k}\bigg)
\eeas
for $s\in((T_{j-1}+T_j)/2,T_j)$. 
\end{en-text}
Let $\theta_i=i/n$. 
Let 
\beas 
\eta_i(t) &=& \sqrt{n}\big(W(\theta_i\wedge t)-W(\theta_{i-1}\wedge t)\big)
\eeas
and 
\beas 
\xi_i(t) &=& n\bigg(\big(W(\theta_i\wedge t)-W(\theta_{i-1}\wedge t)\big)^2-\big(\theta_i\wedge t-\theta_{i-1}\wedge t\big)\bigg).
\eeas
Then, as in \cite{Yoshida2012a}, we have 
\beas
D_rM^{n,\mu}_{S_\mu}
&=&
\sum_{i=1}^n2K(\theta_{i-1})\eta_i(S_\mu)
1_{(\theta_{i-1}\wedge S_\mu,\theta_i\wedge S_\mu]}(r)
\\&&
+n^{-1/2}\sum_{i=1}^{n-1}\bigg(\sum_{i'=i+1}^nD_rK(\theta_{i-1})
\xi_{i'}(S_\mu)\bigg)1_{(\theta_{i-1}\wedge S_\mu,\theta_i\wedge S_\mu]}(r)
\eeas
for $\mu=1,...,j$. 
Therefore, 
\beas 
\sigma(n,\mu_1,\mu_2)&:=& \langle DM^n_{S_{\mu_1}},DM^n_{S_{\mu_2}} \rangle_{\mathbb H}
\\&=&
\sum_{i=1}^n\int_{\theta_{i-1}}^{\theta_i}
\bigg[2K(\theta_{i-1})\eta_i(S_{\mu_1})+n^{-1/2}\sum_{i'=i+1}^nD_rK(\theta_{i-1})\xi_{i'}(S_{\mu_1})\bigg]
1_{[0,S_{\mu_1}]}(r)
\\&&
\qquad\quad\times
\bigg[2K(\theta_{i-1})\eta_i(S_{\mu_2})+n^{-1/2}\sum_{i'=i+1}^nD_rK(\theta_{i-1})\xi_{i'}(S_{\mu_2})\bigg]
1_{[0,S_{\mu_2}]}(r)
dr
\\&&
+O_{\mathbb L^p}(n^{-1/2})
\\&=&
\tilde{\sigma}(n,\mu_1,\mu_2)+O_{\mathbb L^p}(n^{-1/2})
\eeas
for $\mu_1,\mu_2=1,...,j$, 
where $\tilde{\sigma}(n,\mu_1,\mu_2)=\tilde{\sigma}_1(n,\mu_1,\mu_2)+\tilde{\sigma}_2(n,\mu_1,\mu_2)$ with 
\beas
\tilde{\sigma}_1(n,\mu_1,\mu_2) 
&=& 
\sum_{i=1}^n\int_{\theta_{i-1}}^{\theta_i}
\big(2K(\theta_{i-1})\eta_i(S_{\mu_1})\big)1_{[0,S_{\mu_1}]}(r)
\big(2K(\theta_{i-1})\eta_i(S_{\mu_2})\big)1_{[0,S_{\mu_2}]}(r)dr
\eeas
and
\beas
\tilde{\sigma}_2(n,\mu_1,\mu_2)
&=&
\sum_{i=1}^n\int_{\theta_{i-1}}^{\theta_i}
\bigg(n^{-1/2}\sum_{i'=i+1}^nD_rK(\theta_{i-1})\xi_{i'}(S_{\mu_1})\bigg)1_{[0,S_{\mu_1}]}(r)
\\&&
\qquad\quad\times
\bigg(n^{-1/2}\sum_{i'=i+1}^nD_rK(\theta_{i-1})\xi_{i'}(S_{\mu_2})\bigg)1_{[0,S_{\mu_2}]}(r)dr
\eeas
for $\mu_1,\mu_2=1,...,j$. 
Moreover,  for $G=(G^\nu)_{\nu=1,...,\overline{q}}$,
\beas 
\sigma(n,\mu,\nu)
&:=&
\langle DM^n_{S_\mu},D G^\nu\rangle_{\mathbb H}
\\&=&
\sum_{i=1}^n\int_{\theta_{i-1}}^{\theta_i}
\bigg[2K(\theta_{i-1})\eta_i(S_{\mu})+n^{-1/2}\sum_{i'=i+1}^nD_rK(\theta_{i-1})\xi_{i'}(S_{\mu})\bigg]
1_{[0,S_{\mu_1}]}(r)
\\&&
\qquad\quad\times D_r G^{\nu} dr+O_{\mathbb L^p}(n^{-1/2})
\\&=&
\tilde{\sigma}(n,\mu,\nu)+O_{\mathbb L^p}(n^{-1/2}),
\eeas
where
\beas 
\tilde{\sigma}(n,\mu,\nu) &=& 
\sum_{i=1}^n\int_{\theta_{i-1}}^{\theta_i}
\bigg(n^{-1/2}\sum_{i'=i+1}^nD_rK(\theta_{i-1})\xi_{i'}(S_{\mu})\bigg)
1_{[0,S_{\mu}]}(r)
D_r G^\nu dr
\eeas
Let 
\beas 
\tilde{\sigma}(n,\nu_1,\nu_2)
&=& 
\int _0^1
D_r G^{\nu_1}D_r G^{\nu_2}dr.
\eeas
Then it is easy to see that the matrix 
\beas
\left[
\begin{array}{cc}
\big(\tilde{\sigma}_2(n,\mu_1,\mu_2)\big)&\big(\tilde{\sigma}(n,\mu,\nu)\big)
\\ 
\big(\tilde{\sigma}(n,\mu,\nu)\big)^\star&\big(\tilde{\sigma}(n,\nu_1,\nu_2)\big)
\end{array}
\right]
\eeas
is nonnegative definite. 
As we will see, the matrix $\big(\tilde{\sigma}(n,\nu_1,\nu_2)\big)$ is 
positive definite almost surely. 
Therefore, 
\beas 
\begin{array}{c}
\big(\tilde{\sigma}_2(n,\mu_1,\mu_2)\big)-
\big(\tilde{\sigma}(n,\mu,\nu)\big)\big(\tilde{\sigma}(n,\nu_1,\nu_2)\big)^{-1}\big(\tilde{\sigma}(n,\mu,\nu)\big)^\star\end{array}
\eeas
is nonnegative definite, and hence 
\beas&&
\det\left[
\begin{array}{cc}
\big(\tilde{\sigma}(n,\mu_1,\mu_2)\big)&\big(\tilde{\sigma}(n,\mu,\nu)\big)
\\ 
\big(\tilde{\sigma}(n,\mu,\nu)\big)^\star&\big(\tilde{\sigma}(n,\nu_1,\nu_2)\big)
\end{array}
\right]
\\&=&
\det\left[
\begin{array}{c}
\big(\tilde{\sigma}(n,\mu_1,\mu_2)\big)-
\big(\tilde{\sigma}(n,\mu,\nu)\big)\big(\tilde{\sigma}(n,\nu_1,\nu_2)\big)^{-1}\big(\tilde{\sigma}(n,\mu,\nu)\big)^\star\end{array}
\right]
\\&&
\times\det \big(\tilde{\sigma}(n,\nu_1,\nu_2)\big)
\\&\geq&
\det\big(\tilde{\sigma}_1(n,\mu_1,\mu_2)\big)
\det \big(\tilde{\sigma}(n,\nu_1,\nu_2)\big)
\>=:\>\calm_n.
\eeas

\begin{en-text}
We consider the quadratic form $Q$ given by 
\beas 
\sum_{\mu_1,\mu_2=1}^j u_{\mu_1}u_{\mu_2} \tilde{\sigma}(n,\mu_1,\mu_2)
+2\sum_{\mu}^j\sum_{\nu=1}^k u_\mu v_\nu \tilde{\sigma}(n,\mu,\nu)
+\sum_{\nu_1,\nu_2=1}^k v_{\nu_1}v_{\nu_2}\tilde{\sigma}(n,\nu_1,\nu_2)
\eeas
for $u_1,...,u_j,v_1,...,v_k\in\bbR$.  
Then it is possible to show $Q\geq0$ for any $u_1,...,u_j,v_1,...,v_k\in\bbR$ 
by the Schwarz inequality and the variance inequality. 
Therefore the kernel matrix of $Q$ is non-negative, and hence $\det Q\geq0$. 
By decomposition of the determinant 
\beas 
\det\left[
\begin{array}{cc}
A_1+A_2 & B\\
C & D
\end{array}
\right]
=
\det\left[
\begin{array}{cc}
A_1& B\\
O & D
\end{array}
\right]
+
\det
\left[
\begin{array}{cc}
A_2 & B\\
C & D
\end{array}
\right],
\eeas
we see 
\beas
\det\left[
\begin{array}{cc}
\big(\tilde{\sigma}(n,\mu_1,\mu_2)\big)&\big(\tilde{\sigma}(n,\mu,\nu)\big)
\\ 
\big(\tilde{\sigma}(n,\mu,\nu)\big)^\star&\big(\tilde{\sigma}(n,\nu_1,\nu_2)\big)
\end{array}
\right]
\eeas
is bounded from below by 
\beas
\calm_n &:=& \det\bigg[ 
\frac{1}{n}\sum_{i=1}^n\bigg[2a(X(\theta_{i-1})\eta_i(\varphi_n(S_{\mu_1}\wedge S_{\mu_2}));\mu_1,\mu_2\bigg]^2
\bigg]
\times 
\det\bigg[\tilde{\sigma}(n,\nu_1,\nu_2)\big);\nu_1,\nu_2\bigg].
\eeas
\end{en-text}
Now $\calm_n$ converges in $\bbL_{\infty-}$ to 
\beas
\calm_\infty &:=& \det\bigg[ 
\int_0^{S_{\mu_1}\wedge S_{\mu_2}}
4K(t)^2dt\bigg]_{\mu_1,\mu_2=1,...,j}
\times 
\det\bigg[\int _0^1
D_r G^{\nu_1}D_r G^{\nu_2}dr\bigg]_{\nu_1,\nu_2=1,...,\overline{q}}
\eeas
with rate $n^{-1/2}$. 
Define $s_n^j$ by 
\beas 
s_n^j := \half \calm_\infty.
\eeas
\begin{en-text}
We see 
\beas
\int _0^1D_r\Sigma_{T_{\nu_1}}D_r\Sigma_{T_{\nu_2}}dr
&=&
\int _0^{T_{\nu_1}\wedge T_{\nu_2}}D_r\Sigma_1D_r\Sigma_1dr\koko
\eeas
\end{en-text}
Then 
$
\sup_{n\in\bbN}\big\|s_n^j\big\|_{\ell,p}
< \infty
$ 
for every $p>1$ and every $j$, so that $(B2)$(ii) holds additionally by $(A)$. 
But $\calm_\infty$ is non-degenerate, i.e.
\bea\label{180522-8}
\calm_\infty^{-1}&\in& \bbL_{\infty-}
\eea 
due to $(C1)$ and $(C3)$. 
This shows $(B3)$(ii). 
Moreover, the estimate $\calm_n-\calm_\infty=O_{\bbL_{\infty-}}(n^{-1/2})$ 
and (\ref{180522-8}) proves $(B3)$(i). Hence,
the proof of Theorem \ref{maintheorem} is completed. 
\qed


\subsection*{Acknowledgment}
Mark Podolskij and Bezirgen Veliyev acknowledge financial support from the project 
``Ambit fields: probabilistic properties and statistical inference'' funded by Villum Fonden (No. 11745)
and from
CREATES funded by the Danish
National Research Foundation. 
The research of Nakahiro Yoshida is supported in part by 
Japan Society for the Promotion of Science Grants-in-Aid for Scientific Research 
No. 17H01702 (Scientific Research), 
No. 26540011 (Challenging Exploratory Research); 
Japan Science and Technology Agency CREST JPMJCR14D7; 
and by a Cooperative Research Program of the Institute of Statistical Mathematics.

\end{document}

\subsection{Proof of Theorem \ref{maintheorem}} 

We will verify the conditions of Theorem \ref{20170206-1} 
for 
\beas 
K(s)=-\Sigma_s^{-1}bb'(X_s).
\eeas 
Let $\ell=2k+q+8$. 
We are assuming that $a,b\in C^\infty_{b,1}$. 
Conditions $(B1)$(i) and (ii) are obvious. 

%
\begin{en-text}
\beas &&
P\bigg[\frac{1}{T_j-s}\int_s^{T_j}\{bb'(X_r)\}^2dr\leq\frac{1}{n^3}\bigg]
\\&\leq&
\sum_{i=1}^n
P\bigg[\frac{1}{T_j-s}\int_{s_{i-1}}^{s_i}\{bb'(X_r)\}^2dr\leq\frac{1}{n^4}\bigg]
\\&\geq&
\frac{c}{T_j-s}\int_s^{T_j}(1\wedge |X_r|^2)dr. 
\eeas
\beas 
\frac{1}{T_j-s}\int_s^{T_j}\{bb'(X_r)\}^2dr
&\geq&
\frac{c}{T_j-s}\int_s^{T_j}(1\wedge |X_r|^m)dr
\\&\geq&
\frac{c}{T_j-s}\int_s^{T_j}(1\wedge |X_r|^2)dr. 
\eeas
\end{en-text}


We shall consider 
non-degeneracy of the Malliavin covariance matrix 
$\sigma_{(\bbX_1,\bbX_2)}$ of $(\bbX_1,\bbX_2)$, where 
\beas 
\bbX_1 &=& 
{\colorr 
\big(M^{n,1}_{S_1},...,M^{n,j-1}_{S_{j-1}},M^{n,j}_{S_j})}
 \quad\text{and}\quad
\bbX_2 \>=\> \big(\Sigma_{T_1},...,\Sigma_{T_k}\big)
\eeas
for $S_1=T_1,...,S_{j-1}=T_{j-1}$ and $S_j$ is either $s\in[(T_{j-1}+T_j)/2,T_j]$. 
We will estimate the Malliavin covariance matrix $\sigma_{(\bbX_1,\bbX_2)}$. 
%
\begin{en-text}
\beas 
\bbX &=& \bigg(
\big(M_{T_1}-M_{T_0},\Sigma_{T_1}-\Sigma_{T_0}\big),...,
\big(M_{T_{j-1}}-M_{T_{j-2}},\Sigma_{T_{j-1}}-\Sigma_{T_{j-2}}\big), 
\\&&
\big(M_s-M_{T_{j-1}},\Sigma_{T_j}-\Sigma_{T_{j-1}}\big), 
\Sigma_{T_{j+1}}-\Sigma_{T_j},...,\Sigma_{T_k}-\Sigma_{T_{k-1}}\bigg)
\eeas
for $s\in((T_{j-1}+T_j)/2,T_j)$. 
Remark that $\big(M^n_{T_1}-M_{T_0},\Sigma_{T_1}-\Sigma_{T_0}\big)=(M_{T_1},\Sigma_{T_1})$. 
\beas 
\bbX &=& \bigg(\big(M^n_{T_1},\Sigma_{T_1}\big),...,
\big(M^n_{T_{j-1}},\Sigma_{T_{j-1}}\big), 
\big(M^n_s,\Sigma_{T_j}\big), 
\Sigma_{T_{j+1}}...,\Sigma_{T_k}\bigg)
\eeas
for $s\in((T_{j-1}+T_j)/2,T_j)$. 
\end{en-text}
Let $\theta_i=i/n$. 
Let 
\beas 
\eta_i(t) &=& \sqrt{n}\big(W(\theta_i\wedge t)-W(\theta_{i-1}\wedge t)\big)
\eeas
and 
\beas 
\xi_i(t) &=& n\bigg(\big(W(\theta_i\wedge t)-W(\theta_{i-1}\wedge t)\big)^2-\big(\theta_i\wedge t-\theta_{i-1}\wedge t\big)\bigg).
\eeas
Then, as in \cite{Yoshida2012a}, we have 
{\colorr 
\beas
D_rM^{n,\mu}_{S_\mu}
&=&
\sum_{i=1}^n2K(\theta_{i-1})\eta_i(S_\mu)
1_{(\theta_{i-1}\wedge S_\mu,\theta_i\wedge S_\mu]}(r)
\\&&
+n^{-1/2}\sum_{i=1}^{n-1}\bigg(\sum_{i'=i+1}^nD_rK(\theta_{i-1})
\xi_{i'}(S_\mu)\bigg)1_{(\theta_{i-1}\wedge S_\mu,\theta_i\wedge S_\mu]}(r)
\eeas
for $\mu=1,...,j$. 
Therefore, 
\beas 
\sigma(n,\mu_1,\mu_2)&:=& \langle DM^n_{S_{\mu_1}},DM^n_{S_{\mu_2}} \rangle_{\mathbb H}
\\&=&
\sum_{i=1}^n\int_{\theta_{i-1}}^{\theta_i}
\bigg[2K(\theta_{i-1})\eta_i(S_{\mu_1})+n^{-1/2}\sum_{i'=i+1}^nD_rK(\theta_{i-1})\xi_{i'}(S_{\mu_1})\bigg]
1_{[0,S_{\mu_1}]}(r)
\\&&
\qquad\quad\times
\bigg[2K(\theta_{i-1})\eta_i(S_{\mu_2})+n^{-1/2}\sum_{i'=i+1}^nD_rK(\theta_{i-1})\xi_{i'}(S_{\mu_2})\bigg]
1_{[0,S_{\mu_2}]}(r)
dr
\\&&
+O_{\mathbb L^p}(n^{-1/2})
\\&=&
\tilde{\sigma}(n,\mu_1,\mu_2)+O_{\mathbb L^p}(n^{-1/2})
\eeas
for $\mu_1,\mu_2=1,...,j$, 
where $\tilde{\sigma}(n,\mu_1,\mu_2)=\tilde{\sigma}_1(n,\mu_1,\mu_2)+\tilde{\sigma}_2(n,\mu_1,\mu_2)$ with 
\beas
\tilde{\sigma}_1(n,\mu_1,\mu_2) 
&=& 
\sum_{i=1}^n\int_{\theta_{i-1}}^{\theta_i}
\big(2K(\theta_{i-1})\eta_i(S_{\mu_1})\big)1_{[0,S_{\mu_1}]}(r)
\big(2K(\theta_{i-1})\eta_i(S_{\mu_2})\big)1_{[0,S_{\mu_2}]}(r)dr
\eeas
and
\beas
\tilde{\sigma}_2(n,\mu_1,\mu_2)
&=&
\sum_{i=1}^n\int_{\theta_{i-1}}^{\theta_i}
\bigg(n^{-1/2}\sum_{i'=i+1}^nD_rK(\theta_{i-1})\xi_{i'}(S_{\mu_1})\bigg)1_{[0,S_{\mu_1}]}(r)
\\&&
\qquad\quad\times
\bigg(n^{-1/2}\sum_{i'=i+1}^nD_rK(\theta_{i-1})\xi_{i'}(S_{\mu_2})\bigg)1_{[0,S_{\mu_2}]}(r)dr
\eeas
for $\mu_1,\mu_2=1,...,j$. 
Moreover,  
\beas 
\sigma(n,\mu,\nu)
&:=&
\langle DM^n_{S_\mu},D\Sigma_{T_\nu}\rangle_{\mathbb H}
\\&=&
\sum_{i=1}^n\int_{\theta_{i-1}}^{\theta_i}
\bigg[2K(\theta_{i-1})\eta_i(S_{\mu})+n^{-1/2}\sum_{i'=i+1}^nD_rK(\theta_{i-1})\xi_{i'}(S_{\mu})\bigg]
1_{[0,S_{\mu_1}]}(r)
\\&&
\qquad\quad\times D_r\Sigma_{T_\nu}dr+O_{\mathbb L^p}(n^{-1/2})
\\&=&
\tilde{\sigma}(n,\mu,\nu)+O_{\mathbb L^p}(n^{-1/2}),
\eeas
where
\beas 
\tilde{\sigma}(n,\mu,\nu) &=& 
\sum_{i=1}^n\int_{\theta_{i-1}}^{\theta_i}
\bigg(n^{-1/2}\sum_{i'=i+1}^nD_rK(\theta_{i-1})\xi_{i'}(S_{\mu})\bigg)
1_{[0,S_{\mu}]}(r)
D_r\Sigma_{T_\nu}dr
\eeas
Let 
\beas 
\tilde{\sigma}(n,\nu_1,\nu_2)
&=& 
\int _0^1
D_r\Sigma_{T_{\nu_1}}D_r\Sigma_{T_{\nu_2}}dr.
\eeas
Then it is easy to see that the matrix 
\beas
\left[
\begin{array}{cc}
\big(\tilde{\sigma}_2(n,\mu_1,\mu_2)\big)&\big(\tilde{\sigma}(n,\mu,\nu)\big)
\\ 
\big(\tilde{\sigma}(n,\mu,\nu)\big)^\star&\big(\tilde{\sigma}(n,\nu_1,\nu_2)\big)
\end{array}
\right]
\eeas
is nonnegative definite. 
As we will see, the matrix $\big(\tilde{\sigma}(n,\nu_1,\nu_2)\big)$ is 
positive definite a.s. 
Therefore, 
\beas 
\begin{array}{c}
\big(\tilde{\sigma}_2(n,\mu_1,\mu_2)\big)-
\big(\tilde{\sigma}(n,\mu,\nu)\big)\big(\tilde{\sigma}(n,\nu_1,\nu_2)\big)^{-1}\big(\tilde{\sigma}(n,\mu,\nu)\big)^\star\end{array}
\eeas
is nonnegative definite, and hence 
\beas&&
\det\left[
\begin{array}{cc}
\big(\tilde{\sigma}(n,\mu_1,\mu_2)\big)&\big(\tilde{\sigma}(n,\mu,\nu)\big)
\\ 
\big(\tilde{\sigma}(n,\mu,\nu)\big)^\star&\big(\tilde{\sigma}(n,\nu_1,\nu_2)\big)
\end{array}
\right]
\\&=&
\det\left[
\begin{array}{c}
\big(\tilde{\sigma}(n,\mu_1,\mu_2)\big)-
\big(\tilde{\sigma}(n,\mu,\nu)\big)\big(\tilde{\sigma}(n,\nu_1,\nu_2)\big)^{-1}\big(\tilde{\sigma}(n,\mu,\nu)\big)^\star\end{array}
\right]
\\&&
\times\det \big(\tilde{\sigma}(n,\nu_1,\nu_2)\big)
\\&\geq&
\det\big(\tilde{\sigma}_1(n,\mu_1,\mu_2)\big)
\det \big(\tilde{\sigma}(n,\nu_1,\nu_2)\big)
\>=:\>\calm_n.
\eeas
}
\begin{en-text}
We consider the quadratic form $Q$ given by 
\beas 
\sum_{\mu_1,\mu_2=1}^j u_{\mu_1}u_{\mu_2} \tilde{\sigma}(n,\mu_1,\mu_2)
+2\sum_{\mu}^j\sum_{\nu=1}^k u_\mu v_\nu \tilde{\sigma}(n,\mu,\nu)
+\sum_{\nu_1,\nu_2=1}^k v_{\nu_1}v_{\nu_2}\tilde{\sigma}(n,\nu_1,\nu_2)
\eeas
for $u_1,...,u_j,v_1,...,v_k\in\bbR$.  
Then it is possible to show $Q\geq0$ for any $u_1,...,u_j,v_1,...,v_k\in\bbR$ 
by the Schwarz inequality and the variance inequality. 
Therefore the kernel matrix of $Q$ is non-negative, and hence $\det Q\geq0$. 
By decomposition of the determinant 
\beas 
\det\left[
\begin{array}{cc}
A_1+A_2 & B\\
C & D
\end{array}
\right]
=
\det\left[
\begin{array}{cc}
A_1& B\\
O & D
\end{array}
\right]
+
\det
\left[
\begin{array}{cc}
A_2 & B\\
C & D
\end{array}
\right],
\eeas
we see 
\beas
\det\left[
\begin{array}{cc}
\big(\tilde{\sigma}(n,\mu_1,\mu_2)\big)&\big(\tilde{\sigma}(n,\mu,\nu)\big)
\\ 
\big(\tilde{\sigma}(n,\mu,\nu)\big)^\star&\big(\tilde{\sigma}(n,\nu_1,\nu_2)\big)
\end{array}
\right]
\eeas
is bounded from below by 
\beas
\calm_n &:=& \det\bigg[ 
\frac{1}{n}\sum_{i=1}^n\bigg[2a(X(\theta_{i-1})\eta_i(\varphi_n(S_{\mu_1}\wedge S_{\mu_2}));\mu_1,\mu_2\bigg]^2
\bigg]
\times 
\det\bigg[\tilde{\sigma}(n,\nu_1,\nu_2)\big);\nu_1,\nu_2\bigg].
\eeas
\end{en-text}
Now $\calm_n$ converges in $\bbL_{\infty-}$ to 
\beas
\calm_\infty &:=& \det\bigg[ 
\int_0^{S_{\mu_1}\wedge S_{\mu_2}}
4{\colorr K(t)}^2dt;\mu_1,\mu_2\bigg]
\times 
\det\bigg[\int _0^1
D_r\Sigma_{T_{\nu_1}}D_r\Sigma_{T_{\nu_2}}dr;\nu_1,\nu_2\bigg]
\eeas
with rate $n^{-1/2}$. 
\begin{en-text}
We see 
\beas
\int _0^1D_r\Sigma_{T_{\nu_1}}D_r\Sigma_{T_{\nu_2}}dr
&=&
\int _0^{T_{\nu_1}\wedge T_{\nu_2}}D_r\Sigma_1D_r\Sigma_1dr\koko
\eeas
\end{en-text}
We have 
\beas 
D_r\Sigma_t &=& \Sigma_t\Sigma_r^{-1}\Sigma_r b'(X_r)=\Sigma_t b'(X_r).\koko
\eeas
Then after some linear transforms, 
the non-degeneracy of $\calm_\infty$ is reduced to 
that of 
\beas
\tilde{\calm}_\infty &:=& 
\prod_{i=1}^j
\int_{S_{i-1}}^{S_i}
4{\colorr K(t)}^2dt
\times 
{\colorr 
\prod_{m=1}^k
\int _{T_{m-1}}^{T_m}
b'(X_r)^2dr
\times
\prod_{\mu=1}^k\Sigma_{T_\mu}
}
\eeas
But the non-degeneracy of $\tilde{\calm}_\infty$ is proved as follows. 
$\Sigma_{T_{\nu}}$ has 
nondegeneracy. 
{\colorr 
By using $[C]$, we have 
\beas &&
P\bigg[\int_{T_{m-1}}^{T_{m-1}+n^{-\alpha}}b'(X_r)^2dt < n^{-1}\bigg]
\\&=&
E\bigg[P\bigg[\int_{T_{m-1}}^{T_{m-1}+n^{-\alpha}}b'(X_r)^2dt < n^{-1}\bigg|X_{T_{m-1}}\bigg]
1_B(X_{T_{m-1}})\bigg]
\\&&
+P\bigg[\int_{T_{m-1}}^{T_{m-1}+n^{-\alpha}}b'(X_r)^2dt < n^{-\alpha},\>X_{T_{m-1}}\in B^c\bigg]
\\&\leq&
c^{-1}\exp(-cn^{-c})\qquad(n\in\bbN)
\eeas
for some positive constants $\alpha$ and $c$, 
as the proof of (\ref{20170129-1}). Therefore 
\beas 
\bigg(\int_{T_{m-1}}^{T_m}b'(X_r)^2dr\bigg)^{-1} &\in& \bbL_{\infty-}. 
\eeas
}

Thus, we have proved that 
\beas 
\det\sigma_{\bbX} &\to& v_\infty
\eeas
in $\bbL_{\infty-}$ as $n\to\infty$ with rate $n^{-1/2}$ 
for some positive random variable $v_\infty$ 
such that $v_\infty^{-1}\in \bbL_{\infty-}$. 

\begin{en-text}
Then 
\beas 
\det\sigma_\bbX &=^{(incorrect)}& 
\prod_{j'\leq j-1}
\det\bigg(\sigma_{(M^n_{T_{j'}},\Sigma_{T_{j'}})}-\sigma_{(M^n_{T_{j'-1}},\Sigma_{T_{j'-1}})}\bigg)
\\&&
\times
\det\bigg(\sigma_{(M^n_s,\Sigma_{T_{j-1}})}-\sigma_{(M^n_{T_{j-1}},\Sigma_{T_{j-1}})}\bigg)
\\&&
\times
\prod_{j'\geq j+1}
\det\bigg(\sigma_{\Sigma_{T_{j'}}}-\sigma_{\Sigma_{T_{j'-1}}}\bigg).
\eeas
{\colr Task: asymptotic non-degeneracy of $\bbX$. 
Orthogonality between $M^n$ and $\Sigma$ should be proved first. Apply the method of Y2013. }
\end{en-text}

Now 
$s^j_n$ for truncation 
can be constructed with $v_\infty$ 
independently of $s\in((T_{j-1}+T_j)/2,T_j]$, so that (B3) (i) and (ii) 
as well as (B2) (ii) concerning $s_n$ hold. 

We can verify Condition (B2) (i) and (ii), and (iii) and (iv) by Section \ref{MultiEdgeworth}. 
Applying Theorem \ref{20170206-1}, we complete the proof of Theorem \ref{maintheorem}.